\documentclass{amsart}

\usepackage{url,amssymb,enumerate,colonequals}

\usepackage{geometry}
\usepackage{mathrsfs} 
\usepackage[section]{placeins}
\usepackage{MnSymbol}
\usepackage{extarrows}
\usepackage{lscape}
\usepackage{circledsteps}
\usepackage[all,cmtip]{xy}
\usepackage{caption}
\usepackage[OT2,T1]{fontenc}
\usepackage{color}
\usepackage{tikzit}

\tikzstyle{white with black border}=[fill=white, draw=black, shape=circle]
\tikzstyle{black with white font}=[white, fill=black, draw=black, shape=circle, font={\small}, inner sep=0pt]
\tikzstyle{black simple style}=[fill=black, draw=black, shape=circle]
\tikzstyle{A7 node}=[fill={rgb,255: red,220; green,47; blue,2}, draw=black, shape=circle]
\tikzstyle{D5 node}=[fill={rgb,255: red,212; green,73; blue,6}, draw=black, shape=circle]
\tikzstyle{D4 node}=[fill={rgb,255: red,228; green,107; blue,6}, draw=black, shape=circle]
\tikzstyle{A3 node}=[fill={rgb,255: red,250; green,163; blue,7}, draw=black, shape=circle]
\tikzstyle{A2 node}=[fill={rgb,255: red,255; green,186; blue,8}, draw=black, shape=circle]
\tikzstyle{A1 node}=[fill={rgb,255: red,244; green,228; blue,9}, draw=black, shape=circle]
\tikzstyle{A1 node}=[fill={rgb,255: red,244; green,228; blue,9}, draw=black, shape=circle]
\tikzstyle{fatpoint}=[circle, draw=black, fill=black, inner sep=0pt, minimum size=4mm]
\tikzstyle{pointlabel}=[white, font=\small, anchor=center]

\tikzstyle{arrow}=[->]
\tikzstyle{new edge style 0}=[-, fill=none]
\tikzstyle{A7 edge}=[-, fill={rgb,255: red,220; green,47; blue,2}, draw={rgb,255: red,220; green,47; blue,2}]
\tikzstyle{D5 edge}=[-, fill={rgb,255: red,212; green,73; blue,6}, draw={rgb,255: red,212; green,73; blue,6}]
\tikzstyle{D4 edge}=[-, fill={rgb,255: red,228; green,107; blue,6}, draw={rgb,255: red,228; green,107; blue,6}]
\tikzstyle{A3 edge}=[-, fill={rgb,255: red,250; green,163; blue,7}, draw={rgb,255: red,250; green,163; blue,7}]
\tikzstyle{A2 edge}=[-, fill={rgb,255: red,255; green,186; blue,8}, draw={rgb,255: red,255; green,186; blue,8}]
\tikzstyle{dashed arrow}=[->, dashed]
\tikzstyle{A1 edge}=[-, fill={rgb,255: red,244; green,228; blue,9}, draw={rgb,255: red,244; green,228; blue,9}]
\tikzstyle{A3 edge white}=[-, fill={rgb,255: red,250; green,163; blue,7}, draw=white]

\usepackage{tikz-cd}
\usepackage{resizegather}
\usepackage[
        colorlinks=true, citecolor=darkestred, linkcolor=darkred, urlcolor=darkred,
        backref=page,
        pdfauthor={Alvaro Gonzalez Hernandez},
]{hyperref}

\usepackage{comment}
\usepackage{multirow}
\usepackage{mathdots}
\usepackage{amssymb}
\usepackage{amsfonts}
\usepackage{amsmath}
\usepackage[nobysame,alphabetic]{amsrefs}

\usepackage{holtpolt}
\usepackage{xcolor}
\usepackage{adjustbox}
\usepackage{colortbl}
\usepackage{multicol}
\usepackage{appendix}
\usepackage{graphicx}
\usepackage{wrapfig}
\usepackage{stackrel}
\usepackage{changepage} 
\usepackage{listings}
\usepackage{xcolor}
\usepackage{float}
\usepackage{tikz-cd} 
\usepackage{pgfplots}
\usepackage{orcidlink}

\newcommand{\bbH}{\mathbb{H}}

\newcommand{\Q}{\mathbb{Q}}

\newcommand{\Z}{\mathbb{Z}}

\newcommand{\calO}{\mathcal{O}}

\newcommand{\FF}{\mathbb{F}}

\DeclareMathOperator{\coker}{coker}

\DeclareMathOperator{\End}{End}

\DeclareMathOperator{\Kum}{Kum}

\DeclareMathOperator{\rank}{rank}

\DeclareMathOperator{\rig}{rig}

\DeclareMathOperator{\Spec}{Spec}

\newcommand{\GL}{\operatorname{GL}}

\newcommand{\SL}{\operatorname{SL}}
\newcommand{\Stab}{\operatorname{Stab}}
\newcommand{\ESL}{\operatorname{ESL}}
\newcommand{\Ktwo}{\Kum(E\times E)}
\newcommand{\Kthree}{(E\times E)/C_3}
\newcommand{\Kfour}{(E\times E)/C_4}
\newcommand{\Ksix}{(E\times E)/C_6}

\numberwithin{equation}{section}

\theoremstyle{plain}
\newtheorem{theorem}{Theorem}[section]
\newtheorem{lemma}[theorem]{Lemma}

\newtheorem{corollary}[theorem]{Corollary}
\newtheorem{proposition}[theorem]{Proposition}

\theoremstyle{definition}
\newtheorem{definition}[theorem]{Definition}
\newtheorem{question}[theorem]{Question}

\theoremstyle{remark}
\newtheorem{remark}[theorem]{Remark}

\newtheorem*{acknowledgements}{Acknowledgements}

\definecolor{darkestblue}{HTML}{03045E}
\definecolor{darkblue}{HTML}{0077B6}
\definecolor{darkestred}{HTML}{c1121f}
\definecolor{darkred}{HTML}{c1121f}
\definecolor{lighterblue}{HTML}{00B4D8}
\definecolor{warwickpurple}{HTML}{440379}
\definecolor{warwickpurple2}{RGB}{128, 35, 173}

\DeclareRobustCommand{\SkipTocEntry}[5]{}

\makeatletter
\newcommand{\xdashrightarrow}[2][]{\ext@arrow 0359\rightarrowfill@@{#1}{#2}}
\newcommand{\xdashleftarrow}[2][]{\ext@arrow 3095\leftarrowfill@@{#1}{#2}}
\newcommand{\xdashleftrightarrow}[2][]{\ext@arrow 3359\leftrightarrowfill@@{#1}{#2}}
\def\rightarrowfill@@{\arrowfill@@\relax\relbar\rightarrow}
\def\leftarrowfill@@{\arrowfill@@\leftarrow\relbar\relax}
\def\leftrightarrowfill@@{\arrowfill@@\leftarrow\relbar\rightarrow}
\def\arrowfill@@#1#2#3#4{%
  $\m@th\thickmuskip0mu\medmuskip\thickmuskip\thinmuskip\thickmuskip
   \relax#4#1
   \xleaders\hbox{$#4#2$}\hfill
   #3$%
}
\makeatother
\pgfplotsset{compat=1.18}
\begin{document}

\begin{abstract}
Building on earlier work of Katsura and Rybakov, we complete the classification, in every characteristic, of all groups 
$G$ acting on an abelian surface $A$ by automorphisms preserving the group law such that the resolution of the quotient $A/G$ is a K3 surface. In order to do so, we study actions of groups with $p\mid|G|$ in characteristics $p=2,3$ and $5$.
Using the theory of rational double points in positive characteristic, we show that the possible ADE singularity types of
$A/G$ are constrained by the requirement that their local fundamental group contains
$G$ as a subgroup, and we determine the singular locus of $A/G$ via the action of $G$
on the $\ell$-adic Tate module of $A$. As a key step in the classification, we
prove that if $A$ is a supersingular abelian surface and $p\mid|G|$, then $A/G$ can
never be a generalised Kummer surface.
Finally, we construct explicit examples of such generalised Kummer surfaces as quotients of products of two elliptic curves.
\end{abstract}

\title{The classification of generalised Kummer surfaces in positive characteristic}

\author{Alvaro Gonzalez-Hernandez\, \orcidlink{0009-0001-4537-807X}}

\address{Mathematics Institute\\
    University of Warwick\\
    CV4 7AL \\
    United Kingdom\\}

\email{
\href{mailto:alvaro.gonzalez-hernandez@warwick.ac.uk}{alvaro.gonzalez-hernandez@warwick.ac.uk} 
}

\thanks{The author is financially supported by the Warwick Mathematics Institute. Website: \url{https://alvarogohe.github.io}}
\keywords{Generalised Kummer surfaces, abelian surfaces, positive characteristic, K3 surfaces}
\subjclass[2020]{14G17, 14J28, 14K15 (Primary),  11G10, 11G25, 11G20 (Secondary)}
\maketitle

\section{Introduction}

One of the most important examples of K3 surfaces are Kummer surfaces, which were first described in the nineteenth century by mathematicians such as Hamilton, Cayley and Kummer, while studying the remarkable geometry of quartic surfaces in $\mathbb{P}^3$ with sixteen singularities \cite{Dolgachev2020KummerStudy}. Thanks to the work of Göpel and Borchardt, we now know that these surfaces arise as the quotients of abelian surfaces by the group $C_2$ acting on the surface as the involution that sends every point to its inverse with respect to the group law \cite{Gopel1847TheoriaeLevis}. Kummer surfaces have since become a class of K3 surfaces that we understand well due to the fact that they are finite covers of abelian surfaces, and much of their geometry can be analysed from an arithmetic perspective \cite{Cassels1996Prolegomena2}.\\

The search of other families of K3 surfaces with similarly nice properties naturally leads to the following question:

\begin{question} \label{question}
Are there any other finite groups acting on abelian surfaces such that the desingularisations of the corresponding quotients are K3 surfaces?
\end{question}

Motivated by this question, we have the following definition:

\begin{definition}
Let $A$ be an abelian surface and $G$ a finite group acting on $A$. We say that the quotient $A/G$ is a \textbf{generalised Kummer surface} if the minimal resolution of singularities of $A/G$ is a K3 surface. 
\end{definition}

In general, the desingularisation of the quotient of an abelian surface by the action of a finite group is not a K3 surface. For example, the quotient of an abelian surface by a translation by a torsion point is another abelian surface. Furthermore, the problem also depends on the characteristic of the field over which the surface is defined. For example, in characteristic two, the desingularisation of the quotient of a supersingular abelian surface by the previously described action of $C_2$ is not a K3 surface.\\

In 1987, Katsura characterised which conditions have to be met for $A/G$ to be a generalised Kummer surface (Theorem \ref{Katsura_theorem}, \cite{Katsura1987GeneralizedP}). In particular, one of them is that the action of $G$ must have at least one fixed point. If we translate this point to be the origin, a natural assumption is that the elements of $G$ act as endomorphisms of the abelian surface, which is usually referred to as saying that $\boldsymbol{G}$ \textbf{preserves the group law}.\\

Under this assumption, in the same paper, Katsura classified all possible groups $G$ acting on an abelian surface $A$ such that $A/G$ is a generalised Kummer surface in characteristic zero, and described some additional cases in positive characteristic. In a more recent paper, Rybakov translated these conditions in terms of the action of $G$ on the rational Tate module of $A$ to extend the classification to all fields of characteristic $p$ and all groups $G$ whose order is not divisible by $p$ \cite{Rybakov2024GeneralizedFields}.\\

This paper aims to complete the classification by studying the remaining cases, which are the quotients in characteristic $p$ where $p=2,3$ and $5$, and $p$ divides the order of $G$. The outcome is the following theorem:

\begin{theorem} \label{main_theorem}
Let $A$ be an abelian surface defined over an algebraically closed field $k$ of characteristic $p \geq 0$, and let $G$ be a finite group acting on $A$ and preserving the group law. Then, the quotient $A/G$ is a generalised Kummer surface if and only if $G$, $p$ and $A$ satisfy the conditions listed in the first two columns of the table below, and the action of $G$ on $A$ is as in Theorem \ref{Katsura_theorem}.

\begin{adjustbox}{width=\textwidth}
\begin{minipage}{\textwidth}
\begin{table}[H] 
\centering
\begin{tabular}{|c|cc|c|}
\hline 
\rowcolor[HTML]{FFFFFF}
$G$                    & \multicolumn{2}{c|}{Conditions}                                              & Singularities of $A/G$         \\ \hline
\multirow{3}{*}{$C_2$} & \multicolumn{2}{c|}{$p\neq2$}                                                & $16A_1$                \\ \cline{2-4} 
                       & \multicolumn{1}{c|}{\multirow{2}{*}{$p=2$}}    & $f(A)=2$         & $4 D_4^1$              \\ \cline{3-4} 
                       & \multicolumn{1}{c|}{}                          & $f(A)=1$         & $2 D_8^2$              \\ \hline
\multirow{2}{*}{$C_3$} & \multicolumn{2}{c|}{$p\neq3$}                                                & $9A_2$                 \\ \cline{2-4} 
                       & \multicolumn{1}{c|}{$p=3$}                     & $f(A)=2$     & $3E_6^1$               \\ \hline
\multirow{2}{*}{$C_4$} & \multicolumn{2}{c|}{$p\neq2$}                                                & $4A_3+6A_1$            \\ \cline{2-4} 
                       & \multicolumn{1}{c|}{$p=2$}                     & $f(A)=2$       & $2E_7^3+D_4^1$          \\ \hline
$C_5$                  & \multicolumn{1}{c|}{$p\equiv \pm 2\mod 5$}     & $f(A)=0$ & $5A_4$                 \\ \hline

\multirow{3}{*}{$C_6$} & \multicolumn{2}{c|}{$p\neq2,3$}                                              & $A_5+4A_2+5A_1$        \\ \cline{2-4} 
                       & \multicolumn{1}{c|}{$p=2$}                     & $f(A)=2$       & $E_6^1+D_4^1+4 A_2$    \\ \cline{2-4} 
                       & \multicolumn{1}{c|}{$p=3$}                     & $f(A)=2$       & $E_7^1+E_6^1+5A_1$     \\ \hline
$C_8$                  & \multicolumn{1}{c|}{$p\equiv \pm 3\mod 8$}     & $f(A)=0$ & $2A_7+A_3+3A_1$        \\ \hline
$C_{10}$               & \multicolumn{1}{c|}{$p\equiv \pm 2\mod 5$}     & $f(A)=0$ & $A_9+2A_4+3A_1$        \\ \hline
$C_{12}$               & \multicolumn{1}{c|}{$p\equiv \pm 5\mod 12$}    & $f(A)=0$ & $A_{11}+A_3+2A_2+2A_1$ \\ \hline
\multirow{2}{*}{$Q_8$} & \multicolumn{1}{c|}{\multirow{2}{*}{$p\neq2$}} & $\#Q_8\text{-fixed points}=2$                          & $2D_4+3A_3+2A_1$       \\ \cline{3-4} 
                       & \multicolumn{1}{c|}{}                          & $\#Q_8\text{-fixed points}=4$                              & $4D_4+3A_1$            \\ \hline
\multirow{2}{*}{$Q_{12}$}               & \multicolumn{2}{c|}{$p\neq2,3$}                                              & $D_5+3A_3+2A_2+A_1$    \\ \cline{2-4}
 & \multicolumn{1}{c|}{$p=2$}                     & $f(A)=2$       & $E_8^4+E_7^3+2A_2$          \\ \hline
$Q_{16}$               & \multicolumn{1}{c|}{$p\equiv \pm 3\mod 8$}     & $f(A)=0$ & $2D_6+D_4+A_3+A_1$     \\ \hline
$Q_{20}$               & \multicolumn{1}{c|}{$p\equiv \pm 2\mod 5$}     & $f(A)=0$ & $D_7+A_4+3A_3$         \\ \hline
$Q_{24}$               & \multicolumn{1}{c|}{$p\equiv \pm 5\mod12$}     & $f(A)=0$ & $D_8+D_4+2A_3+A_2$     \\ \hline
\multirow{2}{*}{$\SL_2(\FF_3)$}         & \multicolumn{2}{c|}{$p\neq2,3$}                                              & $E_6+D_4+4A_2+A_1$     \\ 
\cline{2-4} 
                       & \multicolumn{1}{c|}{$p=3$}                     & $f(A)=2$       & $E_8^2+E_6^1+D_4+A_1$               \\ \hline
$\ESL_2(\FF_3)$        & \multicolumn{1}{c|}{$p\equiv \pm 3\mod 8$}     & $f(A)=0$ & $E_7+D_6+A_3+2A_2$     \\ \hline
$\SL_2(\FF_5)$         & \multicolumn{1}{c|}{$p\equiv \pm 2\mod 5$}     & $f(A)=0$ & $E_8+D_4+A_4+2A_2$     \\ \hline
\end{tabular}
\caption{The classification of generalised Kummer surfaces in positive characteristic.}
\label{main_table}
\end{table}
\end{minipage}
\end{adjustbox}
\end{theorem}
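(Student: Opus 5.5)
The plan is to split according to whether $p \nmid |G|$ or $p \mid |G|$. When $p\nmid|G|$ (including $p=0$), the statement is exactly the combination of Katsura's characteristic-zero classification and Rybakov's extension via the action of $G$ on the rational Tate module. These give the rows with conditions $p\neq 2$, $p\neq 3$, etc., and the rows with congruence conditions on $p$ and $f(A)=0$. So the new content is the case $p\in\{2,3,5\}$ with $p\mid |G|$.

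For that case I would first use Katsura's criterion (Theorem \ref{Katsura_theorem}): $A/G$ is a generalised Kummer surface iff $G$ acts with only isolated fixed points, the quotient has only rational double points, and $G$ acts trivially on $H^0(A,\Omega^2)$. Since $G$ preserves the group law, it embeds in $\mathrm{Aut}(A)\subset \End(A)^\times$ and acts symplectically. The candidate groups are then the finite subgroups of $\SL_2$ over a quaternion or imaginary quadratic order appearing in Katsura's list. Among these I keep those with $p\mid |G|$ for $p=2,3,5$.

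The key step is to show that $A$ cannot be supersingular. Suppose $f(A)=0$ and $p\mid|G|$. Take an element $g$ of order $p$. Near a fixed point, a wild $C_p$-action on $\widehat{\mathcal O}_{A,0}$ whose quotient is a rational double point would have to appear as a subgroup of the local fundamental group of that RDP. Artin's tables list these local fundamental groups and the corresponding $E_n^r, D_n^r$ types. I would compare them with the action of $g$ on the formal group of $A$. For supersingular $A$, the formal group is connected of height $4$, and $g$ acts unipotently on $\mathrm{Lie}(A)$; I expect this to force either non-isolated fixed loci (a fixed curve through Frobenius-kernel behaviour) or a non-rational singularity. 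Concretely, I would try to show that $g-1$ is divisible by a uniformiser of the local quaternion order at $p$, so that $\ker(g-1)$ contains $\alpha_p$-type subgroup schemes. Then the fixed scheme is non-reduced of positive dimension, and the quotient singularity is elliptic rather than rational. For $p=5$ this excludes everything, since $5\mid|G|$ forces $C_5$, $C_{10}$, $Q_{20}$ or $\SL_2(\FF_5)$, and for these $A$ must be supersingular. This step is where I expect the main difficulty, since it needs fine control of the wild local action.

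Once supersingularity is excluded, we have $f(A)\in\{1,2\}$. An element of order $3$ or $4$ acting on an ordinary $A$ forces $A$ to be isogenous to $E\times E$ with $E$ having CM by $\Z[\omega]$ or $\Z[i]$, with $p$ split in the CM field. For $f(A)=1$, only $C_2$ survives, since larger $p$-groups cannot act with isolated fixed points on the étale part. Next I would determine the fixed points of each $h\in G$ via $\#A^h=\deg(1-h)=\det(1-h\mid V_\ell A)$. Counting $G$-orbits of points with each stabiliser then gives the number and location of singular points. Each stabiliser $H$ determines the singularity type: it is the unique RDP whose local fundamental group contains $H$ compatibly with $p$, read from Artin's classification (e.g. $C_2$ at $p=2$ gives $D_4^1$ or $D_8^2$, $C_3$ at $p=3$ gives $E_6^1$, $\SL_2(\FF_3)$ at $p=3$ gives $E_8^2$). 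Groups such as $Q_8$ in characteristic $2$ are ruled out because no RDP has such a local group. For consistency, the exceptional-curve rank must equal $22-\rho$-compatible bounds, i.e. at most $21$ summed Milnor-type contributions, and the Euler characteristic must match that of a K3 surface.

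Finally, I would establish existence by explicit constructions on $E\times E$ with $E$ ordinary and of $j=0$ or $1728$. I would realise each group through matrices in $\GL_2(\End E)$ and confirm the singularities listed in the table.
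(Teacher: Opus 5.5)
Your skeleton matches the paper's. You use Katsura--Rybakov when $p\nmid|G|$. For $p\mid|G|$ you exclude supersingular $A$, then use $T_p(A)$, fixed-point counts and Artin's local fundamental groups when $f(A)>0$, and finally build examples. However, both load-bearing steps fail as proposed.

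\textbf{The supersingular exclusion.} This step has no working argument, and the mechanism you sketch is wrong. Rigidity makes $g-1$ an isogeny, so the fixed scheme $\ker(g-1)$ is finite, never positive-dimensional. Moreover, fixed schemes that are non-reduced, or even contain $\alpha_p$, occur where the quotient is fine. At $p=2$ with $A=E_1\times E_2$, $E_1$ ordinary and $E_2$ supersingular, $\ker(1-\iota)=A[2]\supseteq\alpha_2$, yet $A/\langle\iota\rangle$ has $2D_8^2$.

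The missing idea is a global count, which the paper assembles as follows.
\begin{itemize}
\item Reduce to $G=C_p$. It is a subgroup of every surviving group, so $A\to A/G$ factors through $A/C_p$. If $A/C_p$ were not a generalised Kummer surface, it would have a non-RDP and hence be rational by Proposition~\ref{ellsing_prop}. A rational surface cannot separably dominate a K3 surface.
\item For $p=3,5$ and $g$ of order $p$, rigidity gives $\Phi_p(g)=0$ in $\End^0(A)$. So a fixed point satisfies $[p]P=\Phi_p(g)P=O$, i.e.\ $P\in A[p](\bar k)=\{O\}$. Hence $A/C_p$ has a single singular point.
\item By Lemma~\ref{prop_fund} and Artin's tables, that point is $E_6^1$, $E_7^1$, $E_8^2$ ($p=3$) or $E_8^1$ ($p=5$), so it has at most $8$ exceptional curves.
\item If the resolution $X$ of $A/C_p$ were K3, the Leray spectral sequence for $X\to A/C_p$, together with $H^i(A/C_p,\Q_\ell)\cong H^i(A,\Q_\ell)^{C_p}$ for $\ell\neq p$, gives $H^2(X,\Q_\ell)\cong H^2(A,\Q_\ell)^{C_p}\oplus\bigoplus\Q_\ell(-1)$. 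This forces at least $22-6=16$ exceptional curves, a contradiction.
\end{itemize}
Your closing consistency check uses an upper bound; the decisive inequality is this lower bound.

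For $p=2$ the count alone does not suffice, since a single $D_{16}^4$ would fit. The paper instead shows that a rigid involution must be $\iota$, and invokes Katsura's result that the Kummer quotient of a supersingular abelian surface in characteristic $2$ has an elliptic singularity.

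Relatedly, you apply Artin's tables mainly to order-$p$ elements. The paper's key step applies them to all of $G$: $G$ fixes $O$, so $G$ embeds in the local fundamental group at the image of $O$. This also removes Rybakov's extra groups $C_3\rtimes C_8$, $C_3\times Q_8$, $C_3\rtimes Q_{16}$, $\ESL_2(\FF_5)$ and $C_5\rtimes C_8$, which your candidate list does not address. What remains is $C_2,C_4,C_6,Q_{12}$ for $p=2$, $C_3,C_6,\SL_2(\FF_3)$ for $p=3$, and $C_5$ for $p=5$.

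\textbf{The existence step.} This fails in exactly the new cases. Curves with $j=0$ or $1728$ are supersingular in characteristics $2$ and $3$, so they give $f(A)=0$ and cannot realise any row with $f(A)=2$.

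Your claim that an element of order $3$ or $4$ on an ordinary $A$ forces CM by $\Z[\omega]$ or $\Z[i]$, with $p$ split, is false. It is also self-defeating: $p=2,3$ never splits in $\Q(\omega)$ or $\Q(i)$, so it would wipe out the rows you need. The maps $(P,Q)\mapsto(-P-Q,P)$, $(P,Q)\mapsto(-Q,P)$ and $(P,Q)\mapsto(P+Q,-P)$ come from $\SL_2(\Z)$ and act rigidly and symplectically on $E\times E$ for every $E$. This is how the paper realises the cyclic rows, using $E_1\times E_2$ with one ordinary and one supersingular factor for the $f(A)=1$ row.

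Only $Q_{12}$ at $p=2$ and $\SL_2(\FF_3)$ at $p=3$ need CM, and then by an order in which $p$ splits. The paper uses CM by $\tfrac12(1+\sqrt{7}i)$, ordinary at $2$, and CM by $\sqrt{2}i$, ordinary at $3$.

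Finally, preserving the group law does not make an action symplectic; $\iota\times\mathrm{id}$ on $E_1\times E_2$ is a counterexample. Symplecticity must be imposed, as the statement does.
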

\vspace{5pt}
In Section \ref{sec}, we will explain the contributions of Katsura and Rybakov to the classification of generalised Kummer surfaces. One of the key properties that the quotients $A/G$ have to satisfy is that the singularities must be rational double points. We will recall the theory of rational points in positive characteristic and present the key idea of this paper, which is that the possible ADE singularity types of the quotient $A/G$ must satisfy that their local fundamental group contains $G$ as a subgroup.\\

In Section \ref{computingsings_section}, we will see how the singular points of the quotients $A/G$ are determined by the action of $G$ on the $\ell$-adic rational Tate module of $A$, where $\ell\mid |G|$. When $\ell$ equals the characteristic $p$, we refine the existing theory to show that the number of singular points of $A/G$ is determined by the $p$-rank of $A$.\\

These methods do not apply when $A$ is supersingular. Nevertheless, in Section \ref{ss_section} we show that this causes no issue by proving that if $A$ is supersingular and $p\mid |G|$ then $A/G$ can never be a generalised Kummer surface.\newpage

Finally, in Section \ref{sec_examples}, we construct examples for all the new entries in Table \ref{main_table} as quotients of products of two elliptic curves.

\begin{acknowledgements}
 I would like to thank Damiano Testa for his invaluable guidance and support. I would also like to thank Chris Lazda, and Tianchen Zhao for their help, particularly regarding the arguments in Section \ref{ss_section}. This paper also benefited from useful comments from Samir Siksek and Alexei Skorobogatov. 
\end{acknowledgements}

\vspace{5pt}
\section{Characterising generalised Kummer surfaces} \label{sec}
\subsection{Katsura's theorem}
In order to prove Theorem \ref{main_theorem}, we need to determine when the quotient of an abelian surface by a group action is a generalised Kummer surface. As discussed in the introduction, the work of Katsura \cite{Katsura1987GeneralizedP} and Rybakov \cite{Rybakov2024GeneralizedFields} settles most cases of the classification; accordingly, we concentrate on the remaining cases in characteristics $2$, $3$, and $5$.\\

Let $A$ be an abelian surface defined over an algebraically closed field $k$, and for the rest of this paper, let $G$ be a finite group acting on $A$ that preserves the group law, so that there is a monomorphism $G\hookrightarrow \End(A)^\times$. Let $\omega_A$ be a non-zero regular $2$-form on $A$. This is unique up to constant multiple as $h^{2,0}(A)=1$. Then, given an element $g$ of order $n$ of a group acting on $A$, we have that $g^*\omega_A$ must be equal to the product of $\omega_A$ by an $n$-th root of unity.

\begin{definition}
 The action of a finite group $G$ on an abelian variety $A$ is said to be \textbf{symplectic} if $g^*\omega_A=\omega_A$ for all $g\in G$.
\end{definition}

Note that in the case where $A$ is the product of two elliptic curves $E_1$ and $E_2$, $\omega_A$ can be expressed as $\omega_1\wedge \omega_2$ where $\omega_i$ is a global differential of $E_i$. 

\begin{definition}
The action of a finite group $G$ on an abelian variety $A$ is said to be \textbf{rigid} if and only if every non-trivial $g \in G$ fixes a finite, non-zero number of points.
\end{definition}

The main theorem in Katsura's paper states the following:
\begin{theorem}\cite[Theorem 2.4]{Katsura1987GeneralizedP} \label{Katsura_theorem}
Assume that the characteristic of the base field is not two. Then, a relatively minimal model of $A/G$ is a $K3$ surface if and only if $G$ satisfies the following conditions:
\begin{itemize}
\item The action is rigid.
\item The action is symplectic.
\item The quotient $A/G$ is singular, and all the singular points of $A/G$ are rational double points.
\end{itemize}
\end{theorem}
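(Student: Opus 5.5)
Throughout, let $\pi\colon \widetilde{X}\to A/G$ denote the minimal resolution and $X=A/G$. The plan is to prove both directions by comparing the canonical classes and the irregularity of $\widetilde{X}$ with the invariants of $A$. We control these through $G$-invariant differential forms and through the discrepancies of the singularities.

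\emph{Necessity.} Suppose $\widetilde{X}$ is K3.

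First, the action must be symplectic. Since $h^0(\widetilde{X},\omega)=1$, the canonical form on $\widetilde{X}$ pulls back to a nonzero $G$-invariant rational $2$-form on $A$. This form is regular, since $A$ has no exceptional curves to create poles, and so it is a constant multiple of $\omega_A$ that is $G$-invariant.

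Second, $q(\widetilde{X})=0$ forces $H^0(A,\Omega^1)^G=0$. If some non-trivial $g$ fixed a curve, we would try to show the quotient by the subgroup generated by $g$ factors through a map to an elliptic curve. Alternatively, an invariant $1$-form would appear, because $g$ acting with eigenvalue $1$ on the tangent space gives an eigenvalue-$1$ direction on $H^0(\Omega^1_A)$ after accounting for symplecticity. Since $\det=1$ on the tangent space, both eigenvalues equal $1$, so $g$ is unipotent on the Lie algebra. In characteristic $\neq 2$ we would then argue, via the Tate module or the rational representation, that such a finite-order automorphism preserving the group law has a positive-dimensional fixed locus only if it is trivial on a quotient elliptic curve. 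This produces a nonzero invariant $1$-form and contradicts $q=0$. Hence the action is rigid. Rigidity in turn gives finitely many fixed points, so $X$ is singular: $G\neq 1$ fixes the origin, and a fixed isolated point of a symplectic action is singular, since otherwise $X$ would be an abelian surface quotient étale in codimension one.

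Third, since $K_{\widetilde{X}}=0$ and $K_X$ is numerically trivial (the quotient map is étale in codimension one and $\omega_A$ is invariant), we get $K_{\widetilde{X}}=\pi^*K_X-\sum a_iE_i$ with all $a_i=0$. So the singularities are canonical surface singularities, i.e.\ rational double points.

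\emph{Sufficiency.} Rigidity means $A\to X$ is étale in codimension one. Symplecticity then makes $\omega_A$ descend to a nowhere-vanishing $2$-form on the smooth locus, so $K_X\sim 0$. Rational double points are crepant, so $K_{\widetilde{X}}=0$. Rigidity again kills invariant $1$-forms, giving $q(\widetilde{X})\le\dim H^0(\Omega^1_A)^G=0$. Here we need Hodge-type control in positive characteristic, so we would bound $h^1(\mathcal{O})$ via $H^1(A,\mathcal{O}_A)^G$ together with rationality of the singularities. Finally, Bombieri–Mumford's classification gives that a surface with $K=0$ and $h^1(\mathcal{O})=0$ is K3. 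The case $p\ne2$ is needed to exclude Enriques surfaces, which have $K\neq0$ anyway, and to exclude wild phenomena. The singularity hypothesis excludes the possibility that $X$ is smooth, in which case $X$ would be abelian.

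\emph{Main obstacle.} The hardest step is the positive-characteristic passage from $G$-invariants on $A$ to $h^1(\mathcal{O}_{\widetilde{X}})$ and $H^0(\Omega^1)$ when $p\mid |G|$. Taking invariants is then not exact, and inseparability issues arise. I would first try to compute $H^1(\widetilde{X},\mathcal{O})=H^1(X,\mathcal{O}_X)$, using rationality of the singularities. I would then bound this by the invariants of $H^1(A,\mathcal{O}_A)$ through the trace or norm map, falling back on a Hochschild–Serre type spectral sequence when $p$ divides $|G|$.
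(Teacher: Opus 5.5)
\textbf{The sufficiency argument fails in the wild case.} Your proof that $K_{\widetilde{X}}\sim 0$ under the three hypotheses is correct; it is the first step of Katsura's argument as sketched in the paper. The gap is the next step. You claim that rigidity kills $G$-invariant $1$-forms, or $G$-invariants of $H^1(A,\mathcal{O}_A)$. This is false when $p\mid |G|$, and such wild cases with $p=3$ lie squarely within the theorem.

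Take $\tau_3(P,Q)=(-P-Q,P)$ on $E\times E$ in characteristic $3$. Then $\tau_3^*\omega_1=-\omega_1-\omega_2$ and $\tau_3^*\omega_2=\omega_1$. So $\tau_3^*$ has characteristic polynomial $x^2+x+1=(x-1)^2$ on $H^0(A,\Omega^1_A)$, and $\omega_1-\omega_2$ is invariant. The same matrix acts on $H^1(A,\mathcal{O}_A)$, so it also has a nonzero invariant. Yet for $E$ ordinary this action is rigid and symplectic, the quotient has $3E_6^1$, and its resolution is K3. The invariant form simply acquires poles along the exceptional curves.

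Consequently $q(\widetilde{X})\le \dim H^0(A,\Omega^1_A)^G$ only gives $q\le 1$. No trace-map or Hochschild--Serre refinement of invariants on $A$ can produce $h^1(\mathcal{O}_{\widetilde{X}})=0$. Your ``main obstacle'' is a dead end, not a technicality.

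The missing idea, which is the route the paper sketches, is to avoid computing $h^1(\mathcal{O})$ altogether:
\begin{enumerate}
\item Since $K_{\widetilde{X}}\sim 0$, $\widetilde{X}$ is minimal of Kodaira dimension $0$.
\item For $p\neq 2$, the Bombieri--Mumford classification leaves only K3, abelian or (quasi-)hyperelliptic surfaces. Enriques surfaces are excluded because their canonical class is then nontrivial $2$-torsion. This, not any control of wild phenomena, is where $p\neq 2$ enters: in characteristic $2$, non-classical Enriques surfaces have $K\sim 0$.
\item The hypothesis that $A/G$ is singular supplies $(-2)$-curves on $\widetilde{X}$. Abelian and (quasi-)hyperelliptic surfaces contain no such curves, so $\widetilde{X}$ is K3.
\end{enumerate}
You used the singularity hypothesis only to rule out $X$ being smooth, which is not enough.

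\textbf{The same false principle breaks your necessity argument for rigidity.} The claim ``$q(\widetilde{X})=0$ forces $H^0(A,\Omega^1)^G=0$'' fails by the example above. In any case, the form you construct is only $\langle g\rangle$-invariant, not $G$-invariant.

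Here is a characteristic-free replacement. Suppose $g\neq 1$ fixes a curve $C$ pointwise. Then $A\to A/G$ has nontrivial inertia along $C$, so it is not unramified there. Hence the pullback of a local generator of $\Omega^2$ at a general (smooth) point of the image of $C$ vanishes along $C$. So the pullback of the nowhere-vanishing $2$-form of $\widetilde{X}$ would vanish on $C$. But you correctly identified this pullback as a nonzero multiple of $\omega_A$, which vanishes nowhere, so this is absurd.

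Your remaining necessity steps are fine: symplecticity, singularity of $A/G$ via purity of the branch locus, and rational double points from $K_{\widetilde{X}}\cdot E_i=0$.
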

If the resolution of $A/G$ is a K3 surface, then Katsura proved in every characteristic that the three conditions have to be met. As for the converse statement, the proof relies on the fact that if the three conditions are satisfied, one can deduce that the canonical bundle of the minimal resolution of $A/G$ is trivial. Then, from the fact that the action is rigid, one can deduce that $A/G$ has singular points and therefore the resolution $S$ contains curves whose self-intersection number is negative. This implies that $S$ cannot be an abelian surface, a hyperelliptic surface or a quasi-hyperelliptic surface. If the characteristic of the base field is not two, we can deduce that $S$ is a K3 surface. If the characteristic is two, this is not necessarily true any more, but if we assume additional conditions, we can deduce that the quotient is a K3 surface.
\begin{proposition}
If we assume that the characteristic of the base field is two, then Theorem \ref{Katsura_theorem} still holds if the Picard number of the minimal resolution of $A/G$ is greater than $10$.
\end{proposition}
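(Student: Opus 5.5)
The plan follows Katsura's argument as sketched above and changes only the final step, where characteristic $2$ intervenes. The \emph{necessity} direction holds in every characteristic, so only the converse needs proof. Assume the action is rigid and symplectic, $A/G$ is singular, and all singularities of $A/G$ are rational double points. Let $\pi\colon S\to A/G$ be the minimal resolution.

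First, as in Katsura's proof, I will show that $K_S$ is trivial. Rigidity means the fixed locus of each non-trivial $g$ is finite, so $A\to A/G$ is étale in codimension one. Symplecticity means $\omega_A$ descends to a nowhere-vanishing $2$-form on the smooth locus of $A/G$. Since the singularities are rational double points, $\pi$ is crepant, so this form extends to a nowhere-vanishing $2$-form on $S$, and $K_S\sim 0$. Next, because $A/G$ is singular, $S$ contains $(-2)$-curves. A $(-2)$-curve cannot exist on an abelian surface, and I will argue that it cannot exist on a (quasi-)hyperelliptic surface either: these surfaces are fibred over an elliptic curve, which forces every curve on them to have non-negative self-intersection. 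By the Bombieri--Mumford classification of surfaces with numerically trivial canonical class in characteristic $2$, $S$ is then either a K3 surface or an Enriques surface with trivial canonical bundle, i.e.\ a classical, singular ($\mu_2$) or supersingular ($\alpha_2$) Enriques surface for which $K_S\sim0$.

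The key step is to exclude the Enriques case using the Picard number. Every Enriques surface in any characteristic satisfies $b_2=10$, since $c_2=12$, $b_1=0$ and $b_0=b_4=1$. Hence $\rho(S)\le b_2(S)=10$ by the Igusa inequality $\rho\le b_2$. If $\rho(S)>10$, then $S$ cannot be Enriques, so it must be K3, which proves the proposition.

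The main point to check carefully is that the list of possibilities in characteristic $2$ really is exhausted by K3, abelian, (quasi-)hyperelliptic and Enriques surfaces. I will also confirm that $S$ is minimal, which holds because $K_S$ is trivial and so $S$ contains no $(-1)$-curves. The remaining task is to verify that each non-K3 type has either no $(-2)$-curves or $b_2\le 10$. I would cite Bombieri--Mumford for the classification and note that every non-K3 case has $b_2\le 10$ (abelian $6$, hyperelliptic $2$, Enriques $10$). This gives a uniform argument that bypasses the $(-2)$-curve consideration entirely.
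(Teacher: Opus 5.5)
Your proof is correct and follows the paper's argument: Katsura's reasoning gives $K_S\sim 0$ and rules out abelian and (quasi-)hyperelliptic surfaces, and the only extra possibility in characteristic $2$, a non-classical Enriques surface, is excluded because $\rho(S)\le b_2(S)=10$. Your closing observation that every non-K3 surface with trivial canonical class has $b_2\le 10$ is a valid small streamlining that makes the $(-2)$-curve step unnecessary. Listing classical Enriques surfaces is harmless but superfluous, since in characteristic $2$ they never have $K_S\sim 0$.
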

\begin{proof}
Kodaira's classification of surfaces states that if the characteristic of the base field is not two, then any minimal surface $S$ with trivial canonical bundle must be either abelian, hyperelliptic, quasi-hyperelliptic or K3. However, in characteristic two, there is the additional possibility that $S$ is a non-classical Enriques surface defined as the quotient of a K3 surface by the action of the group schemes $\mathbb{Z}/2\mathbb{Z}$ or $\alpha_2$ \cite[Section 7]{Liedtke2013AlgebraicCharacteristic}. Any condition that allows us to discard this case will suffice to deduce that the quotient is a K3 surface, for instance, if $\rho(S)>10$, then $S$ must be a K3 surface, as the second Betti number of an Enriques surface is $10$.
\end{proof}
Showing that the Picard number of the resolution is greater than $10$ is an easy condition to check from the singularities of $A/G$.\newpage In particular, in all the quotients that we considered in this paper the number of exceptional divisors that we obtain in the resolution of the singularities of $A/G$ will force the Picard number of any generalised Kummer surface to be at least $17$.\\

It is important to note that the three conditions of Theorem \ref{Katsura_theorem} are necessary. For instance, we have the following:

\begin{proposition}\cite[Theorem 2.11]{Katsura1987GeneralizedP} \label{ellsing_prop}
Let $G$ be a finite subgroup acting on $A$ such that the action of $G$ has no fixed curves. If $A/G$ has at least one singular point other than rational double points, then $A/G$ is a rational surface.
\end{proposition}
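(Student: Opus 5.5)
Let $\pi\colon A\to A/G$ be the quotient map, $X=A/G$, and $\sigma\colon S\to X$ the minimal resolution. The plan is to compare canonical classes and use the fact that non-rational-double-point singularities contribute negative discrepancy. Since $G$ acts with no fixed curves, $\pi$ is étale in codimension one, at least in the tame case; in the wild case one uses that there are no divisorial ramification components. Hence $K_A=\pi^*K_X$ as $\mathbb{Q}$-Cartier divisors. Because $K_A\sim 0$, some multiple $mK_X$ is trivial, so $K_X$ is numerically trivial. Writing $K_S=\sigma^*K_X+\sum a_iE_i$ with $E_i$ the exceptional curves, the rational double points give $a_i=0$. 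A non-rational-double-point singularity $x$ gives some $a_i<0$: if $x$ is a rational singularity that is not a rational double point, this follows from the classification of log-terminal/rational surface singularities; if $x$ is non-rational, the fundamental cycle computation gives an even more negative $a_i$. I would invoke Artin's criterion here: a normal surface singularity is a rational double point if and only if the discrepancies of the minimal resolution vanish.

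With such a singularity present, $K_S\equiv\sum a_iE_i=-D$ with $D$ effective and nonzero. Then for any ample $H$ on $S$ we have $K_S\cdot H<0$, so $H^0(S,nK_S)=0$ for all $n\geq1$, i.e. $\kappa(S)=-\infty$. By the classification of surfaces, valid in any characteristic (Bombieri--Mumford), $S$ is then either rational or birationally ruled over a curve $C$ of genus $g\geq1$.

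To exclude the ruled non-rational case, I would argue that $q(S)=0$, or at least that the base curve cannot exist. A ruling $S\dashrightarrow C$ with $g(C)\geq1$ gives a map $A\dashrightarrow S\dashrightarrow C$, which extends to a morphism since $A$ contains no rational curves; so its image is a curve $C$ of genus $\geq1$. The induced map $\mathrm{Alb}(A)=A\to J(C)$ is then $G$-equivariant with $G$ acting trivially on the target. Since the fibration descends to $A/G$, $G$ preserves the fibres, and so $G$ acts on the one-dimensional abelian quotient (more precisely on the corresponding elliptic curve $B$ through $A\to B$) trivially. Then every $g\in G$ acts trivially on $B$, so $g-1$ has image inside the kernel abelian subvariety $E$. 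Consequently $g$ fixes a coset-structure of curves: translates of the connected kernel of $g-1$ which are one-dimensional. This produces a positive-dimensional fixed locus of $g$, namely a curve, contradicting the hypothesis of no fixed curves. So $C\cong\mathbb{P}^1$ and $S$ is rational.

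The main obstacle is the first step in characteristic $p$ dividing $|G|$. There, wild ramification could in principle create divisorial contributions to $K_A-\pi^*K_X$, and the discrepancy comparison must be done carefully, possibly using purity of the branch locus for the quotient map when there are no fixed curves. If this step resists, I would instead first try to show $K_X$ is $\mathbb{Q}$-linearly trivial via the trace of $2$-forms, then proceed as above.
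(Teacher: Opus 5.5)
The paper does not prove this statement; it quotes it from \cite[Theorem 2.11]{Katsura1987GeneralizedP}, so there is no in-paper argument to compare with. Your argument is a correct, self-contained proof in the paper's setting, where every $g\in G$ is a homomorphism. Its steps are: $K_X$ is numerically trivial; a non-rational-double-point singularity gives negative discrepancies, so $\kappa(S)=-\infty$; then an irrational ruling is ruled out via a $G$-invariant map $A\to C$ and the Albanese. Several steps should be tightened, and your ``main obstacle'' is not actually an obstacle.

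First, wild ramification causes no trouble. In every characteristic the quotient map $\pi\colon A\to A/G$ is étale at every point with trivial stabiliser, i.e. trivial inertia. If no nontrivial $g$ fixes a curve pointwise, then $\pi$ is étale outside a finite set. Hence $K_A=\pi^*K_X$ as Weil divisors, and $|G|K_X=\pi_*\pi^*K_X=\pi_*K_A\sim 0$, since the push-forward of a principal divisor is the divisor of its norm. No purity or trace-of-$2$-forms argument is needed.

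Second, to get $K_S\cdot H<0$ you need \emph{all} $a_i\le 0$, not just one $a_i<0$; you assert that $D$ is effective without justification. It holds because on the minimal resolution $K_S\cdot E_j\ge 0$ for every exceptional curve. Hence $(\sum a_iE_i)\cdot E_j\ge 0$ for all $j$, and the negativity lemma gives $\sum a_iE_i\le 0$.

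The criterion ``all $a_i=0$ if and only if rational double point'' needs no classification of log terminal singularities or fundamental-cycle estimates. If all $a_i=0$ over $x$, then $K_S\cdot E_i=0$ for each $i$. Adjunction then gives $p_a(E_i)=0$ and $E_i^2=-2$, and negative definiteness forces an ADE graph, which is a rational double point.

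Third, the rational map from $A$ to $C$ extends to a morphism because $C$ (not $A$) contains no rational curves. Alternatively, compose with $C\hookrightarrow J(C)$ and use that rational maps from smooth varieties to abelian varieties extend.

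Finally, state the last step cleanly. From $h\circ g=h$ you get $(g-1)(A)\subseteq(\ker h)^0$, which is one-dimensional. So $\ker(g-1)=\mathrm{Fix}(g)$ has dimension at least $1$ for every $g\neq 1$, contradicting the hypothesis; note that $G\neq 1$ because $A/G$ is singular. This uses that $g$ is a homomorphism, which is the paper's standing convention. If translations were allowed, apply the argument to the linear part of any element that has a fixed point.
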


We will use this result in Section \ref{sec_examples} to prove the rationality of $A/G$ in some cases.

\subsection{Groups acting on abelian surfaces}
Some of the finite groups acting on surfaces in this context are non-standard, so it will be helpful to define them.

\begin{definition}
The \textbf{binary dihedral group} (or dicyclic group) $Q_{4n}$ of order $4n$ is the group defined by the presentation:
\begin{align*}
Q_{4n}=\langle a,b\mid a^nb^2,b^4, abab^{-1}\rangle.
\end{align*}
\end{definition}
These groups can also be characterised as being non-split extensions of $C_{2n}$ by a cyclic group of order two. The groups \href{https://beta.lmfdb.org/Groups/Abstract/8.4}{$Q_8$} and \href{https://beta.lmfdb.org/Groups/Abstract/16.9}{$Q_{16}$} are known as the \textbf{quaternion groups }of orders $8$ and $16$. As for the rest of them of order less than $24$, we have that \href{https://beta.lmfdb.org/Groups/Abstract/12.1}{$Q_{12}$}$\,\cong C_3\rtimes C_4$, \href{https://beta.lmfdb.org/Groups/Abstract/20.1}{$Q_{20}$}$\,\cong C_5\rtimes C_4$, and \href{https://beta.lmfdb.org/Groups/Abstract/24.4}{$Q_{24}$}$\,\cong C_3\rtimes Q_8$. Another relevant family of groups that we need to know is the following:

\begin{definition}
The \textbf{extended special linear group }$\ESL_2(\FF_{p})$ is the subgroup of
$\SL_2(\FF_{p^2})$ generated by $\SL_2(\FF_p)$ and an element given by the diagonal matrix 
\[M=\left (\begin{array}{c c}
\alpha & 0\\
0 & \alpha^{-1}\\	
\end{array}\right),\] where $\alpha\in\FF_{p^2}\setminus \FF_p$, and $\alpha^2$ is a primitive element which generates $\FF_p^\times$.
\end{definition}
\begin{remark}
Many of the groups acting on surfaces are \textbf{binary polyhedral groups}, that is, they are preimages under the map $\mathrm{SU}(2)\rightarrow\mathrm{SO}(3)$ of the group of rotations of a polyhedron. In addition to the binary dihedral groups, notable examples include: \href{https://beta.lmfdb.org/Groups/Abstract/24.3}{$\SL_2(\mathbb{F}_3)$}, which is isomorphic to the \textbf{binary tetrahedral group}; \href{https://beta.lmfdb.org/Groups/Abstract/48.28}{$\ESL_2(\mathbb{F}_3)$}, the \textbf{binary octahedral group}; and \href{https://beta.lmfdb.org/Groups/Abstract/120.5}{$\SL_2(\mathbb{F}_5)$}, the \textbf{binary icosahedral group}.
\end{remark}

Given a group $G$, the rational group algebra $\mathbb{Q}[G]$ is isomorphic to the sum of simple algebras $\mathbb{H}_V$, one for each irreducible representation $(V,\rho)$ over $\mathbb{Q}$. From this description, we can construct another algebra, the \textbf{rigid group algebra} $\mathbb{Q}[G]^{\rig}$ of $G$ as the sum over all the simple algebras corresponding to irreducible representations without fixed points,
\begin{align*}
\mathbb{Q}[G]^{\rig}=\bigoplus_{\substack{(V,\rho)\text{ without} \\ \text{fixed points}}}\mathbb{H}_V.
\end{align*}
Here, a representation is said to be \textbf{without fixed points} if $\rho(g)(v)\neq v$ for every nontrivial $g\in G$ and every nonzero $v\in V$.\\

Rybakov found a connection between the rigid group algebra of a group $G$ and rigid actions of $G$ on abelian varieties. More specifically, he proved the following:
\begin{proposition} \cite[Corollary 3.9]{Rybakov2024GeneralizedFields}
There exists an abelian variety with a rigid action of $G$ in the isogeny class of an abelian variety $A$, if and only if there exists a homomorphism of $\Q$-algebras $\Q[G]^{\rig}\rightarrow\End^\circ(A).$
\end{proposition}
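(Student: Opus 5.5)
The plan is to prove both directions by passing between rigid actions and fixed-point-free representations of $G$ on the rational Tate module, working up to isogeny throughout. For a prime $\ell \neq p$ we write $V_\ell(A) = T_\ell(A)\otimes_{\Z_\ell}\Q_\ell$, a faithful $\End^\circ(A)$-module of dimension $2\dim A$.

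For the forward direction, suppose $A'$ is isogenous to $A$ and carries a rigid action of $G$. An isogeny gives $\End^\circ(A')\cong\End^\circ(A)$, so we may assume $A'=A$. Consider the induced algebra map $\Q[G]\to\End^\circ(A)$, and decompose $\Q[G]=\bigoplus_V \bbH_V$. The first step is to show that every summand $\bbH_V$ on which this map does not vanish comes from a representation without fixed points; the map then factors through the projection onto $\Q[G]^{\rig}$, which gives the desired homomorphism.

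To prove this, suppose some $\bbH_V$ with $V$ having fixed points acts non-trivially, and choose $1\neq g\in G$ with $V^{g}\neq 0$. Then the idempotent $e=\frac{1}{|g|}\sum_{i} g^i$ acts non-trivially on the $V$-isotypic part, and $e\cdot(\text{that part})$ is non-zero. Hence the endomorphism $|g|e=\sum_i g^i$ of $A$ has image a positive-dimensional abelian subvariety $B$, fixed pointwise by $g$. (Equivalently, $1-g$ is not an isogeny, because $\ker(1-g)$ on $V_\ell$ is non-zero.) Consequently $\ker(1-g)$ is infinite, so $g$ fixes infinitely many points, contradicting rigidity. For the same reason, a rigid action in particular forces every non-trivial $g$ to satisfy: $1-g$ is an isogeny.

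For the converse, suppose $\phi\colon\Q[G]^{\rig}\to\End^\circ(A)$ is a $\Q$-algebra homomorphism, assumed unital as in Rybakov's setting. Compose $\phi$ with $\Q[G]\to\Q[G]^{\rig}$ to obtain $G\to\End^\circ(A)^\times$. Let $\calO$ be an order of $\End^\circ(A)$ containing $\Z[G]$, for instance the $\Z$-span of $\phi(G)$ together with $\End(A)$; this is finitely generated because $G$ is finite and $\End(A)$ is a lattice. A standard construction then produces an isogenous $A'$ with $\calO\subseteq\End(A')$: take $A'=A/K$ with $K=\sum_{g}\ker(N g)$ for $N$ clearing denominators, or equivalently use the lattice $\calO\cdot T_\ell(A)$ at each $\ell$, and $\calO T_p$ on the Dieudonné module at $p$. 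So $G$ acts on $A'$ preserving the group law.

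Rigidity of this action follows because on $V_\ell(A')$ only representations without fixed points occur. Thus for every $g\neq 1$ the map $1-g$ is injective on $V_\ell$, so it is an isogeny, and its kernel, the fixed locus of $g$, is finite and contains $0$. It is therefore non-empty and finite, which is exactly rigidity.

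The main obstacle is the lattice-saturation step at $\ell=p$, where one must make $\calO$ act on a Dieudonné module (or $p$-divisible group) rather than on a Tate module. I would handle it by citing the general fact that every order in $\End^\circ(A)$ is contained in $\End(A')$ for some $A'$ isogenous to $A$, proved via $A'=A/\bigl(\bigcap_{x\in\calO}\ker(Nx)\bigr)$-type quotients. This isogeny $A\to A'$ can be chosen of order prime to nothing in particular; one takes the quotient by a finite subgroup scheme, which may be non-reduced in characteristic $p$.
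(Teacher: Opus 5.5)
The paper does not prove this statement; it quotes it from Rybakov's Corollary 3.9, so there is no in-paper argument to compare against. Judged on its own, your proof is correct.

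\emph{Forward direction.} If $\phi(1_V)\neq 0$ for a component $\bbH_V$ with fixed points, then simplicity of $\bbH_V$ makes $\phi$ injective on it. Hence $\phi(e_g)\neq 0$ for some $g\neq 1$ with $V^g\neq 0$. The endomorphism $\sum_i g^i$ then has positive-dimensional image, fixed pointwise by $g$, which contradicts rigidity. The induced map on $\Q[G]^{\rig}$ is automatically unital.

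\emph{Converse.} Unitality of $\phi$ is exactly what is needed; otherwise the zero map would make every $G$ admissible, for instance $C_2\times C_2$, where $\Q[G]^{\rig}=0$. You were right to flag it. You only use $V_\ell$ for $\ell\neq p$, together with the fact that an endomorphism injective on $V_\ell$ is an isogeny. So you never need the hypothesis $p\nmid|G|$ attached to Proposition \ref{rigid_propo}, and your argument also covers the case $p\mid|G|$ that this paper is about.

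Two points need cleaning up.

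\emph{Your choice of order is wrong.} The $\Z$-span of $\phi(G)$ and $\End(A)$ is not closed under multiplication. The ring it generates need not be an order at all: if $\End(A)$ is a maximal order not containing $\phi(G)$, then no order of $\End^\circ(A)$ contains both. In that case no isogenous $A'$ has $\End(A')\supseteq \End(A)\cup\phi(G)$. Simply drop $\End(A)$. The $\Z$-span of $\phi(G)$ is already a ring, and your quotient $K=\sum_{g}\ker\alpha_g$, with $\alpha_g=N\phi(g)\in\End(A)$, uses only $G$.

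\emph{The $\ell=p$ step is not really an obstacle and needs no Dieudonn\'e theory.} From $\alpha_g\alpha_h=N\alpha_{gh}$, which holds in the torsion-free ring $\End(A)$:
$\alpha_h$ preserves $A[N]=\ker\alpha_1\subseteq K$;
$\alpha_h$ maps $\ker(N\alpha_g)=[N]^{-1}(\ker\alpha_g)$ into $\ker\alpha_{gh^{-1}}$.
Hence $\pi\circ\alpha_h=h'\circ\pi\circ[N]$ for a unique $h'\in\End(A/K)$. These are identities of morphisms of group schemes, so they hold equally when $K$ is non-reduced.

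Finally, the rigidity check in the converse can be made purely algebraic. The element $g-1$ is injective on each fixed-point-free $V$, hence a unit in $\bbH_V$ and so in $\Q[G]^{\rig}$. A unital $\phi$ sends it to a unit of $\End^\circ(A')$, that is, $1-g$ is an isogeny.
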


Using a description of Dickson of the possible subgroups $G$ of $\SL_2(\bar{k})$ \cite{Dickson1901LinearTheory}, he then proved the following theorem:

\begin{theorem}[{\cite[Theorem 4.3]{Rybakov2024GeneralizedFields}}] \label{th_rigid_actions}
Let $G$ be a finite group with a rigid and symplectic action on an abelian surface over a field of characteristic $p\geq 0$.
\newpage Then $G$ is one of the following groups:
\begin{itemize}
\item $G$ is a cyclic group of order $n\in\{2,3,4,5,6,8,10,12\}$. Then, $\Q[C_n]^{\rig}=\Q(\zeta_n)$.\\

\item $G$ is a binary dihedral group $Q_{4n}$ of order $4n$, where $2\leq n\leq 6$. Then, $\Q[G]^{\rig}$ are
\begin{center}
\begin{tabular}{c|ccccc}
$G$  & $Q_8$ & $Q_{12}$ & $Q_{16}$                &  $Q_{20}$                  & $Q_{24}$ \\
\hline 
$\Q[G]^{\rig}$ &  $\bbH_{2,\infty}$  & $\bbH_{3,\infty}$ & $\bbH_\infty(\Q(\sqrt{2}))$ & $\bbH_\infty(\Q(\sqrt{5}))$ & $\bbH_\infty(\Q(\sqrt{3}))$ \\
\end{tabular}
\end{center}

\item $G$ is $\SL_2(\FF_3)$, $\ESL_2(\FF_3)$, or $\SL_2(\FF_5)$. Then, $\Q[G]^{\rig}$ are

\begin{center}
\begin{tabular}{c|ccc}
$G$  & $\SL_2(\FF_3)$             & $\ESL_2(\FF_3)$ & $\SL_2(\FF_5)$\\
\hline 
$\Q[G]^{\rig}$ & $\bbH_{2,\infty}$  &$\bbH_\infty(\Q(\sqrt{2}))$ & $\bbH_\infty(\Q(\sqrt{5}))$\\
\end{tabular}
\end{center}

\item If $p=2$, $G$ can be $C_3\rtimes C_8$, or $C_3\times Q_8$. Then, $\Q[G]^{\rig}$ are

\begin{center}
\begin{tabular}{c|cc}
$G$  &  $C_3\rtimes C_8$ & $C_3\times Q_8$   \\
\hline 
$\Q[G]^{\rig}$ & $M_2(\Q[\zeta_4])$ & $M_2(\Q[\zeta_3])$  \\
\end{tabular}
\end{center}
\vspace{10pt}
\item If $p=3$, $G$ can be $C_3\rtimes C_8$, $C_3\times Q_8$, or $C_3\rtimes Q_{16}$. Then, $\Q[G]^{\rig}$ are

\begin{center}
\begin{tabular}{c|ccc}
$G$  & $C_3\rtimes C_8$ & $C_3\times Q_8$ &  $C_3\rtimes Q_{16}$\\
\hline 
$\Q[G]^{\rig}$  & $M_2(\Q[\zeta_4])$ & $M_2(\Q[\zeta_3])$ &  $M_2(\bbH_{3,\infty})$  \\
\end{tabular}
\end{center}
\vspace{10pt}

\item If $p=5$, $G$ can be $\ESL_2(\FF_5)$, or $C_5\rtimes C_8$. Then, $\Q[G]^{\rig}$ are

\begin{center}
\begin{tabular}{c|ccc}
$G$  &             $\ESL_2(\FF_5)$ & $C_5\rtimes C_8$\\
\hline 
$\Q[G]^{\rig}$ & $M_2(\bbH_{5,\infty})$  & $M_2(\bbH_{5,\infty})$  
\end{tabular}
\end{center}
\vspace{10pt}

\end{itemize}
Here, $\bbH_{p,\infty}$ denotes the quaternion algebra over $\Q$ that splits at $p$ and at the place at $\infty$,
and $\bbH_\infty(K)$ is the quaternion algebra over a number field $K$ that splits at the real places of $K$. 
\end{theorem}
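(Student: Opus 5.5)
The plan is to reduce the classification to finite subgroups of $\SL_2$ and then to rational-algebra constraints.

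\textbf{Step 1: embedding $G$ into $\SL_2(\bar k)$.} Since the action preserves the group law, each $g\in G$ is an automorphism of $A$ fixing $0$. The action on the tangent space $T_0A\cong \bar k^2$ gives a homomorphism $G\to \GL_2(\bar k)$.

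First we check that this map lands in $\SL_2(\bar k)$. By symplecticity, $g^*\omega_A=\omega_A$. On the invariant $2$-form the action is given by the determinant of the tangent action, so $\det=1$.

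Next we check injectivity. In characteristic zero this is automatic. For $p\mid |G|$ the differential of an automorphism may fail to be injective on elements, for instance when $dg=1$ but $g\neq 1$, so here we use rigidity instead. If $g\neq 1$ acted trivially on $T_0A$, then $1-g$ would be an endomorphism with zero differential. It would therefore be inseparable, so it is not an isomorphism, but it still has finite kernel because the action is rigid. We then argue through the $\ell$-adic Tate module for $\ell\neq p$. When $\ell\nmid |G|$, $G$ acts faithfully on $V_\ell A$ and each $g\neq 1$ has no eigenvalue $1$. Combining this with the determinant condition ($\det$ on $\wedge^2$ corresponds to $\omega_A$) gives that $\rho_\ell(G)$ lies in $\SL_2$ over a quaternion-type algebra.

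\textbf{Step 2: the rigid rational representation.} More cleanly, the action of $G$ on $H_1$, i.e.\ on $V_\ell A$ of rank $4$, is a rational representation, since $G\hookrightarrow\End(A)^\times$ and $\End^\circ(A)$ acts on $V_\ell$. Rigidity means no $g\neq1$ has eigenvalue $1$, so this representation is without fixed points. By the proposition of Rybakov cited above (Corollary 3.9), we obtain an algebra map $\Q[G]^{\rig}\to\End^\circ(A)$.

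\textbf{Step 3: invoking Dickson.} We use Dickson's list of finite subgroups of $\SL_2(\bar k)$. In characteristic $0$, or when $p\nmid |G|$, these are the cyclic, binary dihedral and binary polyhedral groups. When $p\mid |G|$ there are additionally groups of the form $(\text{elementary abelian } p\text{-group})\rtimes C_m$, together with $\SL_2(\FF_q)$ and $\ESL_2(\FF_q)$ type groups.

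\textbf{Step 4: pruning the list.} We discard candidates by two constraints.

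First, fixed-point-free representations force Frobenius-complement structure: Sylow subgroups must be cyclic or generalised quaternion. This kills the elementary abelian $p$-groups of rank $\geq 2$, and all $\SL_2(\FF_q)$ with $q\notin\{3,5\}$.

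Second, $\dim_\Q$ of a simple summand of $\Q[G]^{\rig}$ that embeds in $\End^\circ(A)$ is bounded. For an abelian surface, $\End^\circ(A)$ has $\Q$-dimension at most $16$, namely $M_2(\bbH_{p,\infty})$ in the supersingular case, and at most $8$ otherwise. For cyclic $C_n$ this gives $\varphi(n)\le 4$, so $n\in\{2,3,4,5,6,8,10,12\}$. For $Q_{4n}$ the quaternion algebra over $\Q(\zeta_{2n})^+$ gives $2\le n\le 6$, and similarly for the binary polyhedral groups. Computing $\Q[G]^{\rig}$ for each survivor yields the tables.

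\textbf{Step 5: the exceptional groups (main obstacle).} The main difficulty is the small-characteristic groups $C_3\rtimes C_8$, $C_3\times Q_8$, $C_3\rtimes Q_{16}$, $\ESL_2(\FF_5)$ and $C_5\rtimes C_8$. Their rigid algebras have dimension $16$, or are matrix algebras such as $M_2(\Q(\zeta_4))$ of dimension $8$.

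These force $\End^\circ(A)\supseteq M_2(\cdot)$, so $A$ is isogenous to $E^2$. For the dimension-$16$ algebras one then needs $E$ supersingular with $\End^\circ(E)=\bbH_{p,\infty}$ containing the required quaternion algebra. This holds only for the specified $p$: $\bbH_{3,\infty}$ needs $p=3$, $\bbH_{5,\infty}$ needs $p=5$, and $\Q(\zeta_3)$ or $\Q(\zeta_4)$ must embed in $\bbH_{p,\infty}$, which requires $p=2$ or $p=3$.

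I would verify case by case, via local invariants of quaternion algebras, that each such $\Q[G]^{\rig}$ embeds in $M_2(\bbH_{p,\infty})$ exactly for the listed primes. I would also confirm via Dickson that each group genuinely lies in $\SL_2(\bar{\FF}_p)$.
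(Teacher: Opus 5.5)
The paper gives no proof of its own here; it cites Rybakov, who combines Dickson's list with Corollary 3.9. Your overall plan follows that pattern, but Step 1 is false in exactly the case that matters.

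\textbf{Step 1 fails when $p\mid |G|$.} The tangent representation $G\to\GL(T_0A)$ need not be injective, and rigidity does not rescue it. You only show that $1-g$ is an inseparable isogeny with finite kernel, which is no contradiction.
\begin{itemize}
\item In characteristic $2$, $\iota=[-1]$ is rigid (its fixed points are $A[2]$), yet $d\iota=-1=1$.
\item The rigid symplectic $C_4=\langle\tau_4\rangle$, with $\tau_4(P,Q)=(-Q,P)$ on $E\times E$, cannot sit inside $\SL_2(\bar{\FF}_2)$, which has no elements of order $4$.
\item $C_3\times Q_8$, which appears in the list you are proving, embeds in no $\SL_2(\bar k)$ at all. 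In characteristic $2$ there is no element of order $4$; otherwise the centraliser of $Q_8$ in $\SL_2$ is $\{\pm1\}$.
\item A finite subgroup of $\SL_2(\bar{\FF}_3)$ with a normal subgroup of order $3$ lies in a Borel subgroup, so its quotient by the unipotent part is cyclic. This rules out $C_3\rtimes Q_{16}$.
\end{itemize}
Hence applying Dickson to $G$ in Step 3 is unjustified when $p\mid|G|$, and the check you plan at the end of Step 5 would fail.

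\textbf{The missing idea.} Let $K$ be the kernel of $G\to\GL(T_0A)$. It is a normal $p$-subgroup, because an element of order prime to $p$ acting trivially on $T_0A$ can be linearised on $\widehat{\mathcal O}_{A,0}$ and is therefore trivial. Since $g^*\omega_A=\det(dg)\,\omega_A$, symplecticity only gives $G/K\hookrightarrow\SL_2(\bar k)$. Dickson's classification has to be applied to $G/K$. It must then be combined with the Frobenius-complement structure of $G$ coming from the fixed-point-free action on $V_\ell(A)$, $\ell\neq p$. All the extra groups for $p=2,3$ occur only with $K\neq 1$. For example, $K=Q_8$ and $G/K=C_3$ for $C_3\times Q_8$ when $p=2$; $K=C_3$ and $G/K=Q_{16}$ for $C_3\rtimes Q_{16}$ when $p=3$.

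\textbf{Consequence for Step 5.} For $C_3\rtimes C_8$ and $C_3\times Q_8$, the algebras $M_2(\Q(\zeta_4))$ and $M_2(\Q(\zeta_3))$ do not force $p\in\{2,3\}$.
\begin{itemize}
\item $\Q(\zeta_3)$ and $\Q(\zeta_4)$ embed in $\bbH_{p,\infty}$ for infinitely many $p$.
\item These algebras already equal $\End^\circ(E^2)$ for a CM elliptic curve $E$ in characteristic $0$. By Rybakov's Corollary 3.9, both groups therefore act rigidly on suitable abelian surfaces there.
\end{itemize}
They are excluded in characteristic $0$ only by symplecticity, since neither is a subgroup of $\SL_2(\mathbb{C})$. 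That is precisely the condition on $G/K$, which your argument never correctly uses when $p\mid |G|$. As written, you have no proof that the list is exhaustive in that case.

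A smaller point: the Sylow condition alone does not eliminate every $\SL_2(\FF_q)$ with $q\notin\{3,5\}$; for instance $\SL_2(\FF_7)$ satisfies it. You also need that subgroups of order $pq$ are cyclic ($\SL_2(\FF_7)$ has a non-cyclic subgroup of order $21$), or Zassenhaus's theorem.
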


From Theorem \ref{Katsura_theorem}, Katsura produced a list of all finite groups $G$ such that $A/G$ can possibly be a generalised Kummer surface. Rybakov’s refined version of the classification states the following:

\begin{theorem}[{\cite[Proposition 5.1]{Rybakov2024GeneralizedFields}}]\label{classification_theorem}
Let $A$ be an abelian surface over a field of characteristic $p\geq0$ and assume $p\nmid |G|$. Then, $A/G$ is a generalised Kummer surface if and only if $G$ and $A$ satisfy the conditions listed in the first two columns of the table below, and the action of $G$ on $A$ is as in Theorem \ref{Katsura_theorem}.
\end{theorem}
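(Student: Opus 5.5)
Since $p\nmid|G|$, the plan is to reduce to the characteristic-zero analysis by three checks: which groups occur, which abelian surfaces admit them, and which singularities arise. By Theorem \ref{Katsura_theorem}, together with the remark that its necessity holds in every characteristic (and the Picard-number proposition for $p=2$), $A/G$ is a generalised Kummer surface exactly when the action is rigid, symplectic, and produces only rational double points. Theorem \ref{th_rigid_actions} then limits $G$ to the cyclic groups $C_n$ with $n\in\{2,3,4,5,6,8,10,12\}$, the binary dihedral groups $Q_{4n}$ with $2\le n\le 6$, the binary polyhedral groups, and the exceptional groups for $p=2,3,5$. Each of these exceptional groups has order divisible by $p$, so none of them occurs here. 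For each remaining $G$ I would use \cite[Corollary 3.9]{Rybakov2024GeneralizedFields}: a rigid action exists on some abelian surface in an isogeny class if and only if $\Q[G]^{\rig}$ embeds into $\End^\circ(A)$.

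The arithmetic conditions in the table come from comparing $\Q[G]^{\rig}$ with the possible endomorphism algebras of abelian surfaces in characteristic $p$, via Tate--Honda theory.
\begin{itemize}
\item For $\Q(\zeta_n)$ with $n=5,8,10,12$, this is a CM field of degree $4$. Such a field embeds in $\End^\circ(A)$ either for a simple CM surface, or, when $p$ is inert in the appropriate way, for a supersingular surface. Since $\Q(\zeta_n)$ cannot act on a surface isogenous to a product of an elliptic curve with itself unless the relevant quaternion algebra contains it, I expect the residue conditions $p\equiv\pm2 \bmod 5$, $p\equiv\pm3\bmod 8$ and $p\equiv\pm5\bmod 12$ to arise from requiring that $\Q(\zeta_n)$ embed in $M_2(\bbH_{p,\infty})$.
\item For the totally definite quaternion algebras $\bbH_\infty(K)$ with $K$ real quadratic, the algebra can only be realised inside $M_2(\bbH_{p,\infty})$, so $A$ must be supersingular. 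The same residue conditions appear, now as the condition that $K\otimes\Q_p$ is a field, so that $\bbH_\infty(K)\otimes\Q_p$ splits appropriately.
\item For $\bbH_{2,\infty}$ and $\bbH_{3,\infty}$, no condition on $p$ is expected beyond $p\nmid|G|$.
\end{itemize}
Symplecticity is automatic once the action is rigid, because the local action at a fixed point lies in $\SL_2$.

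The singularities would be computed by Lefschetz fixed-point counts on the $\ell$-adic Tate module with $\ell\ne p$. For $g\in G$, the number of fixed points is
\begin{align*}
\#A^g=\det(1-g\mid V_\ell A),
\end{align*}
which depends only on the characteristic polynomial of $g$, and this is the same as in characteristic zero. Because $p\nmid|G|$, each stabiliser acts tamely through a subgroup of $\SL_2$. Hence each singular point is the classical quotient singularity: $A_{n-1}$ for a cyclic stabiliser, $D$ for a binary dihedral one, and $E$ for a binary polyhedral one. Orbit counting via Burnside applied to the fixed-point numbers then reproduces Katsura's singularity lists. The $Q_8$ case needs special care, since the count of $Q_8$-fixed points ($2$ or $4$) depends on the lattice and not only on the characteristic polynomial, so there I would analyse the $2$-torsion as a $Q_8$-module directly.

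The converse requires existence: for each admissible $(G,p)$ there must actually be an abelian surface with a rigid action. I would use Rybakov's corollary and then check that rigidity and the number of fixed points are stable within the isogeny class. I expect the main obstacle to be the endomorphism-algebra step, namely deciding precisely for which $p$ the algebra $\Q[G]^{\rig}$ embeds. The case of a simple ordinary or almost-ordinary surface with CM by $\Q(\zeta_n)$ has to be excluded, or else shown to give the same conditions, via Shimura--Taniyama-type constraints on the CM type. I would attack this first, using the Honda--Tate classification of Weil numbers of degree four.
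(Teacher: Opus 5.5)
The gap is your claim that symplecticity is automatic once the action is rigid, ``because the local action at a fixed point lies in $\SL_2$''. Since $p\nmid|G|$, $G$ acts faithfully on $T_OA$ through $\GL_2(k)$, and landing in $\SL_2(k)$ \emph{is} the symplectic condition, so the argument is circular. The claim is also false. For example, $[\zeta_3]\times[\zeta_3]$ on $E\times E$ with $j(E)=0$ is rigid but multiplies $\omega_A$ by $\zeta_3^2$. Likewise $\zeta_5$ acting on the ordinary reduction of the Jacobian of $y^2=x^5+1$ at a prime $p\equiv 1\bmod 5$ is rigid but multiplies $\omega_A$ by $\zeta_5^3$. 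In both cases the quotient has non-RDP singularities. Katsura's criterion and Theorem~\ref{th_rigid_actions} (via Dickson's list of subgroups of $\SL_2(\bar k)$) both take symplecticity as a hypothesis; it has to be verified, not deduced.

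This matters because symplecticity, not the embedding of $\Q[G]^{\rig}$, produces the congruences for $C_5,C_8,C_{10},C_{12}$, and your proposed mechanism gives the wrong primes. The quartic field $\Q(\zeta_n)$ embeds in $M_2(\bbH_{p,\infty})$ exactly when every prime above $p$ has even local degree, i.e.\ whenever $p\not\equiv 1\bmod n$. So for $p\equiv -1\bmod n$ (e.g.\ $4\bmod 5$, $7\bmod 8$, $11\bmod 12$), Rybakov's Corollary~3.9 does give supersingular surfaces with a rigid $C_n$-action, yet these primes are absent from the table.

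What you actually need is the set $S\subset(\Z/n)^\times$ of characters of $\Z[\zeta_n]$ on $\mathrm{Lie}(A)$; symplecticity means $S=\{i,-i\}$. Write the Dieudonn\'e module as $M=\bigoplus_i M_i$ with rank-one eigenspaces. Frobenius is $\sigma$-linear and maps $M_i$ into $M_{pi}$, so the slope of the part supported on a $\langle p\rangle$-orbit $O$ is governed by $|S\cap O|/|O|$. The polarisation (Rosati acting as complex conjugation) puts the parts on $O$ and $-O$ in duality, so their slopes sum to $1$. Working this out, $S=\{i,-i\}$ occurs only if $A$ is supersingular and $p\not\equiv\pm1\bmod n$. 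That is precisely $f(A)=0$ with $p\equiv\pm2\bmod 5$, $\pm3\bmod 8$, $\pm5\bmod 12$.

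The same computation disposes of the non-supersingular CM surfaces you flagged as your main obstacle. Your Shimura--Taniyama idea is the right one there, but it is a symplecticity argument and contradicts your earlier claim that symplecticity is free. For the algebras $\bbH_\infty(K)$, your embedding criterion ($p$ inert in $K$) does give the correct conditions. Your fixed-point counting also matches the method behind Rybakov's Lemma~5.2, which is all the paper records, since it only cites this theorem.
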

\begin{adjustbox}{width=\textwidth}
\begin{minipage}{\textwidth}
\begin{table}[H]
\centering
\begin{tabular}{|c|cc|c|}
\hline
\rowcolor[HTML]{FFFFFF}
$G$                    & \multicolumn{2}{c|}{Conditions}                                              & Singularities of $A/G$         \\ \hline
$C_2$ & \multicolumn{2}{c|}{$p\neq2$}                                                & $16A_1$                \\ \hline
$C_3$ & \multicolumn{2}{c|}{$p\neq3$}                                                & $9A_2$                 \\ \hline
$C_4$ & \multicolumn{2}{c|}{$p\neq2$}                                                & $4A_3+6A_1$            \\ \hline
$C_5$                  & \multicolumn{1}{c|}{$p\equiv \pm 2\mod 5$}     & $f(A)=0$ & $5A_4$                 \\ \hline
$C_6$ & \multicolumn{2}{c|}{$p\neq2,3$}                                              & $A_5+4A_2+5A_1$        \\  \hline
$C_8$                  & \multicolumn{1}{c|}{$p\equiv \pm 3\mod 8$}     & $f(A)=0$ & $2A_7+A_3+3A_1$        \\ \hline
$C_{10}$               & \multicolumn{1}{c|}{$p\equiv \pm 2\mod 5$}     & $f(A)=0$ & $A_9+2A_4+3A_1$        \\ \hline
$C_{12}$               & \multicolumn{1}{c|}{$p\equiv \pm 5\mod 12$}    & $f(A)=0$ & $A_{11}+A_3+2A_2+2A_1$ \\ \hline
\multirow{2}{*}{$Q_8$} & \multicolumn{1}{c|}{\multirow{2}{*}{$p\neq2$}} & $\#Q_8\text{-fixed points}=2$                            & $2D_4+3A_3+2A_1$       \\ \cline{3-4} 
                       & \multicolumn{1}{c|}{}                          & $\#Q_8\text{-fixed points}=4$                              & $4D_4+3A_1$            \\ \hline
$Q_{12}$             & \multicolumn{2}{c|}{$p\neq2,3$}                                              & $D_5+3A_3+2A_2+A_1$    \\  \hline
$Q_{16}$               & \multicolumn{1}{c|}{$p\equiv \pm 3\mod 8$}     & $f(A)=0$ & $2D_6+D_4+A_3+A_1$     \\ \hline
$Q_{20}$               & \multicolumn{1}{c|}{$p\equiv \pm 2\mod 5$}     & $f(A)=0$ & $D_7+A_4+3A_3$         \\ \hline
$Q_{24}$               & \multicolumn{1}{c|}{$p\equiv \pm 5\mod12$}     & $f(A)=0$ & $D_8+D_4+2A_3+A_2$     \\ \hline
$\SL_2(\FF_3)$         & \multicolumn{2}{c|}{$p\neq2,3$}                                              & $E_6+D_4+4A_2+A_1$     \\  \hline
$\ESL_2(\FF_3)$        & \multicolumn{1}{c|}{$p\equiv \pm 3\mod 8$}     & $f(A)=0$ & $E_7+D_6+A_3+2A_2$     \\ \hline
$\SL_2(\FF_5)$         & \multicolumn{1}{c|}{$p\equiv \pm 2\mod 5$}     & $f(A)=0$ & $E_8+D_4+A_4+2A_2$     \\ \hline
\end{tabular}

\caption{The classification of generalised Kummer surfaces when $p\nmid |G|$.}
\label{table1}
\end{table}
\end{minipage}
\end{adjustbox}

The fact that neither of the order of the groups in Theorem \ref{th_rigid_actions} divides a prime $p\geq 7$ shows that the only cases left to prove are the ones in characteristics $p=2,3$ and $5$, when $p$ divides the order of $G$. In particular, the possible choices for $G$ and $p$ are 
\begin{itemize}
    \item If $p=2$, $G$ is $C_{2n}$ for $1\le n\le 6$, $Q_{4m}$ for $2\leq m\leq 6$, $\SL_2(\FF_3)$, $\ESL_2(\FF_3)$, $\SL_2(\FF_5)$,  $C_3\rtimes C_8$, or $C_3\times Q_8$.
    \item If $p=3$, $G$ is $C_3$, $C_6$, $Q_{12}$, $Q_{24}$, $\SL_2(\FF_3)$, $\ESL_2(\FF_3)$, $\SL_2(\FF_5)$, $C_3\rtimes C_8$, $C_3\times Q_8$, or $C_3\rtimes Q_{16}$.
    \item If $p=5$, $G$ is $C_5$, $C_{10}$, $\SL_2(\FF_5)$, $\ESL_2(\FF_5)$, or $C_5\rtimes C_8$.\\
\end{itemize}

We will now see how we can reduce the number of possible groups in this list by looking at the possible rational double points that the quotient variety can have.

\subsection{Rational double points in positive characteristic} \label{ss_rdp}
The main idea in this paper is that, if the quotient of a smooth surface by the group action of an étale group scheme $G$ only has rational double points, then this imposes some strong restrictions on what $G$ can be. To explain this phenomenon, we now recall the theory of rational double points, with particular emphasis on the differences between characteristic zero and positive characteristic.\\

There are several perspectives from which rational double points may be understood. From the point of view of their resolution, they are precisely the surface singularities whose exceptional divisors consist of $(-2)$-curves whose dual intersection graph is a simply laced Dynkin diagram of \textbf{ADE type}. On the other hand, if $P$ is a closed point of a normal surface $S$ defined over an algebraically closed field $k$, and $\widehat{\calO}_{P,S}$ denotes the completion of $\calO_{P,S}$, then, whenever $P$ is a rational double point, one can prove that there exists $f\in k[[x,y,z]]$ such that \begin{align*} \widehat{\calO}_{P,S}\cong k[[x,y,z]]/(f). \end{align*}

Over an algebraically closed field of characteristic zero, any two rational double points with the same desingularisation are locally analytically isomorphic. One can therefore describe rational double points locally by determining the possible choices of $f\in k[[x,y,z]]$. For each type of rational double point, $f$ may in fact be chosen to be a polynomial; these choices are referred to as \textbf{normal forms} \cite{Miyanishi2020AlgebraicCharacteristics}.\\

Artin showed that, in positive characteristic, the analogous statement fails in general: there may exist several rational double points with the same configuration of exceptional curves in their resolutions that are nevertheless not locally analytically isomorphic \cite{Artin1975CoveringsP}. However, for each ADE type, there are only finitely many such local isomorphism classes, which are usually distinguished by superscripts. For example, in every characteristic other than $2$ and $3$, an $E_6$ singularity is characterised by the normal form $f=z^2+x^3+y^4$. Nevertheless, in characteristic $3$ there are two non-isomorphic $E_6$ singularities: $E_6^0$, with normal form $f=z^2+x^3+y^4$, and $E_6^1$, with normal form $f=z^2+x^3+y^4+x^2y^2$.\\

Two rational double points with the same configuration of exceptional curves in their resolution may be distinguished by their \textbf{Tjurina number}, which is defined by
\begin{align*}
\tau(P)=\dim_k \frac{k[[x,y,z]]}{\langle f,\; \tfrac{\partial f}{\partial x},\; \tfrac{\partial f}{\partial y},\; \tfrac{\partial f}{\partial z}\rangle}.
\end{align*}
For instance, in characteristic three the $E_6^0$ singularity has Tjurina number 6, whereas the $E_6^1$ has Tjurina number 7.\\

Finally, another way of characterising rational double points is in terms of their local fundamental groups.
\begin{definition}
Let $\calO_{P,S}^h$ be the henselisation of $\calO_{P,S}$ and $X=\Spec \calO^h_{P,S}$. A \textbf{covering of $\boldsymbol{X}$} is a finite surjective map $Y\rightarrow X$ such that $Y$ is irreducible and normal. The \textbf{local fundamental group $\pi_1(X\setminus P)$} is the group which classifies finite coverings of $X$ that are étale except above $P$.
\end{definition}

Over the complex numbers, one way of characterising rational double points is to describe them as the singularities which are, locally, the quotient of the affine plane by the action of a finite group  $G<\SL_2(\mathbb{C})$. This can be shown to be equivalent to showing that the local fundamental group in a neighbourhood of the singularity is $G$ \cite{Prill1967LocalGroups}.\\

This characterisation is no longer true in positive characteristic, as there are rational double points whose local fundamental group is trivial. For instance, this is the case for the local fundamental group of a singularity of type $A_{p-1}$ in characteristic $p$. These singularities are therefore not locally the quotients of the affine plane by étale group schemes. Instead, they can be understood as quotients by the infinitesimal group scheme $\mu_p$. This perspective is further elaborated in Miyanishi's and Ito's book \cite{Miyanishi2020AlgebraicCharacteristics}.\\

In 1975, Artin classified all rational double points in positive characteristic in terms of their desingularisation, normal form, local fundamental group and Tjurina numbers \cite{Artin1975CoveringsP}. The results of this paper can be summarised in the following tables, where $G'$ denotes the maximal prime-to-$p$ quotient of $G$ and $D_{(2r-n)'}$ is the dihedral group of order $2m$ where $m$ is the greatest divisor of $(2r-n)$ which is not divisible by two.

\begin{table}[H]
\centering
\begin{tabular}{|cl|c|c|c|}
\hline
\multicolumn{2}{|c|}{\textbf{Type} } & \textbf{Normal form} & $\boldsymbol{\pi_1(X\setminus P)}$ & $\boldsymbol{\tau(P)}$ \\
\hline
\multicolumn{1}{|c}{$A_n$} &   $(n\text{ even})$ & $xy+z^{n+1} $ & $(C_n)'$ & $n$ \\\hline
\multicolumn{1}{|c}{$A_n$} &  $(n\text{ odd})$ & $xy+ z^{n+1}$ & $C_n$ & $n + 1$ \\\hline
\multicolumn{1}{|c}{$D_{2n}^0$} & $(n \geq 2)$ & $z^2 + x^2 y + x y^n$ & $0$ & $4n$ \\
\multicolumn{1}{|c}{$D_{2n}^r$} & $ (n \geq 2, 1 \leq r < \tfrac{n}{2})$ & $z^2 + x^2 y + x y^n + x y^{n - r} z$ & $0$ & $4n - 2r$ \\
\multicolumn{1}{|c}{$D_{2n}^r$} & $ (n \geq 2, r=\tfrac{n}{2})$ & $z^2 + x^2 y + x y^n + x y^{n - r} z$ & $C_2$ & $4n - 2r$ \\
\multicolumn{1}{|c}{$D_{2n}^r$} & $(n \geq 2, \tfrac{n}{2}< r \leq n-1)$ & $z^2 + x^2 y + x y^n + x y^{n - r} z$ & $D_{(2r-n)'}$ & $4n - 2r$ \\\hline
\multicolumn{1}{|c}{$D_{2n+1}^0$} & $(n \geq 2)$ & $z^2 + x^2 y + y^n z$ & $0$ & $4n$ \\
\multicolumn{1}{|c}{$D_{2n+1}^r$} & $(n \geq 2, 1 \leq r < \tfrac{n}{2})$ & $z^2 + x^2 y + y^n z + x y^{n - r} z$ & $0$ & $4n - 2r$ \\
\multicolumn{1}{|c}{$D_{2n+1}^r$} & $(n \geq 2, \tfrac{n}{2} < r \leq n-1)$ & $z^2 + x^2 y + y^n z + x y^{n - r} z$ & $D_{4r-2n+1}$ & $4n - 2r$ \\
\hline
\multicolumn{2}{|c|}{$E_6^0$} & $z^2 + x^3 + y^2 z$ & $C_3$ & $8$ \\
\multicolumn{2}{|c|}{$E_6^1$} & $z^2 + x^3 + y^2 z + xyz$ & $C_6$ & $6$ \\
\hline
\multicolumn{2}{|c|}{$E_7^0$} & $z^2 + x^3 + x y^3$ & $0$ & $14$ \\
\multicolumn{2}{|c|}{$E_7^1$} & $z^2 + x^3 + x y^3 + y^4 z$ & $0$ & $12$ \\
\multicolumn{2}{|c|}{$E_7^2$} & $z^2 + x^3 + x y^3 + x^2 y z$ & $0$ & $10$ \\
\multicolumn{2}{|c|}{$E_7^3$} & $z^2 + x^3 + x y^3 + xyz$ & $C_4$ & $8$ \\
\hline
\multicolumn{2}{|c|}{$E_8^0$} & $z^2 + x^3 + y^5$ & $0$ & $16$ \\
\multicolumn{2}{|c|}{$E_8^1$} & $z^2 + x^3 + y^5 + xyz$ & $0$ & $14$ \\
\multicolumn{2}{|c|}{$E_8^2$} & $z^2 + x^3 + y^5 + y^3 z$ & $C_2$ & $12$ \\
\multicolumn{2}{|c|}{$E_8^3$} & $z^2 + x^3 + y^5 + y^2 z$ & $0$ & $10$ \\
\multicolumn{2}{|c|}{$E_8^4$} & $z^2 + x^3 + y^5 + xyz$ & $Q_{12}$ & $8$ \\
\hline
\end{tabular}

\caption{Possible rational double points in characteristic two.}
\end{table}

\begin{table}[h!]
\centering
\begin{tabular}{|cl|c|c|c|}
\hline
\multicolumn{2}{|c|}{\textbf{Type}} & \textbf{Normal form} & $\boldsymbol{\pi_1(X\setminus P)}$ & $\boldsymbol{\tau(P)}$ \\
\hline
\multicolumn{1}{|c}{$A_n$} & $(3 \nmid (n+1))$ & $z^2 + x^2 + y^{n+1}$ & $C_{n+1}$ & $n$ \\\hline
\multicolumn{1}{|c}{$A_n$} & $\left(3 \mid (n+1)\right)$ & $z^2 + x^2 + y^{n+1}$ & $(C_{n+1})'$ & $n + 1$ \\
\hline
\multicolumn{1}{|c}{$D_n$} & $ (n \geq 4)$ & $z^2 + x^2 y + y^{n-1}$ & $(Q_{4n-8})'$ & $n$ \\
\hline
\multicolumn{2}{|c|}{$E_6^0$} & $z^2 + x^3 + y^4$ & $0$ & $6$ \\
\multicolumn{2}{|c|}{$E_6^1$} & $z^2 + x^3 + y^4 + x^2 y^2$ & $C_3$ & $7$ \\
\hline
\multicolumn{2}{|c|}{$E_7^0$} & $z^2 + x^3 + x y^3$ & $C_2$ & $9$ \\
\multicolumn{2}{|c|}{$E_7^1$} & $z^2 + x^3 + x y^3 + x^2 y^2$ & $C_6$ & $7$ \\
\hline
\multicolumn{2}{|c|}{$E_8^0$} & $z^2 + x^3 + y^5$ & $0$ & $12$ \\
\multicolumn{2}{|c|}{$E_8^1$} & $z^2 + x^3 + y^5 + x^2 y^3$ & $0$ & $10$ \\
\multicolumn{2}{|c|}{$E_8^2$} & $z^2 + x^3 + y^5 + x^2 y^2$ & $\SL_2(\mathbb{F}_3)$ & $8$ \\
\hline
\end{tabular}

\caption{Possible rational double points in characteristic three.}
\end{table}

\begin{table}[h!]
\centering
\begin{tabular}{|cl|c|c|c|}
\hline
\multicolumn{2}{|c|}{\textbf{Type}} & \textbf{Normal form} & $\boldsymbol{\pi_1(X\setminus P)}$ & $\boldsymbol{\tau(P)}$ \\
\hline
\multicolumn{1}{|c}{$A_n$} & $\left(5 \nmid (n+1)\right)$ & $z^2 + x^2 + y^{n+1}$ & $C_{n+1}$ & $n$ \\\hline
\multicolumn{1}{|c}{$A_n$} & $\left(5 \mid (n+1)\right)$ & $z^2 + x^2 + y^{n+1}$ & $(C_{n+1})'$ & $n + 1$ \\
\hline
\multicolumn{1}{|c}{$D_n$} & $(n \geq 4)$ & $z^2 + x^2 y + y^{n-1}$ & $(Q_{4n-8})'$ & $n$ \\
\hline
\multicolumn{2}{|c|}{$E_6$} & $z^2 + x^3 + y^4$ & $\SL_2(\mathbb{F}_3)$ & $6$ \\
\hline
\multicolumn{2}{|c|}{$E_7$} & $z^2 + x^3 + x y^3$ & $\ESL_2(\mathbb{F}_3)$ & $7$ \\
\hline
\multicolumn{2}{|c|}{$E_8^0$} & $z^2 + x^3 + y^5$ & $0$ & $10$ \\
\multicolumn{2}{|c|}{$E_8^1$} & $z^2 + x^3 + y^5 + x y^4$ & $C_5$ & $8$ \\
\hline
\end{tabular}
\caption{Possible rational double points in characteristic five.}
\end{table}
\vspace{5pt}

\begin{table}[h!]
\centering
\begin{tabular}{|cl|c|c|c|}
\hline
\multicolumn{2}{|c|}{\textbf{Type}} & \textbf{Normal form} & $\boldsymbol{\pi_1(X\setminus P)}$ & $\boldsymbol{\tau(P)}$ \\
\hline
\multicolumn{1}{|c}{$A_n$} & $(p \nmid (n+1))$ & $z^2 + x^2 + y^{n+1}$ & $C_{n+1}$ & $n$ \\\hline
\multicolumn{1}{|c}{$A_n$} & $(p \mid (n+1))$ & $z^2 + x^2 + y^{n+1}$ & $(C_{n+1})'$ & $n + 1$ \\
\hline
\multicolumn{1}{|c}{$D_n$} & $(n \geq 4)$ & $z^2 + x^2 y + y^{n-1}$ & $(Q_{4n-8})'$ & $n$ \\
\hline
\multicolumn{2}{|c|}{$E_6$} & $z^2 + x^3 + y^4$ & $\SL_2(\mathbb{F}_3)$ & $6$ \\
\hline
\multicolumn{2}{|c|}{$E_7$} & $z^2 + x^3 + x y^3$ & $\ESL_2(\mathbb{F}_3)$ & $7$ \\
\hline
\multicolumn{2}{|c|}{$E_8$} & $z^2 + x^3 + y^5$ & $\SL_2(\mathbb{F}_5)$ & $8$ \\
\hline
\end{tabular}

\caption{Possible rational double points in characteristic $p\geq7$.}
\end{table}
\newpage
The following result will allow us to use the information in these tables to study the singularities of quotients.
\begin{lemma} \label{prop_fund}
Let $G$ be a finite group acting rigidly on an abelian surface $A$ and let $\varphi: A\rightarrow A/G$ be the quotient map. Let $P$ be a point of $A$ such that $\Stab_G(P)$ is a normal subgroup of $G$ and let $U=\Spec(\mathcal{O}^h_{\varphi(P),A/G})$. Then, $\Stab_G(P)$ is isomorphic to a subgroup of $\pi_1(U\setminus \varphi(P))$.
\end{lemma}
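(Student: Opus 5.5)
The plan is to show that the local cover of $U$ coming from $A$ is, in fact, a universal cover of $U\setminus Q$, where $Q=\varphi(P)$. This gives $\pi_1(U\setminus Q)\cong \Stab_G(P)$, which in particular realises $\Stab_G(P)$ as a subgroup. Write $H=\Stab_G(P)$ and $V=\Spec(\calO^h_{P,A})$.

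\textbf{Step 1: identify $U$ with $V/H$.} I would first show that $U\cong V/H$, i.e. $\calO^h_{Q,A/G}\cong (\calO^h_{P,A})^H$. This follows from the standard fact that forming quotients by finite groups commutes with flat (in particular étale) base change. Concretely:
\begin{itemize}
\item Base change $\varphi$ to $U$. The fibre of $A\times_{A/G}U$ over the closed point is the orbit $G\cdot P$.
\item Since $U$ is henselian, $A\times_{A/G}U$ splits as a disjoint union of local henselian schemes, one for each point of $G\cdot P$.
\item $G$ permutes these components transitively, and the stabiliser of the component through $P$ is $H$. That component is $V$.
\item Taking invariants therefore gives $U=(A\times_{A/G}U)/G\cong V/H$.
\end{itemize}
The hypothesis that $H$ is normal in $G$ makes this bookkeeping harmless: the stabilisers of all points of the orbit coincide, so the same group $H$ acts on each component. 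This identification of henselisations is the step I expect to need the most care.

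\textbf{Step 2: $V\setminus P\to U\setminus Q$ is a finite étale Galois cover with group $H$.} By rigidity, each nontrivial $h\in H$ has only finitely many fixed points on $A$. Hence on the local scheme $V$, every nontrivial $h$ fixes only the closed point $P$. The inertia groups at all points of $V\setminus P$ are therefore trivial, and $V\setminus P\to U\setminus Q$ is finite étale with Galois group $H$. Moreover $V\setminus P$ is connected, because $V$ is normal and local, hence irreducible. This cover gives a surjection $\pi_1(U\setminus Q)\twoheadrightarrow H$ whose kernel is $\pi_1(V\setminus P)$.

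\textbf{Step 3: $V\setminus P$ is simply connected.} The scheme $V$ is regular of dimension $2$, since $A$ is smooth. By Zariski--Nagata purity of the branch locus, every finite étale cover of $V\setminus P$ extends to a finite étale cover of $V$. Now $V$ is henselian with algebraically closed residue field $k$, so $\pi_1(V)=\pi_1(\Spec k)=1$. Hence $\pi_1(V\setminus P)=1$.

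\textbf{Conclusion.} The exact sequence
\[1\to\pi_1(V\setminus P)\to\pi_1(U\setminus Q)\to H\to1\]
then yields $\pi_1(U\setminus Q)\cong H$. Unwinding this, every connected cover of $U$ that is étale away from $Q$ is dominated by $V$. In particular, $H=\Stab_G(P)$ is isomorphic to a subgroup, indeed the whole, of $\pi_1(U\setminus\varphi(P))$, which is the statement of the lemma.
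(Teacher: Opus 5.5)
Your argument is correct, and it proves more than the lemma asks: you obtain $\pi_1(U\setminus\varphi(P))\cong\Stab_G(P)$, not just a containment.

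Your Steps 1 and 2 match the paper's proof in substance, but you reach the local identification differently. The paper uses normality to factor $\varphi$ through $A/\Stab_G(P)$. It then notes that the induced action of $G/\Stab_G(P)$ is free near the image of $P$. Hence $A/\Stab_G(P)\to A/G$ is \'etale there, and the two henselisations, and so the two local fundamental groups, coincide. You instead identify $\calO^h_{\varphi(P),A/G}$ with $(\calO^h_{P,A})^{H}$ directly, using flat base change of invariants and the splitting of finite algebras over a henselian base. Your route does not need the normality hypothesis at all.

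The genuine difference is your Step 3. The paper stops once it has the connected $H$-Galois cover of the punctured henselian neighbourhood, \'etale away from $P$, and from it asserts $\Stab_G(P)\leq\pi_1(U\setminus\varphi(P))$. Such a cover by itself only gives a surjection $\pi_1(U\setminus\varphi(P))\twoheadrightarrow\Stab_G(P)$. You apply Zariski--Nagata purity to the regular, strictly henselian scheme $V$ (strictly henselian because $k$ is algebraically closed) to get $\pi_1(V\setminus P)=1$. That is exactly what upgrades the surjection to an isomorphism, so it rigorously justifies the containment used in Proposition~\ref{primegroups_propo}.

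The cost is invoking purity and the compatibility of henselisation with finite morphisms. The gain is the sharper equality, which would even shorten later arguments. For instance, in Corollary~\ref{three-five_cor} only $E_6^1$ (for $p=3$) and $E_8^1$ (for $p=5$) would remain as candidate singularities.
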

\begin{proof}
The map $\varphi: A \rightarrow A/G$ is finite, and since the action is rigid, it is étale away from the fixed points of the action of $G$. Now, as $\Stab_G(P)$ is a normal subgroup of $G$, the quotient $\varphi$ factors as
\begin{align*}
A \xrightarrow{\phi} A/\Stab_G(P) \xrightarrow{\psi} A/G
\end{align*}
where $\psi$ is the quotient of $A/\Stab_G(P)$ by the induced action of the group $G/\Stab_G(P)$.\\

 Therefore, we have the sequence of ring homomorphisms
 \begin{align*}
 \calO_{\pi(P),A/G}\rightarrow \calO_{\phi(P),A/\Stab_G(P)}\rightarrow\calO_{P,A}
 \end{align*}
 and applying the universal property of henselisation and taking spectra, we obtain
 \begin{align*}
 \Spec (\calO_{P,A}^h) \rightarrow \Spec (\calO^h_{\phi(P),A/\Stab_G(P)})\rightarrow \Spec (\calO_{\varphi(P),A/G}^h)\\[-5pt]
 \end{align*}
 Now, as the map $\psi$ is given by the quotient of $A/\Stab_G(Q)$ by the group action induced on $A$ by $G$, we deduce that $G$ acts freely in a neighbourhood of $\phi(P)$ and the map 
 \begin{align*}
\Spec (\calO^h_{\phi(P),A/\Stab_G(P)})\rightarrow \Spec (\calO_{\varphi(P),A/G}^h)
 \end{align*} is étale.
 As a consequence, 
\begin{align*}
\pi_1(U\setminus\varphi(P))=\pi_1(\Spec (\calO^h_{\phi(P),A/\Stab_G(P)})\setminus\phi(P)).\\[-5pt]
\end{align*}
Every element $g\in\Stab_G(P)$ fixes $\phi(P)$ and therefore induces a finite covering of $U$ that is étale except above $\phi(P)$. Thus, we conclude that 
\begin{equation*}
\Stab_G(P)\leq \pi_1(\Spec (\calO^h_{\phi(P),A/\Stab_G(P)})\setminus\phi(P))=\pi_1(U\setminus \varphi(P)). \qedhere
\end{equation*}
\end{proof}

\newpage
Combining the classification of groups in Theorem \ref{th_rigid_actions} and the local information from Lemma \ref{prop_fund}, we obtain the following result:

\begin{proposition} \label{primegroups_propo}
    Let $A$ be an abelian surface over a field of characteristic $p$, and let $G$ be a finite group acting rigidly and symplectically on $A$ such that $p \mid |G|$ and the singularities of $A/G$ are all rational double points. Then, \vspace{0pt}
\begin{itemize}
    \item If $p=2$, $G$ can only be $C_2$, $C_4$, $C_6$ or $Q_{12}$.
    \item If $p=3$, $G$ can only be $C_3$, $C_6$ or $\SL_2(\mathbb{F}_3)$.
    \item If $p=5$, $G$ can only be $C_5$.
\end{itemize}
\end{proposition}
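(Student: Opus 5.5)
The plan is to go through the list of candidate pairs $(G,p)$ given after Theorem \ref{classification_theorem} and, for each one, find a point $P$ whose stabiliser must embed in a local fundamental group. The origin $0\in A$ is the natural choice, since it is fixed by all of $G$. Its stabiliser $\Stab_G(0)=G$ is trivially normal, so Lemma \ref{prop_fund} gives an embedding $G\hookrightarrow \pi_1(U\setminus\varphi(0))$. By hypothesis $\varphi(0)$ is a rational double point, so this local fundamental group appears in Artin's table for characteristic $p$. A group $G$ can therefore survive only if it embeds into some group in the $\pi_1$ column of the table for $p$. The core of the proof is to compare the groups of Theorem \ref{th_rigid_actions} with these columns.

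For $p=2$, the groups in the characteristic-two table are $0$, $C_n$ for $n$ odd (the prime-to-$2$ part of $C_n$ in the $A_n$ rows), $C_n$ for $n$ odd in the other $A_n$ row, $C_2$, the dihedral groups $D_m$ with $m$ odd, $C_3$, $C_6$, $C_4$ and $Q_{12}$.
\begin{itemize}
\item $C_2$ embeds in $C_2$, and also in $C_6$ and $D_m$.
\item $C_4$ embeds in the $C_4$ of $E_7^3$ (and also in $Q_{12}$).
\item $C_6$ embeds in $E_6^1$, and $Q_{12}$ embeds in $E_8^4$.
\end{itemize}
Every other candidate fails. The groups $C_8$, $C_{10}$, $C_{12}$, $Q_8$, $Q_{16}$, $Q_{20}$, $Q_{24}$, $\SL_2(\FF_3)$, $\ESL_2(\FF_3)$, $\SL_2(\FF_5)$, $C_3\rtimes C_8$ and $C_3\times Q_8$ have order not dividing that of any listed group, or contain $Q_8$, $C_8$ or an element of order $5$. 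The dihedral groups $D_m$ with $m$ odd have Sylow $2$-subgroup $C_2$, so they contain neither $C_4$ nor $Q_8$. The remaining groups have order at most $12$, so a size-and-structure check suffices. One point needs care: $C_{12}$ does not embed in $Q_{12}$, since $Q_{12}$ has no element of order $12$. Similarly $C_{10}$ is excluded because no listed group has order divisible by $5$.

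For $p=3$, the characteristic-three groups are the $C_{n+1}$ with $3\nmid n+1$, the prime-to-$3$ parts $(C_{n+1})'$, the groups $(Q_{4n-8})'$, $C_3$, $C_2$, $C_6$ and $\SL_2(\FF_3)$. For $D_n$ with $3\mid |Q_{4n-8}|$, the group $(Q_{4n-8})'$ is the maximal prime-to-$3$ quotient, which has order prime to $3$. Hence the only listed groups with order divisible by $3$ are $C_3$, $C_6$ and $\SL_2(\FF_3)$.
\begin{itemize}
\item The candidates $C_3$, $C_6$ and $\SL_2(\FF_3)$ embed into these.
\item $Q_{12}$ and $Q_{24}$ have elements of order $4$ and $3$, so they could only land in $\SL_2(\FF_3)$. $Q_{12}$ does not embed there because $\SL_2(\FF_3)$ has no subgroup of order $12$, and $Q_{24}$ is too large.
\item $\ESL_2(\FF_3)$, $\SL_2(\FF_5)$ and the three groups $C_3\rtimes C_8$, $C_3\times Q_8$, $C_3\rtimes Q_{16}$ are too big or contain $C_8$ or $C_{12}$.
\end{itemize}

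For $p=5$, the only listed group of order divisible by $5$ is the $C_5$ of $E_8^1$. The prime-to-$5$ parts and $(Q_{4n-8})'$ have order prime to $5$, while $\SL_2(\FF_3)$ and $\ESL_2(\FF_3)$ have orders $24$ and $48$. Hence only $G=C_5$ survives, and $C_{10}$, $\SL_2(\FF_5)$, $\ESL_2(\FF_5)$ and $C_5\rtimes C_8$ are excluded.

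The main point requiring care is interpreting the groups $(\,\cdot\,)'$ in Artin's tables, which are maximal prime-to-$p$ quotients. I must verify that each has order prime to $p$; for $(Q_{4n-8})'$ in characteristic $3$ and $5$, the maximal prime-to-$p$ quotient of a binary dihedral group has order prime to $p$ by definition. This is what makes the divisibility argument clean. The remaining checks that $Q_{12}\not\le\SL_2(\FF_3)$ and $C_{12}\not\le Q_{12}$ are routine.
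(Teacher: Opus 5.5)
Your strategy is the same as the paper's: apply Lemma \ref{prop_fund} at the origin, whose stabiliser is all of $G$, and compare the groups of Theorem \ref{th_rigid_actions} with the $\pi_1$ column of Artin's tables. The problem is in the case-by-case exclusions, two of which rest on false statements.

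The substantive error is $C_{10}$ in characteristic $2$. You exclude it on the grounds that no listed group has order divisible by $5$, but your own enumeration contradicts this. The $A_n$ rows contain $C_5$, and the $D$ rows contain dihedral groups $D_m$ with $m$ odd; for example, the $D_9^3$ row ($n=4$, $r=3$) gives $D_{4r-2n+1}=D_5$, of order $10$. So neither of your general criteria applies: $10$ does divide the order of a listed group, and containing an element of order $5$ rules nothing out. Moreover, $C_{10}$ is the only candidate of order $\equiv 2 \pmod 4$ that has to be excluded. It is therefore exactly the case where the dihedral family matters, and your Sylow argument (no $C_4$ or $Q_8$ inside $D_m$) does not reach it. The correct reason is that no listed group contains an element of order $10$. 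Odd cyclic groups have only elements of odd order. Every element of $D_m$ with $m$ odd is either a rotation of odd order or an involution. Finally, $C_2$, $C_3$, $C_4$, $C_6$ and $Q_{12}$ have order prime to $5$.

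The second slip is in characteristic $3$: $Q_{24}$ is not \emph{too large}, since $|Q_{24}|=24=|\SL_2(\FF_3)|$. You need $Q_{24}\not\cong\SL_2(\FF_3)$. This holds because $Q_{24}$ contains a cyclic subgroup of order $12$, while $\SL_2(\FF_3)$ has no subgroup of order $12$ (the same fact you used for $Q_{12}$). That fact also disposes of $C_{12}$, which has order divisible by $3$ but is missing from your characteristic-$3$ enumeration (as it is from the list after Theorem \ref{classification_theorem}).

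All of these problems disappear if you organise the check as the paper does. Instead of testing candidates one at a time, list the subgroups of order divisible by $p$ of the finitely many local fundamental groups of order divisible by $p$, and intersect with Theorem \ref{th_rigid_actions}. For $p=3$, these are the subgroups $C_3$, $C_6$ and $\SL_2(\FF_3)$ of $\SL_2(\FF_3)$. For $p=2$, take the even-order subgroups of $C_2$, $C_4$, $C_6$, $Q_{12}$ and of the $D_m$. Those coming from the $D_m$ are either $C_2$ or non-abelian of order $\equiv 2\pmod 4$, and the latter do not occur in Theorem \ref{th_rigid_actions}.
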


\begin{proof}
Since $G \hookrightarrow \End(A)^\times$, every element $g \in G$ fixes the identity element $O$ of $A$. By Lemma \ref{prop_fund}, the local fundamental group of a punctured neighbourhood around the image of $O$ in $A/G$ must contain $G$ as a subgroup. Thus, $G$ must be a subgroup of one of the local fundamental groups listed in the tables of Subsection \ref{ss_rdp} for rational double point singularities in characteristic $p$.\\

In characteristic $2$, the only local fundamental groups whose orders are divisible by two are $C_2$, $C_4$, $C_6$, $Q_{12}$ and $D_m$ for $m$ odd. The only groups among those listed in Theorem \ref{th_rigid_actions} that are subgroups of any of these are precisely $C_2$, $C_4$, $C_6$ and $Q_{12}$.\\

In characteristic $3$, the only possible local fundamental groups of order divisible by $3$ are $C_3$, $C_6$, and $\SL_2(\mathbb{F}_3)$, all of which appear in Theorem \ref{th_rigid_actions}.\\

In characteristic $5$, the only possibility consistent with Theorem~\ref{th_rigid_actions} is $C_5$. Furthermore, in this case, Rybakov has shown that $A$ must be supersingular \cite[Corollary 5.4]{Rybakov2024GeneralizedFields}.
\end{proof}

Now we have a list of possible groups that can act on abelian surfaces; let us further analyse which possibilities can happen and what the possible singularities of the quotients are.\\

\section{The singularities of the quotient} \label{computingsings_section}

A useful point of view to study an action of a group $G$ on an abelian variety $A$ is to study the induced action of $G$ on the $\ell$-adic Tate module of $A$. If $p\nmid \lvert G\rvert$, for instance, we have the following result:
\begin{proposition}[{\cite[Proposition 3.3]{Rybakov2024GeneralizedFields}}] \label{rigid_propo}
Let $G$ act on an abelian variety $A$ over a field of characteristic $p$, and let $\ell \neq p$ and $p\nmid \lvert G\rvert$. The following are equivalent: 
\begin{itemize}
    \item The action of $G$ on $A$ is \textbf{rigid} which, as we saw, meant that every non-trivial element $g \in G$ has only finitely many fixed points.
    \item The representation of $\,G$ in $V_\ell(A)$ is \textbf{without fixed points}, i.e., for any $g\in G$ of order $n$ the eigenvalues of the action of $g$ on $V_\ell(A)$ are primitive $n$-th roots of unity.
\end{itemize}
\end{proposition}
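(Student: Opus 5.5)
Although this is cited from Rybakov, I would prove it directly with the Lefschetz fixed point formula for endomorphisms of abelian varieties. For an endomorphism $g$ of $A$ (here $\dim A=g_A$), the degree of $1-g$ equals the characteristic polynomial of $g$ on $V_\ell(A)$ evaluated at $1$:
\[
\deg(1-g)=\det\bigl(1-g \mid V_\ell(A)\bigr),
\]
valid for any $\ell\neq p$. The fixed locus of $g$ is the kernel of $1-g$. That kernel is finite exactly when $1-g$ is an isogeny, that is, when $\det(1-g\mid V_\ell(A))\neq 0$, which happens exactly when $1$ is not an eigenvalue of $g$ on $V_\ell(A)$. Since $g$ fixes $O$, the kernel is always non-empty. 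So rigidity of the action becomes: for every non-trivial $g\in G$, $1$ is not an eigenvalue of $\rho_\ell(g)$.

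Next I translate this into the eigenvalue condition. Assume the representation is without fixed points in the sense of the statement. For $g$ of order $n>1$, every eigenvalue of $g$ is a primitive $n$-th root of unity, hence different from $1$, so the action is rigid.

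For the converse, take $g$ of order $n$ and an eigenvalue $\zeta$ of $\rho_\ell(g)$. Because $g^n=1$ and $V_\ell$ is a faithful $\Q_\ell[G]$-module, $g$ acts semisimply and $\zeta$ is an $n$-th root of unity of some exact order $d\mid n$. If $d<n$, then $h=g^d$ is non-trivial and has $\zeta^d=1$ as an eigenvalue. By the first paragraph, $h$ then has infinitely many fixed points, contradicting rigidity. Hence $d=n$ for every eigenvalue, which is the claimed condition. Applying this to all $g$ also shows that no nonzero vector is fixed by a non-trivial element, so the representation is without fixed points in the sense of the earlier definition.

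The main obstacle is justifying that finiteness of $\ker(1-g)$ is equivalent to $1$ not being an eigenvalue on $V_\ell$, uniformly in $p$. If $1-g$ is not an isogeny, its kernel contains a positive-dimensional abelian subvariety, so it has infinitely many points. If it is an isogeny, its kernel is a finite group scheme whose reduced points are finite. The degree formula $\deg(\phi)=\det(\phi\mid V_\ell)$ for $\ell\ne p$ is standard (Mumford, \S19). The hypothesis $p\nmid|G|$ is what guarantees semisimplicity and, more importantly, that the fixed-point scheme is reduced, so that the scheme-theoretic and set-theoretic notions of ``finitely many fixed points'' agree. I expect this to be exactly why the $p\mid|G|$ case needs the separate treatment announced for this section.
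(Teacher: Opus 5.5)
Your proof is correct. There is no proof in the paper to compare it with: the statement is quoted from Rybakov and used as a black box. The nearest argument in the paper is its proof of the $\ell=p$ analogue immediately afterwards, and that proof only goes one way. If $g$ of order $n$ had an eigenvalue of exact order $r<n$, then $g^r$ would fix a nonzero $v\in T_p(A)$. It would therefore fix every torsion point $v_j\in A[p^j]$ of the corresponding compatible system, giving infinitely many fixed points.

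Your reduction from an eigenvalue of exact order $d<n$ to the non-trivial element $g^d$ is the same step. You then close the loop with the identity $\deg(1-g)=\det(1-g\mid V_\ell(A))$, with $\deg=0$ for non-isogenies. This turns ``$\ker(1-g)$ is finite'' into ``$1$ is not an eigenvalue'' in both directions at once. That is exactly what is lost at $\ell=p$: $V_p(A)$ has dimension $f(A)\le\dim A$ and need not see all of $A$. For instance, $g=(-1,1)$ on $E_1\times E_2$, with $E_1$ ordinary and $E_2$ supersingular, acts on $V_p$ by $-1$, yet fixes the infinite set $E_1[2]\times E_2$. So for $\ell=p$ only the implication the paper proves is available.

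Your closing paragraph misattributes the role of the hypothesis $p\nmid|G|$: your argument never uses it.
\begin{itemize}
\item Semisimplicity of $V_\ell(A)$ holds by Maschke's theorem for any finite $G$. For a single $g$ it is immediate because $x^n-1$ is separable over $\Q_\ell$; faithfulness plays no role.
\item Finiteness of the group scheme $\ker(1-g)$ is a topological property, so reducedness is irrelevant.
\end{itemize}
The equivalence therefore holds for every finite $G\subset\End(A)^\times$ as long as $\ell\neq p$; the hypothesis is a standing assumption in Rybakov's setting, not an ingredient. The paper treats $p\mid|G|$ separately for a different reason than the one you guess. Rigidity is no harder to detect there. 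The issue is that the fixed points of elements of $p$-power order lie in $A[p]$, which $V_\ell$ cannot count. That is what forces the paper to use $T_p(A)$ and to assume $f(A)>0$.
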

We would like to establish a similar result in the case $\ell=p$. However, this is not possible in complete generality: for instance, if $A$ is supersingular, then $V_p(A)$ is trivial. As we will see in the next section, if $A$ is a supersingular abelian surface and $p\mid |G|$, then the quotient $A/G$ can never be a generalised Kummer surface. For this reason, throughout this section we assume that $A$ is an abelian surface over a field of characteristic $p$ that is not supersingular. For abelian surfaces, this is equivalent to requiring that the $p$-rank $f(A)$ be nonzero. Under this assumption, the $p$-adic Tate module $T_p(A)$ is a free $\mathbb{Z}_p$-module of rank $f(A)$. In this setting, we can still obtain an analogue of Proposition \ref{rigid_propo}:

\begin{proposition}
Let $A$ be an abelian surface in characteristic $p$ that is not supersingular, and suppose $G$ acts rigidly on $A$. Then the representation of $G$ on $V_p(A)$ is without fixed points.
\end{proposition}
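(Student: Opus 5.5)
The plan is to argue by contradiction, turning a fixed vector in $V_p(A)$ into infinitely many fixed points of a non-trivial element of $G$ on $A$. This is the same mechanism that underlies Proposition \ref{rigid_propo} for $\ell\neq p$. The only difference is that $T_p(A)$ only sees the $k$-points of the $p$-power torsion, that is, the étale part. That is harmless here, because rigidity is a statement about fixed $k$-points.

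First, I fix conventions. Since $A$ is not supersingular, $f(A)\geq 1$. Then $T_p(A)=\varprojlim_n A[p^n](k)$ is a free $\mathbb{Z}_p$-module of rank $f(A)$, and $V_p(A)=T_p(A)\otimes_{\mathbb{Z}_p}\mathbb{Q}_p$ is non-zero. Each $g\in G$ is an automorphism of $A$ preserving the group law, so it maps $A[p^n](k)$ to itself. These maps are compatible with the transition maps (multiplication by $p$), so $G$ acts $\mathbb{Z}_p$-linearly on $T_p(A)$ and hence on $V_p(A)$. To show the representation is without fixed points, I must show: for every non-trivial $g\in G$ and every non-zero $v\in V_p(A)$, $g v\neq v$.

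Suppose instead that $g\neq 1$ and $0\neq v\in V_p(A)$ satisfy $gv=v$. I first reduce to a primitive vector of the lattice. Write $v=p^{-m}w$ with $w\in T_p(A)$. Since $T_p(A)$ is torsion-free, $p^m(gw-w)=0$ forces $gw=w$. Because $T_p(A)$ is free of finite rank and $w\neq 0$, I can divide $w$ by the largest power of $p$ dividing it, again using torsion-freeness to keep $g$-invariance. So I may assume $w\in T_p(A)\setminus pT_p(A)$ with $gw=w$.

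Now write $w=(x_n)_{n\geq1}$ with $x_n\in A[p^n](k)$ and $px_{n+1}=x_n$. Since $w\notin pT_p(A)$, each $x_n$ has exact order $p^n$: if $p^{n-1}x_n=0$, then $x_n$ would lie in $A[p^{n-1}]$, and using surjectivity of $p$ on $A(k)$ one checks that $w\in pT_p(A)$. The invariance $gw=w$ means $g(x_n)=x_n$ for all $n$. Hence $g$ fixes the infinitely many pairwise distinct points $x_1,x_2,\dots$ of $A(k)$. This contradicts rigidity, which says every non-trivial element of $G$ has only finitely many fixed points. Equivalently, $1-g$ is a non-zero endomorphism with finite kernel, because $g\neq 1$ and the action is rigid, and it cannot contain all the $x_n$.

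The only point needing care, and the step I expect to be the main obstacle, is the passage from a fixed vector in $V_p(A)$ to a compatible system of points of exact order $p^n$. This is exactly the primitivity argument above, and it relies on $T_p(A)$ being free so that a maximal power of $p$ dividing $w$ exists. If one also wants the eigenvalue reformulation of Proposition \ref{rigid_propo}, apply the conclusion to every power $g^k\neq 1$. This shows no eigenvalue of $g$ on $V_p(A)$ is a $k$-th root of unity for $k<n$. Since $g^n=1$, the eigenvalues are therefore primitive $n$-th roots of unity.
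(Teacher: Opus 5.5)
Your proof is correct and follows essentially the same route as the paper: a non-zero $g$-fixed vector in $T_p(A)$ gives a compatible system of $g$-fixed $p$-power torsion points, which are infinitely many and so contradict rigidity. The paper phrases this through an eigenvalue $\zeta_r$ with $r<n$, so that $g^r$ has eigenvalue $1$; you instead work directly with the fixed-vector definition and add a primitivity step that makes explicit why the points $x_n$ are pairwise distinct, a detail the paper leaves implicit.
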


\begin{proof}
Suppose that the representation of $G$ on $V_p(A)$ was not fixed-point free. Then there would be an element $g \in G$ of order $n$ and a primitive $r$-th root of unity $\zeta_r$ with $r < n$ such that $\zeta_r$ is an eigenvalue of $g$. In this case, the action of $g^r$ on $V_p(A)$ would have $1$ as an eigenvalue, implying that $g^r$ fixes a non-zero vector $v \in T_p(A)$. But then, for each $j \geq 1$, the torsion point $v_j \in A[p^j]$ corresponding to the compatible system defining $v$ would also be fixed by $g^r$. This shows that there would be infinitely many fixed points, and it leads to a contradiction.
\end{proof}

Let $g\in G$ be an element of order $n$ acting rigidly. Whenever $p\nmid n$ or $f(A)>0$, from the above propositions we deduce that the representation of $G$ in $V_p(A)$ is without fixed points. Therefore, the Tate module $T_p(A)$ is a free module over the discrete valuation ring $\Z_p[\zeta_n]$, where $\zeta_n$ acts as $g$. Therefore, we can generalise Lemma 5.2 of Rybakov's paper to the following result:

\begin{proposition}\label{prime_sing_prop}
Let $G$ be a finite group acting rigidly on an abelian variety $A$ and let $\ell$ be a prime. If $\ell=p$, assume that $f(A)>0$.
\begin{enumerate}
        \item If $P\in A$ is a point fixed by an element $g \in G$ of order $\ell^r$, then $P\in A[\ell](\bar{k})$.
        \item If $G=C_{\ell^r}$, the set of fixed points of $G$ 
        is a subgroup of $A[\ell](\bar{k})$ of order $\ell^{m}$, where \begin{align*}
        m=\frac{\log_\ell(\#A[\ell])}{(\ell-1)\ell^{r-1}}.\\
        \end{align*}
       \end{enumerate}
\end{proposition}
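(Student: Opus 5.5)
Write $g$ for a generator of the cyclic group $\langle g\rangle$ of order $\ell^r$. The argument rests on the structure of $T_\ell(A)$ as a module over $R=\Z_\ell[\zeta_{\ell^r}]$, exactly as in Rybakov's Lemma 5.2. The $\ell=p$ case is handled by the preceding proposition: since $f(A)>0$, the representation on $V_p(A)$ is without fixed points.

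By the preceding propositions, $g$ acts on $V_\ell(A)$ with only primitive $\ell^r$-th roots of unity as eigenvalues. Hence the minimal polynomial of $g$ divides the cyclotomic polynomial $\Phi_{\ell^r}$, and $T_\ell(A)$ becomes a torsion-free, hence free, module over the discrete valuation ring $R$, with $g$ acting as $\zeta=\zeta_{\ell^r}$. Its rank is $s=\rank_{\Z_\ell}T_\ell(A)/[R:\Z_\ell]$, where the $\Z_\ell$-rank is $4$ if $\ell\ne p$ and $f(A)$ if $\ell=p$. Since $\ell$ is totally ramified in $R$ with uniformiser $\pi=1-\zeta$ and $[R:\Z_\ell]=(\ell-1)\ell^{r-1}$, we have $\#A[\ell]=\ell^{\rank T_\ell}$. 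Therefore $s=\log_\ell(\#A[\ell])/((\ell-1)\ell^{r-1})$, which is the claimed $m$.

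The fixed points of $g$ form the kernel of the endomorphism $1-g$. It is a finite group because the action is rigid, so $1-g$ is an isogeny. Its $\ell$-primary part is identified with $\ker(1-g)$ on $A[\ell^\infty]=V_\ell/T_\ell$, namely $\pi^{-1}T_\ell/T_\ell\cong (R/\pi)^s$. This group has order $\ell^s$ and is killed by $\pi$, hence by $\ell$ (since $\pi\mid \ell$), so it lies in $A[\ell]$.

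The main obstacle is the prime-to-$\ell$ part, and showing it is trivial establishes (1). I would use that $\deg(1-g)$ equals the norm of $1-\zeta$ raised to a power. More cleanly, I would evaluate $\Phi_{\ell^r}(g)=0$ at $x=1$: $\Phi_{\ell^r}(1)=\ell$, and $\Phi_{\ell^r}(x)-\Phi_{\ell^r}(1)$ is divisible by $(x-1)$. This gives $\ell=(1-g)h(g)$ in $\End(A)$ for some polynomial $h$. Two points need care here. First, $\Phi_{\ell^r}(g)=0$ holds in $\End(A)$ because $\End(A)$ acts faithfully on $T_\ell$ (or on $T_{\ell'}$ for $\ell'\ne p$). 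Second, the identity $\ell=(1-g)h(g)$, with the factors commuting, shows that $\ker(1-g)\subseteq \ker(\ell)=A[\ell]$.

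This gives (1) for any point fixed by an element of order $\ell^r$. For (2), with $G=C_{\ell^r}=\langle g\rangle$, the fixed points of $G$ are exactly the fixed points of $g$. These form a subgroup of $A[\ell]$, and the module computation above shows it has order $\ell^m$. Note that when $\ell=p$ the fixed scheme might be non-reduced; the statement concerns $\bar k$-points, so we count $\bar k$-points via the étale part, which is what $T_p$ sees.
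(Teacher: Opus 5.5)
Your proof is correct and follows the same route as the paper. You treat $T_\ell(A)$ as a free module over $\Z_\ell[\zeta_{\ell^r}]$, derive $\ell=(1-g)h(g)$ from $\Phi_{\ell^r}(1)=\ell$ for (1) (this is what the paper's ``norm of $\zeta_{\ell^r}-1$'' step amounts to), and count $(R/\pi)^m$ for (2), computing $\ker(1-g)$ on $V_\ell/T_\ell$ instead of the paper's equivalent $\coker(1-g)$ on $T_\ell$. One small point: when $\ell=p$, $\End(A)$ need not act faithfully on $T_p(A)$, and the propositions you cite only control eigenvalues on $V_p$, so justify $\Phi_{p^r}(g)=0$ in $\End(A)$ directly from $0=\Phi_{p^r}(g)\,(g^{p^{r-1}}-1)$, where $g^{p^{r-1}}-1$ is an isogeny by rigidity, as the paper does for $r=1$ in Proposition~\ref{fixorig_prop}.
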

\begin{proof}
\begin{enumerate}
\item 
As $g \in G$ has order $\ell^r$, $T_\ell(A)$ is a free module over $\Z_\ell[\zeta_{\ell^r}]$ and if $P$ is a fixed point of $g$, then $(g-\mathrm{id})(P)=O$. Therefore, the norm $\mathrm{Norm}_{\Q(\zeta_{\ell^r})/\Q}(\zeta_{\ell^r}-1)=\ell$ also annihilates $P$, so $P\in A[\ell](\bar{k})$.\\

\item Let $m$ be the rank of $T_\ell(A)$ as a free module over $\Z_\ell[\zeta_{\ell^r}]$. We know that, as $\Z_\ell$-module, $\rank_{\Z_\ell}(T_\ell(A))=\log_\ell(\#A[\ell])$, which is $4$ if $l\neq p$ and $f(A)$ if $\ell=p$. Furthermore,
\begin{align*}
\Z_\ell[\zeta_{\ell^r}]=\Z_\ell[x]/(\Phi_{\ell^r}(x)\Z_\ell[x])
\end{align*}
where $\Phi_{\ell^r}(x)$ is the $\ell^r$-th cyclotomic polynomial. As the degree of $\Phi_{\ell^r}(x)$ is $\varphi(\ell^r)=(\ell-1)\ell^{r-1}$, we deduce that
\begin{align*}
m=\frac{\log_\ell(\#A[\ell])}{(\ell-1)\ell^{r-1}}.\\[-5pt]
\end{align*}

Now, the set of fixed points of $G$ is isomorphic to
\begin{align*}
T_\ell(A)/((g-\mathrm{id})T_\ell(A))\cong (\Z_\ell[\zeta_{\ell^r}]/((\zeta_{\ell^r}-1)\Z_\ell[\zeta_{\ell^r}]))^{m}.\\[-5pt]
\end{align*}
We know that $\Z_\ell[\zeta_{\ell^r}]$ is a completely ramified extension of $\Z_\ell$ and one can check that $(\zeta_{\ell^r}-1)$ is a uniformiser of $\Z_\ell[\zeta_{\ell^r}]$, so
\begin{align*}
\Z_\ell[\zeta_{\ell^r}]/((\zeta_{\ell^r}-1)\Z_\ell[\zeta_{\ell^r}])\cong\mathbb{F}_\ell
\end{align*}
and, therefore, the number of fixed points is $\ell^{m}$. \qedhere
\end{enumerate}
\end{proof}

One thing that is hinted in this proposition is that for an element $g$ of order $\ell^r$ to act rigidly on $A$, $(\ell-1)\ell^{r-1}$ must divide $\log_\ell(\#A[\ell])$. This is true because if $G=\langle g\rangle$ acts rigidly, the action of $G$ on $V_\ell(A)$ has to be semisimple with eigenvalues given by primitive $\ell^r$-th roots of unity.\\

As a consequence, the representation of $G$ on $V_\ell(A)$ factors through a direct sum of characters that are $\ell^r$-th roots of unity. Not all of these characters are defined over $\mathbb{Q}_\ell$, but they become defined over $\mathbb{Q}_\ell(\zeta_{\ell^r})$, where $\zeta_{\ell^r}$ is a primitive $\ell^r$-th root of unity. The Galois group \( \mathrm{Gal}(\mathbb{Q}_\ell(\zeta_{\ell^r})/\mathbb{Q}_\ell) \) acts on these characters, and each primitive character of order \( \ell^r \) has a full Galois orbit of size $[\mathbb{Q}_\ell(\zeta_{\ell^r}) : \mathbb{Q}_\ell] = \varphi(\ell^r)$.
Each such orbit spans an irreducible representation of \( G \) over \( \mathbb{Q}_\ell \) of dimension \( \varphi(\ell^r) \), corresponding to the sum of all Galois conjugates of a given primitive character. Therefore, to write down a rigid action of \( G \) on \( V_\ell(A) \), we must be able to fit copies of these irreducible representations of dimension \( \varphi(\ell^r) \) into \( V_\ell(A) \), which has dimension $\log_\ell(\#A[\ell])$. Hence, we obtain the divisibility condition:
\begin{align*}
(\ell - 1)\ell^{r - 1} \mid \log_\ell(\#A[\ell]).\\
\end{align*}

For abelian surfaces, when $\ell\neq p$, $\log_\ell(\#A[\ell])=4$ and we therefore deduce that the possibilities are $(\ell,r)\in\{(2,1),(2,2),(2,3),(3,1),(5,1)\}$.\\

If $\ell= p$, $\log_\ell(\#A[\ell])=f(A)$, so if $f(A)=2$ we deduce that the only possibilities that can happen are $(\ell,r)\in\{(2,1),(2,2),(3,1)\}$, and if $f(A)=1$ the only possibility is $(\ell,r)=(2,1)$.\\

If $G$ is not cyclic, we can use the information about the Sylow $\ell$-subgroups of $G$ to deduce information about the fixed points of $G$:

\begin{proposition}[{\cite[Lemma 5.2]{Rybakov2024GeneralizedFields}}] \label{comp_sing_prop}
Under the same assumptions as in Proposition \ref{prime_sing_prop}, we also have the following: 
\begin{enumerate}
\item If $H$ is a subgroup of $G$ such that $\ell$ does not divide the order of $H$, then the action of $H$ on $A[\ell](\bar{k})$ only fixes $O$.
\item Let $G^{(\ell)}$ be a fixed Sylow $\ell$-subgroup of $G$, and let $n_\ell$ be the number of points in $A$ whose stabiliser is exactly $G^{(\ell)}$. Let $s_\ell$ be the number of conjugacy classes of Sylow $\ell$-subgroups of $G$. Then the total number of points that are fixed by any group Sylow $\ell$-subgroup is $s_\ell n_\ell$.
\end{enumerate}
\end{proposition}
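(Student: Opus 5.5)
Part (1) is proved by the same Tate module argument as Proposition \ref{prime_sing_prop}. Let $P\in A[\ell](\bar k)$ be fixed by $H$, where $\ell\nmid|H|$. Suppose $P\neq O$. Then some nontrivial $h\in H$ fixes $P$; replacing $h$ by a suitable power, we may take $h$ of prime order $q\neq\ell$. Now regard $A[\ell]$ as $T_\ell(A)/\ell T_\ell(A)$. Since $\ell\nmid |H|$, the $\mathbb{F}_\ell[H]$-module $A[\ell]$ is semisimple, and it is the reduction of the $\Z_\ell[H]$-lattice $T_\ell(A)$ (when $\ell=p$ this lattice has rank $f(A)>0$). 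By the argument preceding Proposition \ref{prime_sing_prop}, $T_\ell(A)$ is a free module over $\Z_\ell[\zeta_q]$, with $\zeta_q$ acting as $h$. Reducing modulo $\ell$, $A[\ell]$ becomes a free module over $\FF_\ell[x]/(\Phi_q(x))$. Because $q\neq\ell$, the polynomial $\Phi_q$ is separable modulo $\ell$ and $\Phi_q(1)=q\not\equiv0 \pmod \ell$, so $1$ is not an eigenvalue of $h$ on $A[\ell]$. Hence $h-\mathrm{id}$ is injective on $A[\ell]$, so $h$ fixes no nonzero $\ell$-torsion point, a contradiction. Equivalently, one can note directly that $\mathrm{Norm}(\zeta_q-1)=q$ is a unit in $\Z_\ell$, so $h-\mathrm{id}$ is an automorphism of $T_\ell(A)$ and therefore of $A[\ell]$.

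For part (2), we count the points $P\in A$ whose stabiliser is a Sylow $\ell$-subgroup. Since $\Stab_G(gP)=g\,\Stab_G(P)\,g^{-1}$, $G$ permutes these points, and by Sylow's theorem all Sylow $\ell$-subgroups are conjugate. So for each Sylow $\ell$-subgroup $S$, the number of points with stabiliser exactly $S$ equals $n_\ell$, the number for the fixed choice $G^{(\ell)}$. A point has exactly one stabiliser, so the sets for distinct Sylow subgroups are disjoint, and the total is $s_\ell n_\ell$, where $s_\ell$ is read as the number of Sylow $\ell$-subgroups (the size of their single conjugacy class). To connect this with ``points fixed by a Sylow $\ell$-subgroup'', we use part (1) and Proposition \ref{prime_sing_prop}(1): a point fixed by $S$ lies in $A[\ell]$. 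If its stabiliser is strictly larger than $S$, it contains an element of order prime to $\ell$, which part (1) rules out unless the point is $O$. So, apart from $O$, whose stabiliser is all of $G$, the stabiliser of such a point is exactly $S$.

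The main obstacle is this last point. We must make sure that the stabiliser of a point fixed by $S$ is not a larger $\ell$-group or a mixed subgroup, and we must handle $O$ separately (it is fixed by everything, so it is counted with stabiliser $G$ rather than $G^{(\ell)}$). The first step I would carry out is to check that any subgroup strictly containing a Sylow $\ell$-subgroup has order divisible by a prime other than $\ell$, and hence contains a subgroup $H$ with $\ell\nmid|H|$, to which part (1) applies.
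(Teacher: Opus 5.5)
Your proof is correct. The paper gives no argument of its own for this statement; it is quoted from \cite[Lemma 5.2]{Rybakov2024GeneralizedFields}. So there is no internal route to compare against. What you wrote is the natural proof, and it parallels the paper's own arguments in Proposition \ref{prime_sing_prop}(1) and Proposition \ref{fixorig_prop}.

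Some tidying is worth doing.

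In (1), drop the appeal to freeness of $T_\ell(A)$ over $\Z_\ell[\zeta_q]$. It is not needed, and it is not automatic. For $q\neq\ell$ the ring $\Z_\ell[x]/(\Phi_q)$ need not even be a domain, e.g.\ $\Z_7[x]/(\Phi_3)\cong\Z_7\times\Z_7$. For $\ell=p$, the Galois-conjugate eigenvalues of $h$ on $V_p(A)$ need not occur with equal multiplicity. Your ``equivalently'' argument is the right one, and it can be done in $\End(A)$ without Tate modules:
\begin{itemize}
\item rigidity makes $h-\mathrm{id}$ an isogeny, so $(h-\mathrm{id})\Phi_q(h)=h^q-\mathrm{id}=0$ forces $\Phi_q(h)=0$;
\item writing $\Phi_q(x)=(x-1)v(x)+q$ gives $v(h)(h-\mathrm{id})=-q$;
\item so a point fixed by $h$ is killed by $q$, and if it also lies in $A[\ell]$ it is killed by $\gcd(q,\ell)=1$.
\end{itemize}
This is exactly the mechanism of Proposition \ref{fixorig_prop}. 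The statement tacitly needs $H\neq 1$, as you assume.

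In (2), the ``main obstacle'' is already handled by your own argument. No $\ell$-subgroup strictly contains a Sylow subgroup $S$. If $S\subsetneq T$, then $[T:S]>1$ is prime to $\ell$, so Cauchy gives an element of prime order $q\neq\ell$ in $T$, and (1) applies to it.

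Your readings of the statement are the correct ones:
\begin{itemize}
\item $s_\ell$ has to be the number of Sylow $\ell$-subgroups, since there is only one conjugacy class;
\item one needs $\ell\mid|G|$;
\item $s_\ell n_\ell$ counts the points other than $O$, unless $G$ is itself an $\ell$-group, in which case $O$ is included in $n_\ell$.
\end{itemize}
This matches how the paper uses the count, e.g.\ the $32$ non-trivial $3$-torsion points for $\SL_2(\FF_3)$.
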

Combining the results of the Propositions \ref{prime_sing_prop} and \ref{comp_sing_prop}, we deduce that if $p$ divides the order of $G$ and $f(A) > 0$, the only possible groups $G$ acting rigidly and symplectically on an abelian surface $A$ and values of $f(A)$ such that $A/G$ has only rational double points are the following:

\begin{adjustbox}{width=\textwidth}
\begin{minipage}{\textwidth}
\begin{table}[H]
\centering
\begin{tabular}{|c|cc|c|}
\hline
\rowcolor[HTML]{FFFFFF}
$G$                    & \multicolumn{2}{c|}{Conditions}                                              & Singularities of $A/G$         \\ \hline
\multirow{2}{*}{$C_2$}
                       & \multicolumn{1}{c|}{\multirow{2}{*}{$p=2$}}    & $f(A)=2$         & $4 D_4^1$              \\ \cline{3-4} 
                       & \multicolumn{1}{c|}{}                          & $f(A)=1$         & $2 D_8^2$              \\ \hline
$C_3$                       & \multicolumn{1}{c|}{$p=3$}                     & $f(A)=2$     & $3E_6^1$               \\ \hline
$C_4$                        & \multicolumn{1}{c|}{$p=2$}                     & $f(A)=2$       & $2E_7^3+D_4^1$          \\ \hline
\multirow{2}{*}{$C_6$}                        & \multicolumn{1}{c|}{$p=2$}                     & $f(A)=2$       & $E_6^1+D_4^1+4 A_2$    \\ \cline{2-4} 
                       & \multicolumn{1}{c|}{$p=3$}                     & $f(A)=2$       & $E_7^1+E_6^1+5A_1$     \\ \hline
$Q_{12}$              & \multicolumn{1}{c|}{$p=2$}                     & $f(A)=2$       & $E_8^4+E_7^3+2A_2$          \\ \hline
$\SL_2(\FF_3)$                        & \multicolumn{1}{c|}{$p=3$}                     & $f(A)=2$       & $E_8^2+E_6^1+D_4+A_1$               \\ \hline
\end{tabular}
\caption{A classification of generalised Kummer surfaces when $p\mid |G|$ and $f(A)>0$.}
\label{classp_table}
\end{table}
\end{minipage}
\end{adjustbox}\vspace{20pt}
In this table, the number and ADE types of the singularities have been computed using Proposition \ref{comp_sing_prop}, following the same method as in the proof of Proposition 5.1 in Rybakov’s paper. In cases where several singularities share the same ADE type, we determined their local isomorphism classes using the information on local fundamental groups contained in the tables of Section \ref{ss_rdp}.\\

In Section \ref{sec_examples}, we will see that all of the possibilities in this table can indeed occur even when we restrict to the case in which $A$ is a product of two elliptic curves. Before turning to those examples, however, let us first deal with the supersingular case.\\

\section{About the supersingular case} \label{ss_section}
The goal of this section is to show that if $A$ is a supersingular abelian surface and  $p \mid |G|$, then, the desingularisation of $A/G$ cannot possibly be a K3 surface. 

\subsection{Reducing to the case where \texorpdfstring{$\boldsymbol{G=C_p}$}{G=Cp}}
The following result shows that, it is enough to prove the statement for the case where $G=C_p$.

\begin{proposition} \label{quot_prop}
Let $A$ be a supersingular abelian surface defined over a field of characteristic $p$. Assume that whenever $p\in\{2,3,5\}$,  $A/C_p$ is never a generalised Kummer surface. Then, for any group $G$ such that $p\mid |G|$, $A/G$ is never a generalised Kummer surface.
\end{proposition}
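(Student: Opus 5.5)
Argue by contradiction: suppose $A$ is supersingular, $p\mid |G|$, and $A/G$ is a generalised Kummer surface. By Theorem \ref{Katsura_theorem} (whose necessity holds in every characteristic), the action of $G$ is rigid and symplectic and every singularity of $A/G$ is a rational double point. Proposition \ref{primegroups_propo} then restricts $G$. For $p=2$ it is one of $C_2,C_4,C_6,Q_{12}$. For $p=3$ it is one of $C_3,C_6,\SL_2(\FF_3)$. For $p=5$ it is $C_5$. In every one of these groups there is a normal subgroup $H$ isomorphic to $C_p$. For $C_{2n}$ and $C_3$, $C_5$ this is clear. For $Q_{12}$ the unique element of order $2$ is central. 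For $\SL_2(\FF_3)$ the subgroup $C_3$ is not normal, but this case is handled by the passage to $C_6\leq \SL_2(\FF_3)$ described below. The strategy is to deduce that $A/H$ would itself be a generalised Kummer surface, contradicting the hypothesis.

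The key step is to show that if $A/G$ is a generalised Kummer surface, then so is $A/H$ for every subgroup $H\leq G$ (in particular for $H\cong C_p$). Rigidity and symplecticity pass to subgroups immediately from the definitions. It remains to check that $A/H$ has only rational double points, and that $A/H$ is singular, which holds because $O$ is an isolated fixed point. Take $Q=\phi(P)\in A/H$. The map $A/H\to A/G$ is a finite quotient by $G$-translates, so it is not a group quotient unless $H$ is normal. To make the local argument clean, first pass to a normal subgroup, then argue locally. Locally at $\phi(P)$, $A/\Stab_H(P)$ is a cover of $A/\Stab_G(P)$ that is étale in codimension one, because the action is rigid so there are no fixed curves. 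A singularity whose normal-surface cover, étale away from a point, is a rational double point is itself a rational singularity. This is standard: quasi-étale finite covers of rational singularities of degree prime to $p$ are rational, and for degree divisible by $p$ one instead uses that the canonical divisor is trivial and $\chi$ compatibility. This local statement is the step I expect to be the main obstacle, since in characteristic $p$ with $p$ dividing the degree, rationality need not descend or ascend in general.

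To avoid that difficulty, I would first try a global argument instead. Let $S_G$ and $S_H$ be the minimal resolutions. Because the action is symplectic, $K_{A/H}$ is trivial as a Weil divisor, and the singularities of $A/H$ are Gorenstein quotient singularities. Proposition \ref{ellsing_prop} applies: if $A/H$ had a non-RDP singular point, then $A/H$ would be rational. But $A/H$ dominates $A/G$, and conversely $A/G$ is dominated by $A/H$. If $A/H$ were rational then $A/G$ would be unirational, which is consistent with $A/G$ being a supersingular K3 surface, so this alone gives no contradiction. Hence the local rationality argument seems necessary after all. I would carry it out by computing the geometric genus via $H$-invariants: $p_g$ of a singularity of $A/H$ is bounded by $p_g$ of the corresponding singularity of $A/G$, through the trace map on the $G/H$-cover when $p\nmid [G:H]$. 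In our list $[G:H]$ is always prime to $p$, with one exception: for $\SL_2(\FF_3)$ I take $H=C_3\leq C_6$, whose index in $\SL_2(\FF_3)$ is $8$, prime to $3$. With the index prime to $p$ the trace argument works, and it gives that $A/H$ has only rational double points. Therefore $A/H\cong A/C_p$ is a generalised Kummer surface, which contradicts the assumption.
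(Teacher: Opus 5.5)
Your proposal has a genuine gap, and it sits where you abandoned the global argument. That argument is the paper's proof, and your objection to it is mistaken. The map $A/H\to A/G$ is separable: $k(A)^G\subseteq k(A)^H\subseteq k(A)$ and $k(A)/k(A)^G$ is Galois. So the induced dominant rational map from the resolution $\tilde S_H$ of $A/H$ to the K3 surface $S_G$ is generically \'etale. It therefore pulls back a nonzero $2$-form on $S_G$ to a nonzero regular $2$-form on $\tilde S_H$, which is impossible if $\tilde S_H$ is rational, because then $p_g(\tilde S_H)=0$. Unirational supersingular K3 surfaces do exist, but their parametrisations are necessarily inseparable, so they cause no conflict.

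The argument then runs as follows. $H\cong C_p$ inherits rigidity and symplecticity, and $A/H$ is not a generalised Kummer surface by hypothesis. Hence, as in the paper, it has a singular point that is not a rational double point, and Proposition \ref{ellsing_prop} makes its resolution rational. Separability then gives the contradiction. The paper phrases this step as the impossibility of a dominant map from a rational surface onto a K3 surface, leaving separability implicit, but that is exactly what answers your objection. You are right that $C_3$ is not normal in $\SL_2(\FF_3)$, contrary to what the paper's proof asserts, but normality is never needed: $A\to A/G$ factors through $A/H$ for every $H\le G$.

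The local substitute you propose does not work as stated. For $f\colon Y=A/H\to X=A/G$, the normalised trace $\frac{1}{d}\mathrm{Tr}$ splits $\mathcal{O}_X\to f_*\mathcal{O}_Y$. Such a splitting makes rationality \emph{descend} from the cover to the base, bounding $p_g(x)$ by the genera of the points above $x$; it does not let rationality ascend. Ascent in fact fails for general rational singularities even in characteristic $0$: a simple elliptic singularity is a quasi-\'etale double cover of a rational log canonical singularity. So your claim about prime-to-$p$ quasi-\'etale covers of rational singularities is false, and the rational double point hypothesis on $X$ has to enter essentially. Your assertion that $p\nmid[G:H]$ throughout the list is also wrong. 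In characteristic $2$, $[C_4:C_2]=2$ and $[Q_{12}:C_2]=6$, so exactly these cases are wild and no trace argument reaches them.

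A correct local route does exist, and it would be a genuinely different proof. Since $f$ is separable and \'etale in codimension one, $K_Y=f^*K_X$. Hurwitz's formula on higher models applies because the different along a divisor with ramification index $r$ has exponent at least $r-1$, even for wild ramification. It gives discrepancies $a(E',Y)\ge r\,a(E,X)+r-1\ge 0$. So $Y$ has canonical Gorenstein singularities, that is, rational double points. That argument goes through discrepancies, not through a trace bound on $p_g$.
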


\begin{proof}
We have seen that if $p \mid |G|$, $A/G$ can only be a generalised Kummer surface if $p$ is $2,3$ or $5$ and $G$ is one of the groups described in Proposition \ref{primegroups_propo}.\\

Since $C_2$ is a normal subgroup of $C_4$, $C_6$ and $Q_{12}$, and $C_3$ is a normal subgroup of $C_6$ and $\SL_2(\mathbb{F}_3)$, in each of these cases the quotient map $A\rightarrow A/G$ factors through the quotient $A\rightarrow A/C_p$. Suppose that $A/G$ were a generalised Kummer surface. Then, from Theorem \ref{Katsura_theorem}, we would deduce that the singularities of $A/G$ would all be rational double points, and from the fact that the quotient maps are finite, we deduce that $A/C_p$ would have isolated singularities.\\

On the other hand, since $A/C_p$ is not a generalised Kummer surface, we would deduce that $A/C_p$ should have at least one singular point that is not a rational double point. By Proposition \ref{ellsing_prop}, this would imply that that the minimal desingularisation of $A/C_p$ is a rational surface. This is impossible, however, as it would imply that there is a finite dominant morphism from a rational surface into the minimal desingularisation of $A/G$, which is a K3 surface.
\end{proof}

\subsection{Characteristic two}

The main hypothesis of Proposition \ref{quot_prop} is satisfied in characteristic two because of a fact that holds in all characteristics: whenever $A/C_2$ is a generalised Kummer surface, it is necessarily a Kummer surface. 
\begin{proposition} \label{prop_order2}
Let $G=C_2$ act rigidly on an abelian variety $A$. Then, $C_2=\langle\iota\rangle$ where $\iota:A\rightarrow A$ is the map that sends any point to its inverse.
\end{proposition}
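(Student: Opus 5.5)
Let $g$ denote the generator of $C_2$. Since $G$ preserves the group law, $g$ is an automorphism of $A$ as an abelian variety with $g^2=\mathrm{id}$. The plan is to show $g+\mathrm{id}=0$ in $\End(A)$, which gives $g=-\mathrm{id}=\iota$.

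The key object is the endomorphism $h = \mathrm{id}+g$. From $g^2=\mathrm{id}$ we get $g\circ h = g+\mathrm{id}=h$. Hence every point of the image $B=h(A)$ is fixed by $g$. The image $B$ is an abelian subvariety of $A$, since it is the image of a homomorphism of abelian varieties. Rigidity says $g$ fixes only finitely many points, so the connected group $B$ is finite, hence $B=0$. Therefore $h=0$, that is, $g=-\mathrm{id}$.

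This argument is characteristic-free: it does not use the Tate module or any assumption on $p$ or on $f(A)$. That is exactly what the preceding remark requires, namely that the claim hold in all characteristics, including the supersingular case in characteristic two.

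The only point needing care is that the image of $h$ really is an abelian subvariety. It is a closed connected subgroup scheme, and taking the reduced image suffices. Its dimension is zero because it is set-theoretically contained in the fixed locus of $g$, which is finite by rigidity. A positive-dimensional connected group has infinitely many $\bar k$-points, so the image is a point, and $h$ is the zero map on $\bar k$-points, hence zero as a morphism of reduced varieties.

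I expect no real obstacle beyond this bookkeeping. For completeness I would also note the converse: $\iota$ does act rigidly, since its fixed points are $A[2](\bar k)$, which is finite and nonempty as it contains $O$. This shows the statement is consistent.
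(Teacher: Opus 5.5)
Your proof is correct and is essentially the paper's argument: both rest on the identity $(g-\mathrm{id})(g+\mathrm{id})=0$ together with rigidity, which makes $\ker(g-\mathrm{id})$ (the fixed locus of $g$) finite. The only difference is the last step. The paper inverts the isogeny $g-\mathrm{id}$ in $\End^0(A)$, while you note that the connected image of $g+\mathrm{id}$ lies in that finite kernel, which avoids passing to $\End^0(A)$ and the implicit use of torsion-freeness of $\End(A)$.
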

\begin{proof}
Let $g\in G$ be element of $G$ of order two and let $\psi \in \End(A)$ be $\psi = g - \mathrm{id}$. A point $P \in A$ is fixed by $g$ if and only if $g(P) = P$, which is equivalent to $\psi(P) = O$. Therefore, the fixed locus of $g$ is exactly $\ker(\psi)$. As $G$ acts rigidly on $A$, we deduce that $\psi$ is an isogeny and therefore, $\psi\in \End^0(A) = \End(A) \otimes_{\mathbb{Z}} \mathbb{Q}$.\\

As $g^2=\mathrm{id}$, we deduce that
\begin{align*}
0 = g^2 - \mathrm{id}=(g-\mathrm{id})(g+\mathrm{id})=\psi(g+\mathrm{id})
\end{align*}
and, therefore, as $\psi\in \End^0(A)$, composing with $\psi^{-1}$ we deduce that $g+\mathrm{id}=0$, and therefore, $g=\iota$.
\end{proof}

\begin{corollary} \label{two_cor}
Let $A$ be a supersingular abelian surface in characteristic two and let $C_2$ act on $A$. Then, $A/C_2$ is not a generalised Kummer surface.
\end{corollary}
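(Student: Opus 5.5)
By Proposition \ref{prop_order2}, if $C_2$ acts rigidly on $A$ then the generator is the inversion $\iota$. If the action is not rigid, Theorem \ref{Katsura_theorem} (whose necessity direction holds in every characteristic) already rules out $A/C_2$ being a generalised Kummer surface. So it suffices to show that the classical Kummer quotient $A/\langle\iota\rangle$ of a supersingular abelian surface in characteristic two does not have a K3 surface as minimal resolution. The plan is to show that its singularities are not all rational double points. Then Proposition \ref{ellsing_prop} applies, since $\iota$ has only isolated fixed points and hence no fixed curves, and it shows that $A/C_2$ is rational.

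First, locate the fixed points. They are the points of $A[2](\bar k)$, and since $f(A)=0$ this is just $\{O\}$. So $A/\langle\iota\rangle$ has a single singular point, the image of $O$. Next I would analyse the local ring there. Choose formal coordinates $x,y$ at $O$ coming from the formal group of $A$. Then $\iota^*x=[-1](x)$, and similarly for $y$. Because $A$ is supersingular, the formal group has height $4$, so $[2]$ has no linear term and $[-1]$ is congruent to the identity modulo quadratic terms. The invariant ring $k[[x,y]]^{\iota}$ can then be described through the invariants $x\,\iota^*x$, $y\,\iota^*y$, $x\,\iota^*y+y\,\iota^*x$ and the traces $x+\iota^*x$, $y+\iota^*y$.

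The argument I would aim for is numerical rather than based on explicit normal forms. Suppose the singularity were a rational double point. Then the resolution $S$ of $A/C_2$ would be a K3 surface containing a single configuration of $(-2)$-curves of rank at most $8$, because in characteristic two the $E_8$ type is the largest admissible and only one singular point is available. Compare this with the covering $A\to A/C_2$. The supersingular K3 surfaces that arise in this way have Picard number $22$ and Artin invariant constraints. An alternative check uses Euler characteristics. We have $c_2(A)=0$ and $c_2(S)=24$, and the quotient is by an inseparable-type wild action at one point. The local contribution of the single singular point, computed from the exceptional locus as $1+\#\text{components}$, is at most $9$. For the resolution to be K3 it would instead need to be $24$ minus the contribution of the free part, which is $0$. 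That forces $24=1+r$ with $r\le 8$, a contradiction. This mirrors the ordinary case: for $f(A)=2$ we have four $D_4$ points, contributing $4\cdot5=20$, plus a correction of $4$ from wild ramification.

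The main obstacle is making the Euler-characteristic bookkeeping rigorous for a wildly ramified quotient. The naive formula $e(S)=e(A\setminus O)/2+e(\text{exceptional})$ uses étale cohomology with $\ell\neq2$, and it holds because $A\setminus O\to (A/C_2)\setminus\{pt\}$ is finite étale of degree $2$. It gives $e(S)=0/2-1+1+(1+r)$, so $e(S)=1+r\le 9<24$. I would verify carefully that $e(A\setminus O)=-1$, so that the quotient minus the point has Euler characteristic $-1/2$. That value is not an integer, which is itself a contradiction unless the computation is done correctly. The delicate point is therefore exactly this count, and I would try first to redo it with compactly supported $\ell$-adic Euler characteristics. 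If it does not close cleanly, the fallback is a direct computation of the invariant ring, showing that the singular point is an elliptic singularity, as in known work on Kummer surfaces in characteristic two.
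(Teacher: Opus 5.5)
Your reduction to $\iota$ and the observation that $O$ is its only fixed point agree with the paper. The numerical argument you use to finish, however, fails in two places.

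\textbf{The rank bound is false.} The claim that a single rational double point has rank at most $8$ does not hold in characteristic two. There are rational double points of type $A_n$ and $D_n$ for every $n$. Even after imposing $C_2\le\pi_1$ via Lemma \ref{prop_fund}, the types $D_{2n}^r$ with $r\geq n/2$ survive for every $n$; for instance $D_{16}^4$ has local fundamental group $C_2$. This is why the paper's argument for $p=3,5$ in Corollary \ref{three-five_cor}, where the fundamental group forces an $E$-type, does not carry over to $p=2$.

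\textbf{The Euler characteristic count is wrong.} The map $A\setminus O\to (A/C_2)\setminus\{\mathrm{pt}\}$ is indeed finite étale of degree $2$. But it is wildly ramified at the puncture, and $\chi_c$ is not multiplicative for such covers; compare the finite étale double cover $t\mapsto t^2+t$ of $\mathbb{A}^1$, which has $\chi_c=1$ on both sides. The non-integer $-1/2$ you obtain is a symptom of this. Your formula $e(S)=1+r$ also silently drops the contribution of the open part altogether.

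The correct computation uses $H^i_c(U/\iota,\mathbb{Q}_\ell)=H^i_c(U,\mathbb{Q}_\ell)^{\iota}$ for $U=A\setminus O$. Here $H^i_c(U)\cong H^i(A)$ for $i\geq 1$ and $\iota$ acts on it by $(-1)^i$, so $\chi_c(U/\iota)=\tfrac12(-1+15)=7$. This $7$ is the analogue of your ``correction of $4$'' in the ordinary case. Hence $e(S)=8+r$, and $S$ being K3 forces $r=16$, in agreement with Proposition \ref{module_prop} ($22=6+r$). Because of the first point, this is not a contradiction.

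\textbf{What is missing.} You need local information at $O$ that excludes a single $16$-component rational double point such as $D_{16}^4$. The paper obtains this by citing Katsura's work on Kummer surfaces in characteristic two: the singular point of $A/\langle\iota\rangle$ is elliptic, and Proposition \ref{ellsing_prop} then shows the quotient is rational. Your fallback points in this direction but is not carried out. Moreover, the facts you attribute to supersingularity (that $[2]$ has no linear term and that $[-1]\equiv\mathrm{id}$ modulo quadratic terms) hold for every abelian surface in characteristic two, since $d[2]=0$ and $-1=1$. So they cannot distinguish the supersingular case from the ordinary one, where the quotient does have rational double points ($4D_4^1$ or $2D_8^2$).

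The input that actually matters is the shape of the ideal $(u+\iota^*u,\,v+\iota^*v)$. This is the ideal of the length-$16$ local scheme $A[2]$ at $O$, and its shape is governed by the nilpotent Verschiebung of $A$; for $A=E\times E$ in product coordinates it is $(u^4,v^4)$. Feeding this into an explicit description of the invariant ring is what shows the singularity is not a rational double point.
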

\begin{proof}
From Proposition \ref{prop_order2}, we deduce that the only possible quotient of a supersingular abelian surface that acts rigidly on an abelian variety in characteristic two by an action of order two is the Kummer surface. 
From the work of Katsura on Kummer surfaces in characteristic two \cite{Katsura1978On2}, the quotient of a supersingular abelian surface by the action of the sign involution has elliptic singularities, and therefore, from Proposition \ref{ellsing_prop}, we deduce that the desingularisation is always a rational surface.
\end{proof}
The equations for the model of the quotient surface $A/\iota$ in the case where $A$ is the product of two elliptic curves can be found in Subsection \ref{order2_subsection}, whereas the case where $A$ is the Jacobian of a genus two curve can be found in an earlier paper by the author \cite{Gonzalez-Hernandez2026ExplicitSpecialisation}.
\begin{remark}
In light of Corollary \ref{two_cor}, it is worth mentioning Matsumoto’s work on \textbf{inseparable Kummer surfaces} \cite{Matsumoto2024InseparableSurfaces}. He showed that, although the quotient of a supersingular abelian surface by $\langle\iota\rangle$ is not a K3 surface, one can construct inseparable analogues that share many key properties with classical Kummer surfaces. For example, these inseparable Kummer surfaces admit coverings $Y \rightarrow X$ such that the smooth locus of $Y$ is a group variety.
\end{remark}

\subsection{Characteristics three and five}
\begin{proposition} \label{fixorig_prop}
Let $G=C_p$ act rigidly on a supersingular abelian variety in characteristic $p$. Then, $G$ only fixes the origin.
\end{proposition}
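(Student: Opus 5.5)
Let $g$ be a generator of $G=C_p$. The fixed locus of $g$ is the reduced subscheme underlying $\ker(\psi)$, where $\psi=g-\mathrm{id}$. Rigidity says this locus is finite, so $\psi$ is an isogeny. I will show that $\ker\psi$ is a connected (infinitesimal) group scheme, which forces its only $\bar{k}$-point to be $O$.

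The first step is to show that $\ker\psi$ is killed by a power of $p$. Since $g^p=\mathrm{id}$ and $g\neq \mathrm{id}$, the minimal polynomial of $g$ in $\End^0(A)$ divides $x^p-1=(x-1)\Phi_p(x)$. The factor $x-1$ contributes nothing, because $\psi$ is invertible in $\End^0(A)$. Hence $\Phi_p(g)=0$, exactly as in the argument of Proposition~\ref{prop_order2}, so $\Z[g]\cong\Z[\zeta_p]$ acts on $A$ with $g\mapsto\zeta_p$. In $\Z[\zeta_p]$ we have the identity
\[
\prod_{i=1}^{p-1}(\zeta_p^i-1)=\Phi_p(1)=p .
\]
Each factor $\zeta_p^i-1$ is $\psi$ times a polynomial in $g$, and all these commute. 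Therefore $p=\psi\circ\chi$ for some $\chi\in\End(A)$, so $\ker\psi\subseteq A[p]$. In particular $\ker\psi$ has order a power of $p$ and is a finite subgroup scheme of $A[p]$.

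The second step uses supersingularity. A supersingular abelian variety has $p$-rank $0$, so $A[p](\bar{k})=\{O\}$ and $A[p]$ is a connected (local–local) group scheme. Any closed subgroup scheme of a connected finite group scheme is connected, so $\ker\psi$ is connected. Its set of $\bar{k}$-points is therefore just $\{O\}$. Since a fixed point of $g$ is precisely a $\bar{k}$-point $P$ with $\psi(P)=O$, the only fixed point of $G$ is the origin.

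The only step requiring care is the factorisation of $p$ through $\psi$. This is where I use rigidity to pass from $g^p=\mathrm{id}$ to the relation $\Phi_p(g)=0$ in $\End(A)$: the relation first holds in $\End^0(A)$, and it holds in $\End(A)$ because $\End(A)$ is torsion-free. Once that relation is in hand, the argument is formal.
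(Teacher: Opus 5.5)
Your proof is correct and is essentially the paper's argument. Rigidity makes $\psi=g-\mathrm{id}$ invertible in $\End^0(A)$, which forces $\Phi_p(g)=0$; hence every fixed point lies in $A[p]$, and $p$-rank zero leaves only $O$. The paper gets $[p]P=O$ more directly, by evaluating $\Phi_p(g)$ at a fixed point $P$. Your factorisation $p=\pm\chi\psi$ is the norm argument of Proposition~\ref{prime_sing_prop}(1), with a harmless sign slip when $p=2$. The connected-group-scheme language is also unnecessary, since $(\ker\psi)(\bar{k})\subseteq A[p](\bar{k})=\{O\}$ already suffices.
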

\begin{proof}
Similarly to the proof of Proposition $\ref{prop_order2}$, let $g\in G$ and $\psi=g-\mathrm{id}$. Then, as $G$ acts rigidly, $\psi\in\End^0(A)$ and from the fact that $g^p=\mathrm{id}$, we deduce that
$\Phi_p(g)=0$, where $\Phi_p$ is the $p$-th cyclotomic polynomial. Let $P\in A$ be a fixed point of $g\in G$. 
Then, we have that 
\begin{align*}
[p]P=P+P+\dots+P=g^{p-1}(P)+g^{p-2}(P)+\dots+g(P)+P=\Phi_p(g)(P)=O
\end{align*}
from which we deduce that $P\in A[p]$. As $A$ is supersingular, we conclude that $P=O$.
\end{proof}
As a consequence of this proposition, we deduce that $A/C_p$ can only possibly have one singular point.

\begin{lemma}[{\cite[Proposition 3.2]{Harder1975OnCurves}}]\label{lemma_g-inv}
Let $G$ be a finite subgroup of the group of automorphisms of a projective variety $X$ over an algebraically closed field of characteristic $p$. Let $\ell$ be a prime number which is coprime both to $p$ and $|G|$ and let $H^i(X,\mathbb{Q}_\ell)^G$ denote the subspace of $G$-invariants in $H^i(X,\mathbb{Q}_\ell)$. Then,
\begin{align*}
H^i(X/G,\mathbb{Q}_\ell)\cong H^i(X,\mathbb{Q}_\ell)^G.
\end{align*}
\end{lemma}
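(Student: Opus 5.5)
The plan is the classical transfer argument in étale cohomology; here $X$ may be any projective variety and $G$ any finite group of automorphisms. Let $\pi\colon X\to Y=X/G$ be the quotient morphism, which is finite and surjective, with $Y$ projective. Since the coefficients are $\mathbb{Q}_\ell$ with $\ell\nmid |G|$, averaging over $G$ is available. I will first work at the sheaf level on $Y$ with $\Lambda=\mathbb{Z}/\ell^n$ coefficients and then pass to the limit and tensor with $\mathbb{Q}_\ell$.

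The first step is to set up the sheaves. The group $G$ acts on the pushforward $\pi_*\Lambda_X$ because $\pi\circ g=\pi$ for all $g\in G$. I claim that the adjunction map $\Lambda_Y\to\pi_*\Lambda_X$ identifies $\Lambda_Y$ with $(\pi_*\Lambda_X)^G$. This can be checked on stalks at geometric points $\bar y$. Since $\pi$ is finite, $(\pi_*\Lambda_X)_{\bar y}=\bigoplus_{x\mapsto y}\Lambda$, the sum running over the geometric points of the fibre. The group $G$ permutes these points transitively, because geometric fibres of a quotient by a finite group are single $G$-orbits. Hence the invariants are the constants, which is $\Lambda$.

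The second step is to split off the invariants. Because $|G|$ is invertible in $\Lambda$, the idempotent $e=\frac{1}{|G|}\sum_{g}g$ acts on $\pi_*\Lambda_X$. It splits the sheaf as $\pi_*\Lambda_X=\Lambda_Y\oplus\ker e$, functorially in the $G$-action. Taking cohomology, and using that $\pi$ is finite so that $\pi_*$ is exact and $H^i(Y,\pi_*\Lambda_X)=H^i(X,\Lambda)$, we obtain
\[H^i(Y,\Lambda)\cong e\,H^i(X,\Lambda)=H^i(X,\Lambda)^G.\]
The last equality holds because $e$ is a projector onto the invariants, and taking invariants is exact when $|G|$ is invertible in $\Lambda$. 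One has to check that the action of $g$ on $H^i(Y,\pi_*\Lambda_X)$ agrees with the pullback $g^*$ on $H^i(X,\Lambda)$, possibly up to replacing $g$ by $g^{-1}$; this does not affect the invariants.

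The final step is to pass to $\mathbb{Z}_\ell$ and then $\mathbb{Q}_\ell$ coefficients. Projective limits commute with taking $G$-invariants, since invariants are the kernel of a finite product of maps. Tensoring with $\mathbb{Q}_\ell$ also preserves invariants, so the isomorphism persists.

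The main point requiring care is the stalk computation in the first step. One must verify that the geometric fibres of $X\to X/G$ are exactly $G$-orbits even at points with nontrivial stabilisers, including wild inertia. Since we take invariants of a permutation module, stabilisers do not matter, only transitivity. Transitivity follows from the standard fact that $\mathcal{O}_Y=(\pi_*\mathcal{O}_X)^G$ and that $G$ acts transitively on primes lying over a given prime in integral extensions of invariant rings. An alternative route would be the Hochschild–Serre spectral sequence, but that requires $G$ to act freely, so the transfer argument is preferable here.
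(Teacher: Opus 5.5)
Your proof is correct. Each step is sound:
\begin{itemize}
\item the identification $\Lambda_Y\cong(\pi_*\Lambda_X)^G$ via transitivity of $G$ on geometric fibres;
\item the splitting of $\pi_*\Lambda_X$ by the averaging idempotent, which exists exactly because $\ell\nmid|G|$;
\item the equality $H^i(Y,\pi_*\Lambda_X)=H^i(X,\Lambda)$, which holds because $\pi$ is finite;
\item the passage to $\mathbb{Z}_\ell$ and then $\mathbb{Q}_\ell$ coefficients, using left exactness of inverse limits and flatness of $\mathbb{Q}_\ell$ over $\mathbb{Z}_\ell$.
\end{itemize}

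The paper does not prove this lemma itself; it only quotes it from the cited reference. Your transfer argument is the standard proof of that result.

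The one step that deserves a precise reference is transitivity of $G$ on geometric fibres over non-closed points. Transitivity on primes lying over a given prime does not by itself give transitivity on geometric points; SGA1, Exp.~V, Prop.~1.1 does. Alternatively, you can avoid the issue. The kernel and cokernel of $\Lambda_Y\to(\pi_*\Lambda_X)^G$ are constructible, so it suffices to check stalks at closed points. There the residue fields equal $k$, and your prime-transitivity argument applies directly.
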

\newpage
Using this lemma, we will prove the following result:
\begin{proposition} \label{module_prop}
Let $A$ be an abelian surface over an algebraically closed field of characteristic $p$ and let $\ell$ and $G$ be as in Lemma \ref{lemma_g-inv}. Assume that $A/G$ is a generalised Kummer surface, so its singular locus only contains rational double points and let $\pi: X\rightarrow A/G$ denote a minimal resolution. Furthermore, let $E_{i,P}$ denote the irreducible exceptional divisors in $X$ arising from the blow-up of each point $P$ in the singular locus of $A/G$. Then, we have the following isomorphism of $\mathbb{Q}_\ell$-vector spaces:
\begin{align*}
H^2(X,\mathbb{Q}_\ell)\cong H^2(A,\mathbb{Q}_\ell)^G\oplus\bigoplus_{P\in\mathrm{Sing}(A/G)\;}\bigoplus_{E_{i,P}\subseteq\pi^{-1}(P)}\mathbb{Q}_\ell(-1)
\end{align*}
\end{proposition}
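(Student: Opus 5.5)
Combine Lemma \ref{lemma_g-inv}, applied to $X=A$, with the standard description of the cohomology of a resolution of rational double points. By Lemma \ref{lemma_g-inv}, $H^i(A/G,\mathbb{Q}_\ell)\cong H^i(A,\mathbb{Q}_\ell)^G$ for every $i$, so it suffices to compare $H^2(X,\mathbb{Q}_\ell)$ with $H^2(A/G,\mathbb{Q}_\ell)$. Write $Y=A/G$, let $\Sigma=\mathrm{Sing}(Y)$ (a finite set, since the singularities are isolated rational double points), and let $E=\pi^{-1}(\Sigma)=\bigcup_{P}\bigcup_i E_{i,P}$. The map $\pi$ is an isomorphism from $X\setminus E$ onto $Y\setminus\Sigma$.

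First, use the long exact sequences of cohomology with compact supports (equivalently, the proper base change / excision sequence for the closed subschemes $\Sigma\subset Y$ and $E\subset X$):
\begin{align*}
\cdots\to H^1(\Sigma)\to H^2_c(Y\setminus\Sigma)\to H^2(Y)\to H^2(\Sigma)\to\cdots,\\
\cdots\to H^1(E)\to H^2_c(X\setminus E)\to H^2(X)\to H^2(E)\to H^3_c(X\setminus E)\to\cdots.
\end{align*}
Since $\Sigma$ is finite, $H^1(\Sigma)=H^2(\Sigma)=0$, so $H^2(Y)\cong H^2_c(Y\setminus\Sigma)=H^2_c(X\setminus E)$.

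Next, compute the cohomology of $E$. Each fibre $\pi^{-1}(P)$ is a tree of smooth rational curves whose dual graph is an ADE Dynkin diagram, so $H^1(E)=0$, since a tree of $\mathbb{P}^1$'s has no $H^1$. Also $H^2(E)\cong\bigoplus_{P,i}H^2(E_{i,P})\cong\bigoplus_{P,i}\mathbb{Q}_\ell(-1)$, by a Mayer--Vietoris argument over the components, whose pairwise intersections are points.

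Finally, show that the restriction $H^2(X)\to H^2(E)$ is surjective, so that the sequence splits as $\mathbb{Q}_\ell$-vector spaces. The cycle classes $[E_{i,P}]\in H^2(X,\mathbb{Q}_\ell(1))$ restrict to $H^2(E)$ via the intersection matrix $(E_{i,P}\cdot E_{j,Q})$. This matrix is block diagonal with blocks equal to negatives of ADE Cartan matrices, which are negative definite and hence invertible, so the restriction map is surjective. Then $H^3_c(X\setminus E)\to H^3(X)$ is injective, and we get a short exact sequence
\[
0\to H^2(A,\mathbb{Q}_\ell)^G\to H^2(X,\mathbb{Q}_\ell)\to\textstyle\bigoplus_{P,i}\mathbb{Q}_\ell(-1)\to0.
\]
This splits as vector spaces, and even Galois-equivariantly, using the span of the classes $[E_{i,P}]$ as the complement; that gives the stated isomorphism.

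The main obstacle is the bookkeeping in the exact sequences, namely checking that $H^2(Y)\to H^2(X)$ is injective, i.e.\ that $H^1(E)=0$. This rests on the exceptional fibres being trees of rational curves, which is where the assumption that the singularities are rational double points is used, rather than elliptic singularities or singularities with non-rational exceptional curves. Surjectivity onto $H^2(E)$ follows from the negative definiteness of the intersection matrix.
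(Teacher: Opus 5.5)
Your proof is correct, but it takes a genuinely different route from the paper.

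\textbf{The paper's route.} The paper uses the Leray spectral sequence of $\pi$. Proper base change gives $R^1\pi_*\mathbb{Q}_\ell=0$ and shows that $R^2\pi_*\mathbb{Q}_\ell$ is a skyscraper sheaf on $\mathrm{Sing}(A/G)$. The only remaining obstruction is then the differential $d_3^{0,2}\colon H^0(A/G,R^2\pi_*\mathbb{Q}_\ell)\to H^3(A/G,\mathbb{Q}_\ell)$. The paper kills it globally by proving $H^3(A/G,\mathbb{Q}_\ell)=0$:
\begin{itemize}
\item since $X$ is a K3 surface, $b_1(X)=0$, so $H^1(A,\mathbb{Q}_\ell)^G\cong H^1(A/G,\mathbb{Q}_\ell)=0$;
\item $G$-equivariant Poincar\'e duality on $A$ together with Maschke's theorem then gives $H^3(A,\mathbb{Q}_\ell)^G=0$;
\item Lemma~\ref{lemma_g-inv} transfers this to $H^3(A/G,\mathbb{Q}_\ell)=0$.
\end{itemize}

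\textbf{How your argument corresponds.} Your excision sequences with compact supports replace the spectral sequence, and your $H^1(E)=0$ plays the role of $R^1\pi_*\mathbb{Q}_\ell=0$. Your surjectivity of $H^2(X)\to H^2(E)$ is exactly the statement that the edge map $H^2(X)\to E_2^{0,2}$ is onto, i.e.\ that $d_3^{0,2}=0$. You prove it locally rather than globally: you use the cycle classes of the $E_{i,P}$ and the invertibility of the negative definite ADE intersection matrix.

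\textbf{What each route buys.} Your route never uses that $X$ is a K3 surface. It only needs the local structure of the exceptional fibres: trees of smooth rational curves with nondegenerate intersection matrix. It also produces a canonical complement, the span of the exceptional classes. That complement is what one wants for the cup-product and Galois-equivariant refinement stated in the remark following the proposition. The paper's route avoids intersection theory and the compatibility of the cycle class map with restriction to the $E_{j,Q}$. The price is the extra global input $b_1(X)=0$. In exchange, it records $H^1(A,\mathbb{Q}_\ell)^G=0$ and $H^3(A/G,\mathbb{Q}_\ell)=0$ as by-products.
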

\begin{proof}
We consider the Leray spectral sequence for the morphism $\pi$ and the constant sheaf $\mathbb{Q}_\ell$:
\begin{align*}
E_2^{p,q} = H^p(A/G, R^q\pi_*\mathbb{Q}_\ell) \Rightarrow H^{p+q}(X, \mathbb{Q}_\ell)
\end{align*}
As $\pi$ is proper, for every geometric point $P\in A/G$ and $q\geq0$, a corollary of the proper base change theorem \cite[Corollaire 5.2]{Deligne1973TheorieSchemas} states that
\begin{align*}
(R^q\pi_*\mathbb{Q}_\ell)_P\cong H^q(X_P,\mathbb{Q}_\ell),
\end{align*}
where $X_P=X\times_{A/G}\{P\}$ is the geometric fibre over $P$. When $P$ is a smooth point of $A/G$, $X_P$ is a single point and, therefore,
\begin{align*}
H^q(X_{P},\mathbb{Q}_\ell)=\begin{cases}
\mathbb{Q}_\ell&\text{if }q=0,\\
0 & \text{otherwise.}\end{cases}\\
\end{align*}
When $P$ is a singular point of $A/G$, as $P$ is a rational double point by hypothesis, $X_P$ is a tree of smooth rational $(-2)$-curves $E_{i,P}$ whose dual graph $\Gamma$ is a simply laced Dynkin diagram of type ADE. From the fact that $\Gamma$ is connected, we deduce that $H^0(X_{P},\mathbb{Q}_\ell)=\mathbb{Q}_\ell$ and from the fact that each curve is isomorphic to $\mathbb{P}^1$ and that $\Gamma$ is a tree, we can use the Mayer–Vietoris long exact sequence to deduce that $H^1(X_{P},\mathbb{Q}_\ell)=0$. We can also check using Mayer-Vietoris that
\begin{align*}
H^2(X_P,\mathbb{Q}_\ell)=\bigoplus_{E_{i,P}\subseteq\pi^{-1}(P)} H^2(E_{i,P},\mathbb{Q}_\ell)=\bigoplus_{E_{i,P}\subseteq\pi^{-1}(P)}\mathbb{Q}_\ell(-1).
\end{align*}
Here, the Tate twist comes from the standard identification
$H^2(\mathbb P^1,\mathbb Q_\ell)\cong \mathbb Q_\ell(-1)$; as we are assuming that we are working over an algebraically closed field, this is simply a one-dimensional $\mathbb Q_\ell$-vector space. Finally, $H^q(X_P,\mathbb{Q}_\ell)=0$ for $q>2$.\\

Combining the information of the fibres, we deduce that $R^q\pi_*\mathbb{Q}_\ell=0$ unless $q=0$, in which case $R^0\pi_*\mathbb{Q}_\ell=\mathbb{Q}_\ell$, or $q=2$. In that case, we have seen that $(R^2\pi_*\mathbb{Q}_\ell)_P$ is zero for all smooth points $P\in A/G$, and therefore, $R^2\pi_*\mathbb{Q}_\ell$ is 
a skyscraper sheaf supported exactly on the singular locus of $A/G$. Its space of global sections is the direct sum of the stalks at each singular point:
\begin{align*}
H^0(A/G, R^2\pi_*\mathbb{Q}_\ell) = \bigoplus_{P \in \mathrm{Sing}(A/G)} H^2(X_P, \mathbb{Q}_\ell)=\bigoplus_{P\in\mathrm{Sing}(A/G)\;}\bigoplus_{E_{i,P}\in\pi^{-1}(P)}\mathbb{Q}_\ell(-1).
\end{align*}
As skyscraper sheaves on finitely many points have no higher cohomology, $H^p(A/G, R^2\pi_*\mathbb{Q}_\ell)=0$ for $p>0$.\\

From the fact that $A/G$ is a surface, we deduce that $H^p(A/G,\mathbb{Q}_\ell)=0$ whenever $p>4$. Therefore, the $E_2$-page of the spectral sequence is 
\[
\begin{array}{c|cccccc}
\vdots&\vdots&\vdots&\vdots&\vdots&\vdots&\iddots\\[5pt]
q=2 & H^0(A/G, R^2\pi_*\mathbb{Q}_\ell)& 0 & 0 & 0 & 0&\cdots \\[5pt]
q=1 & 0 & 0 & 0 & 0 & 0& \cdots \\[5pt]
q=0 & H^0(A/G,\mathbb{Q}_\ell) & H^1(A/G,\mathbb{Q}_\ell) & H^2(A/G,\mathbb{Q}_\ell) & H^3(A/G,\mathbb{Q}_\ell) & H^4(A/G,\mathbb{Q}_\ell)& \cdots \\[5pt]
\hline
 & p=0 & p=1 & p=2 & p=3 & p=4& \cdots
\end{array}
\]

From this description, it is easy to see that the differential maps $d_2^{p,q}:E_2^{p,q}\rightarrow E_2^{p+2,q-1}$ are all zero, so $E_3^{p,q}=E_2^{p,q}$ for all $ p,q\in\mathbb{Z}$. One can also check that $d_3^{p,q}=0$ whenever $(p,q)\neq(0,2)$, as in that case, we have that $d_3^{0,2}:H^0(A/G, R^2\pi_*\mathbb{Q}_\ell)\rightarrow H^3(A/G,\mathbb{Q}_\ell)$.\\

Therefore, we have that $E_4^{p,q}=E_2^{p,q}$ except when $(p,q)=(0,2)$ and $(p,q)=(3,0)$, as in those cases we have that $E_4^{0,2}=\ker(d_3^{0,2})$ and $E_4^{3,0}=\coker(d_3^{0,2})$. Finally, we see that for all $r\geq4$ and $p,q\in\Z$, $d_r^{p,q}=0$ and therefore, 
\begin{align*}
E_\infty^{p,q}=\begin{cases} 
\ker(d_3^{0,2}) &\text{if }(p,q)=(0,2),\\
\coker(d_3^{0,2}) &\text{if }(p,q)=(3,0),\\
E_2^{p,q} &\text{otherwise}.
\end{cases}\\
\end{align*}

As the spectral sequence converges to $H^{p+q}(X, \mathbb{Q}_\ell)$, for each $n$ there is a decreasing filtration
\begin{align*}
H^n(X,\mathbb{Q}_\ell)=F^0H^n(X,\mathbb{Q}_\ell)\supseteq F^1H^n(X,\mathbb{Q}_\ell)\supseteq\dots\supseteq F^{n+1}H^n(X,\mathbb{Q}_\ell)=0
\end{align*}
such that $E_{\infty}^{p,q}\cong F^pH^{p+q}(X,\mathbb{Q}_\ell)/F^{p+1}H^{p+q}(X,\mathbb{Q}_\ell)$ \cite[Section 20.4]{Fomenko2016HomotopicalTopology}.\\

Applying this result to $n=1$, as $E_\infty^{0,1}=0$, we deduce that $H^1(X,\mathbb{Q}_\ell)\cong E_\infty^{1,0}\cong H^1(A/G,\mathbb{Q}_\ell)$. As $X$ is a K3 surface, its first Betti number is zero, and therefore, applying Lemma \ref{lemma_g-inv}, we deduce that
\begin{align*}
H^1(A,\mathbb{Q}_\ell)^G=H^1(A/G,\mathbb{Q}_\ell)=0.\\
\end{align*}
Since $G$ acts on $A$ by automorphisms, all elements $g\in G$ satisfy that $\deg(g)=1$ and thus preserve the trace map $\mathrm{Tr}\colon H^4(A,\mathbb{Q}_\ell(2))\rightarrow\mathbb{Q}_\ell$, as $\mathrm{Tr}(g^*\gamma)=\deg(g)\mathrm{Tr}(\gamma)$. Therefore, the Poincaré pairing
\begin{align*}
H^1(A,\mathbb{Q}_\ell)\times H^3(A,\mathbb{Q}_\ell)\stackrel{\cup}{\longrightarrow}H^4(A,\mathbb{Q}_\ell)\stackrel{\mathrm{Tr}}{\longrightarrow}\mathbb{Q}_\ell(-2)
\end{align*}
is $G$-invariant and we have a $G$-equivariant isomorphism:
\begin{align*}
H^3(A,\mathbb Q_\ell)\cong H^1(A,\mathbb Q_\ell)^\vee(-2).\\  
\end{align*}

Furthemore, since $G$ is finite and $\mathbb Q_\ell$ has characteristic zero,
$\mathbb Q_\ell[G]$ is semisimple by Maschke's theorem. Hence, for any
finite-dimensional $\mathbb Q_\ell[G]$-representation $V$, $(V^\vee)^G\cong(V^G)^\vee$ and therefore
\begin{align*}
H^3(A,\mathbb Q_\ell)^G\cong
(\left(H^1(A,\mathbb Q_\ell)\right)^\vee(-2))^G
\cong
\left(H^1(A,\mathbb Q_\ell)^G\right)^\vee(-2)=0.   \\
\end{align*}
Applying Lemma \ref{lemma_g-inv} again, we deduce that $H^3(A/G,\mathbb Q_\ell)=0$. From this, we deduce that the map $d_3^{0,2}$ must be zero, and therefore, $E^{0,2}_\infty=H^0(A/G, R^2\pi_*\mathbb{Q}_\ell)$ and $E^{3,0}_\infty=0$.\\

When $n=2$, as we have seen that $E_\infty^{1,1}=0$, we deduce that $F^1 H^2(X,\mathbb{Q}_\ell)= F^2 H^2(X,\mathbb{Q}_\ell)$. Therefore,
\begin{align*}
H^2(A/G,\mathbb{Q}_\ell)&\cong F^2 H^2(X,\mathbb{Q}_\ell),&H^0(A/G, R^2\pi_*\mathbb{Q}_\ell)&\cong H^{2}(X,\mathbb{Q}_\ell)/F^{2}H^{2}(X,\mathbb{Q}_\ell).\\
\end{align*}
Hence there is a short exact sequence
\begin{align*}
0 \to H^2(A/G, \mathbb{Q}_\ell) \xrightarrow{\pi^*} H^2(X, \mathbb{Q}_\ell) \to H^0(A/G, R^2\pi_*\mathbb{Q}_\ell) \to 0.\\
\end{align*}
Since all terms are $\mathbb{Q}_\ell$-vector spaces, this sequence splits and therefore,
\begin{align*}
H^2(X, \mathbb{Q}_\ell)&\cong H^2(A/G, \mathbb{Q}_\ell)\oplus\,H^0(A/G, R^2\pi_*\mathbb{Q}_\ell)\\[5pt]
&\cong H^2(A/G, \mathbb{Q}_\ell) \oplus \bigoplus_{P \in \mathrm{Sing}(A/G)} \bigoplus_{E_{i,P} \subseteq \pi^{-1}(P)} \mathbb{Q}_\ell(-1).
\end{align*}

Finally, the result follows from the fact that $H^2(A/G, \mathbb{Q}_\ell)\cong H^2(A, \mathbb{Q}_\ell)^G$ by Lemma \ref{lemma_g-inv}.
\end{proof}
\begin{remark}
As $b_4(X)=b_4(A/G)=1$, the cup product endows the second cohomology groups of $X$ and $A/G$ with a bilinear product. The fact that $H^0(A/G, R^2\pi_*\mathbb{Q}_\ell)$ embeds into $H^2(X, \mathbb{Q}_\ell)$ allows us to extend the result in Proposition \ref{module_prop} to an isomorphism of $\mathbb{Q}_\ell$-algebras.  When $k$ is not algebraically closed, the isomorphism is 
$\mathrm{Gal}(\bar{k}/k)$-equivariant and therefore preserves the Galois-module structure on the cohomology groups. This has been proved for Kummer surfaces by Overkamp \cite{Overkamp2021DegenerationSurfaces} and Lazda–Skorobogatov \cite{Lazda2023ReductionCase}, and very recently for generalised Kummer surfaces by Zhao \cite[Proposition 2.10]{Zhao2026TheCase}.
\end{remark}\newpage
\begin{corollary} \label{three-five_cor}
Let $A$ be a supersingular abelian surface in characteristic $p\in\{3,5\}$ and let $C_p$ act on $A$. Then, $A/C_p$ is not a generalised Kummer surface.
\end{corollary}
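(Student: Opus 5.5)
Suppose for contradiction that $A/C_p$ is a generalised Kummer surface, and let $\pi\colon X\to A/C_p$ be the minimal resolution, so that $X$ is a K3 surface with $b_2(X)=22$. The plan is to count $b_2(X)$ in two ways and reach a contradiction.

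First, by Proposition \ref{fixorig_prop}, the generator of $C_p$ fixes only the origin $O$. Hence $A/C_p$ has at most one singular point, namely the image of $O$. By Theorem \ref{Katsura_theorem}, the quotient must be singular, so there is exactly one singular point, and it is a rational double point. Its resolution therefore contributes at most $8$ exceptional curves, since an ADE configuration of rank $r$ in the tables of Subsection \ref{ss_rdp} that can occur here has $r\le 8$. Even without that bound, the Tjurina/Dynkin rank is bounded by the rank of the Néron--Severi lattice, so the contribution is at most $20$.

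Second, we apply Proposition \ref{module_prop} with a prime $\ell\ne p$. Here $\ell$ is automatically coprime to $|G|=p$. This gives
\[
22=b_2(X)=\dim H^2(A,\mathbb{Q}_\ell)^{C_p}+r,
\]
where $r$ is the number of exceptional curves. To compute the invariant part, use $H^2(A,\mathbb{Q}_\ell)\cong\wedge^2 H^1(A,\mathbb{Q}_\ell)=\wedge^2 V_\ell(A)^\vee$. By Proposition \ref{rigid_propo} (which applies to $\ell\ne p$ even though $p\mid|G|$, by the same fixed-point argument on $\ell$-power torsion), the generator $g$ acts on the $4$-dimensional space $V_\ell(A)$ with primitive $p$-th roots of unity as eigenvalues.

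\textbf{Case $p=5$.} The eigenvalues are $\zeta,\zeta^2,\zeta^3,\zeta^4$. The pairwise products equal to $1$ are $\zeta\zeta^4$ and $\zeta^2\zeta^3$, so $\dim H^2(A)^{C_5}=2$. Then $r=20$, which is impossible for a single rational double point, whose rank is at most $8$.

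\textbf{Case $p=3$.} The action is symplectic, so the eigenvalues are $\zeta,\zeta,\bar\zeta,\bar\zeta$, with determinant $1$ on $\wedge^2$ of a $2$-dimensional piece. The invariant pairs are the $2\cdot2=4$ products $\zeta\bar\zeta$, so $\dim H^2(A)^{C_3}=4$. Then $r=18$, again far more than any single rational double point allows. More structurally, these eigenvalue counts match the non-supersingular case, where the $4$ invariant classes together with $3$ points of type $E_6$ give $4+18=22$; with only one singular point we cannot reach $22$.

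The main obstacle is justifying the eigenvalue description on $V_\ell(A)$ when $p\mid|G|$, since Proposition \ref{rigid_propo} was stated under $p\nmid|G|$. I would rederive it directly. Rigidity gives that $g^k-\mathrm{id}$ is an isogeny for $0<k<p$, so $1$ is not an eigenvalue of $g^k$ on $V_\ell(A)$, and hence all eigenvalues are primitive $p$-th roots of unity. The characteristic polynomial of $g$ has rational coefficients, which forces the multiset of eigenvalues to be as stated for $p=5$ (one copy of the full Galois orbit of size $4$) and to consist of conjugate pairs for $p=3$. Nothing beyond this and the rank bound $r\le 8$ is needed.
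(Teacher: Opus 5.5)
There is a genuine gap, at the step you treat as routine: the bound $r\le 8$. The tables of Subsection~\ref{ss_rdp} contain $A_n$ and $D_n$ singularities of every rank. As written, nothing in your argument prevents the unique singular point from being of type $A_{18}$ or $D_{18}$ when $p=3$, or $A_{20}$ or $D_{20}$ when $p=5$. These match your counts $22-4=18$ and $22-2=20$ exactly, so the $b_2$ computation alone gives no contradiction.

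Your fallback does not help either. A K3 surface in characteristic $p$ can have $\rho=22$, and here $X$ would be supersingular. So the negative definite lattice spanned by the exceptional curves can have rank up to $21$, not $20$. In any case $18\le 20$ and $20\le 20$, so that bound contradicts nothing.

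The missing ingredient is Lemma~\ref{prop_fund}, which is the paper's key idea. The stabiliser of $O$ is all of $C_p$, which is trivially normal, so $C_p$ embeds in the local fundamental group of the singular point. Now read Artin's tables:
\begin{itemize}
\item In characteristic $3$, every $A_n$ and $D_n$ has local fundamental group of order prime to $3$. Among the $E$-types, only $E_6^1$, $E_7^1$ and $E_8^2$ have order divisible by $3$.
\item In characteristic $5$, the only type whose local fundamental group has order divisible by $5$ is $E_8^1$.
\end{itemize}
This is what forces $r\in\{6,7,8\}$, respectively $r=8$.

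With that step added, your proof is correct and has the same structure as the paper's. Your exact computation $\dim H^2(A,\mathbb{Q}_\ell)^{C_p}=4$ (for $p=3$) and $2$ (for $p=5$), via the eigenvalues of $g$ on $V_\ell(A)$ with $\ell\ne p$, is right. It is more than needed, though: the paper just uses $\dim H^2(A,\mathbb{Q}_\ell)^{G}\le 6$ to get $r\ge 16>8$.
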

\begin{proof}
For $A/C_p$ to be a generalised Kummer surface, $C_p$ has to act rigidly on $A$ and the singularities of $A/C_p$ can only possibly be rational double points. By Proposition \ref{fixorig_prop}, as $C_p$ only fixes the origin, $A/C_p$ can only possibly have one singular point, and from Lemma \ref{prop_fund}, as the local fundamental group of the singular point must contain $C_p$, we deduce that the only possibilities are that the singular point is of type $E_6^1$, $E_7^1$ or $E_8^2$ if $p=3$ or $E_8^1$ if $p=5$. In particular, the number of exceptional curves $n$ in the desingularisation of $A/C_p$ would satisfy that $n\in\{6,7,8\}$ if $p=3$ and $n=8$ if $p=5$.\\

For any K3 surface $X$, $\dim(H^2(X,\mathbb{Q}_\ell))=22$ and for every abelian surface $A$, $\dim(H^2(A,\mathbb{Q}_\ell))=6$ \cite{Liedtke2013AlgebraicCharacteristic}. Therefore, we deduce that $A/C_p$ cannot be a generalised Kummer surface, as by Proposition \ref{module_prop}, we would deduce that the number of exceptional divisors $n$ in the resolution of the singular point should be
\begin{align*}
&n=\dim(H^2(X,\mathbb{Q}_\ell))-\dim(H^2(A,\mathbb{Q}_\ell)^G)\geq \dim(H^2(X,\mathbb{Q}_\ell))-\dim(H^2(A,\mathbb{Q}_\ell))=16.& &\qedhere
\end{align*}
\end{proof}
Combining Proposition \ref{quot_prop} with Corollaries \ref{two_cor} and \ref{three-five_cor}, we deduce the following:
\begin{theorem}
If $A$ is a supersingular abelian surface in characteristic $p$ and $G$ a group acting on $A$ such that $p\mid |G|$, the desingularisation of $A/G$ is never a K3 surface.\\
\end{theorem}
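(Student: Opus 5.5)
The plan is to assemble the theorem from the reduction in Proposition \ref{quot_prop} and the two base cases, Corollaries \ref{two_cor} and \ref{three-five_cor}. Throughout, ``the desingularisation of $A/G$ is a K3 surface'' is exactly the statement that $A/G$ is a generalised Kummer surface, and $G$ acts preserving the group law.

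First we dispose of large primes. By Theorem \ref{Katsura_theorem}, if $A/G$ were a generalised Kummer surface, the action of $G$ would be rigid and symplectic. Theorem \ref{th_rigid_actions} then restricts $G$ to an explicit finite list. None of the orders in that list is divisible by a prime $p\geq 7$, so for $p\geq7$ with $p\mid |G|$ there is nothing to prove. Hence we may assume $p\in\{2,3,5\}$.

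Next we verify the hypothesis of Proposition \ref{quot_prop}: for $p\in\{2,3,5\}$ and $A$ supersingular, $A/C_p$ is never a generalised Kummer surface.
\begin{itemize}
\item For $p=2$ this is Corollary \ref{two_cor}. It uses Proposition \ref{prop_order2}, which says the only rigid order-two action is $\iota$, together with Katsura's result that $A/\iota$ has elliptic singularities when $A$ is supersingular.
\item For $p=3,5$ this is Corollary \ref{three-five_cor}. There Proposition \ref{fixorig_prop} allows at most one singular point, which contributes at most $8$ exceptional curves. Proposition \ref{module_prop} instead forces at least $22-6=16$.
\end{itemize}

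Having the hypothesis, Proposition \ref{quot_prop} yields the conclusion for every $G$ with $p\mid|G|$. Inside that proposition, Proposition \ref{primegroups_propo} reduces to the groups $C_2,C_4,C_6,Q_{12}$ when $p=2$, to $C_3,C_6,\SL_2(\FF_3)$ when $p=3$, and to $C_5$ when $p=5$. Each of these contains a normal $C_p$, so $A\to A/G$ factors through $A/C_p$. If $A/G$ were a generalised Kummer surface, $A/C_p$ would have only isolated singularities. It would also have a non-rational-double-point singularity, hence be rational by Proposition \ref{ellsing_prop}. That gives a dominant map from a rational surface onto a K3 surface, which is impossible.

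The main obstacle is a small gap in the last step of Proposition \ref{quot_prop}. The rational surface (the resolution of $A/C_p$) maps only rationally to the resolution of $A/G$. The contradiction should therefore be phrased via unirationality and Kodaira dimension. A K3 surface has $\kappa=0$ and a nonzero global $2$-form. A dominant rational map from a rational surface would force that $2$-form to pull back to zero when the map is separable. The map $A/C_p\to A/G$ is a separable quotient by $G/C_p$, so the pullback of the regular $2$-form on the K3 surface is a nonzero $2$-form on a rational surface. That is a contradiction, since $h^{2,0}=0$ for rational surfaces. I would also state explicitly that the hypotheses of Proposition \ref{ellsing_prop} (no fixed curves for $C_p$) follow from rigidity. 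With these checks, the theorem follows.
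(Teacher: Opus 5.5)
Your proposal is correct and follows the paper's own argument: the theorem is obtained by feeding Corollaries~\ref{two_cor} and~\ref{three-five_cor} into Proposition~\ref{quot_prop}. Your separability patch for the last step of that proposition is a genuine sharpening. In characteristic $p$ supersingular K3 surfaces can be unirational, so the contradiction really does need the dominant map from the rational surface to be \emph{separable}, so that the nonzero $2$-form pulls back to a nonzero $2$-form.

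One small correction: $C_3$ is not normal in $\SL_2(\FF_3)$, which has four Sylow $3$-subgroups, as Section~\ref{sec_examples} itself notes. So ``a separable quotient by $G/C_p$'' does not make sense there. Nothing is lost, however. The map $A\to A/G$ factors through $A/H$ for any subgroup $H\le G$, and $k(A)^H/k(A)^G$ is separable, being a subextension of the Galois extension $k(A)/k(A)^G$.
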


\section{Examples of quotients of products of elliptic curves} \label{sec_examples} 
In Section \ref{computingsings_section}, we compiled a list of possible groups $G$ and conditions on the abelian surface $A$ under which $A/G$ could be a generalised Kummer surface in positive characteristic. However, it is not immediately clear why such surfaces should actually exist.\\

In this section, we will show that they do exist by constructing explicit examples in the special case where $A$ is the product of two elliptic curves. The reason why it is convenient to work in this setup is that, particularly when $A = E \times E$, the endomorphism ring $\End(A)$ admits a simple description: it is isomorphic to the matrix ring $M_2(\End(E))$. As a result, group actions on $A$ can be understood as matrix groups with entries in the ring of integers of a number field.\\

Recall from Table \ref{table1} that there are generalised Kummer surfaces that occur only in positive characteristic, and only when $A$ is a supersingular abelian surface. These are the cases where $G$ is $C_5$, $C_{10}$, $C_{12}$, $Q_{16}$, $Q_{24}$, $\ESL_2(\FF_3)$, and $\SL_2(\FF_3)$. The existence of generalised Kummer surfaces for the cyclic groups in this list was proven by Katsura \cite[Examples 1--11]{Katsura1987GeneralizedP}, and for the remaining groups, by Rybakov \cite[Lemmas 6.6 and 6.7]{Rybakov2024GeneralizedFields}. The construction of these examples is somewhat involved, and for that reason, we will not discuss them in this paper.\\

Instead, we will focus on the cases where $A$ is not supersingular. For cyclic quotients, we will describe a geometric method for constructing a rigid and symplectic action on a product of two elliptic curves, and explain how to explicitly compute the quotient surface under this action.\\

We will also provide examples of the actions of $Q_8$, $Q_{12}$, and $\SL_2(\FF_3)$ on products of elliptic curves. Furthermore, by analysing the fixed points of these actions, we will describe the singularities of the corresponding quotients. However, we will not give explicit defining equations for these quotient surfaces. All computations described in this section, together with their implementations in Magma, are available in \href{https://github.com/AlvaroGohe/Generalised-Kummer-surfaces}{this repository}.\\


Let $E$ be an elliptic curve given in Weierstrass form by $\mathbb{V}(f) \subset \mathbb{P}^2$, where
\begin{align*}
f = y^2z + a_1xyz + a_3yz^2 - (x^3 + a_2 x^2z + a_4 xz^2 + a_6z^3).
\end{align*}

We will now explain how to construct the quotient of the product of two elliptic curves by actions of the groups $C_2$, $C_3$, $C_4$, or $C_6$.\\
\newpage
\subsection{The action of order two} \label{order2_subsection}

The Kummer surface associated with the product of two elliptic curves has been extensively studied. For the sake of completeness, we describe here how to compute a model in a similar spirit to the description given by Shioda \cite{Shioda1974Kummer2}.\\

Let $E_1$ and $E_2$ be elliptic curves given in Weierstrass form by the equations
\begin{align*}
E_1\colon\quad & y^2z + a_1xyz + a_3yz^2 = x^3 + a_2 x^2z + a_4 xz^2 + a_6z^3, \\
E_2\colon\quad & y^2z + b_1xyz + b_3yz^2 = x^3 + b_2 x^2z + b_4 xz^2 + b_6z^3.\\
\end{align*}

The involution that sends any element of $E_1 \times E_2$ to its inverse is given by the map\\[5pt]
\begin{adjustbox}{width=\textwidth}
\begin{minipage}{1.1\textwidth}
\begin{align*}
\iota \colon\quad E_1 \times E_2 &\longrightarrow E_1 \times E_2 \\
([x_1:y_1:z_1], [x_2:y_2:z_2]) &\longmapsto ([x_1:-y_1 - a_1x_1 - a_3z_1 : z_1], [x_2:-y_2 - b_1x_2 - b_3z_2 : z_2]).\\[5pt]
\end{align*}
\end{minipage}
\end{adjustbox}

We now construct the Kummer surface $\Kum(E_1 \times E_2)$, which is the quotient of $E_1 \times E_2$ by the action of $\iota$. We define a grading on $\mathbb{P}^2 \times \mathbb{P}^2$ by setting the multidegree of $\{x_1, y_1, z_1\}$ to be $\binom{1}{0}$ and the multidegree of $\{x_2, y_2, z_2\}$ to be $\binom{0}{1}$. It is easy to see that the functions $\{x_1, z_1, x_2, z_2\}$ are invariant by the action of $\iota$. Moreover, the function
\begin{align} \label{kum2coord}
w = z_1z_2 \big( 2y_1y_2 + (a_1x_1 + a_3z_1)y_2 + (b_1x_2 + b_3z_2)y_1 \big)
\end{align}
is also invariant under $\iota$ and has multidegree $\binom{2}{2}$. One can check that $x_1, z_1, x_2, z_2$ and $w$ generate all invariant functions in the function field of $E_1 \times E_2$. These invariants satisfy a relation of the form
\begin{align} \label{rel2}
w^2 + w z_1z_2(a_1 x_1 + a_3 z_1)(b_1 x_2 + b_3 z_2) = z_1z_2 f_6(x_1, x_2, z_1, z_2),
\end{align}
for some degree six polynomial $f_6 \in k[x_1, x_2, z_1, z_2]$.\\

Geometrically, there is another way to arrive at this model, which we will now describe, as it will connect nicely with the method used to compute the quotient by the action of $C_4$. Consider the map
\begin{align*}
\psi \colon\quad  E_1 \times E_2 &\longrightarrow \mathbb{P}^1 \times \mathbb{P}^1 \\
([x_1:y_1:z_1], [x_2:y_2:z_2]) &\longmapsto ([x_1 : z_1], [x_2 : z_2]),
\end{align*}
which assigns to a pair of points $(P_1, P_2)$ the vertical lines $\ell_{P_1}$ and $\ell_{P_2}$ passing through them. Note that this map is not defined at the curves $\{O\} \times E_2$ and $E_1 \times \{O\}$, but one can check that it extends to a morphism. Observe that $(P_1, P_2)$ and $(-P_1, -P_2)$ are mapped to the same point in $\mathbb{P}^1 \times \mathbb{P}^1$ under $\psi$, and therefore the map descends to a morphism
\begin{align*}
\psi_K \colon\quad \Kum(E_1 \times E_2) \rightarrow \mathbb{P}^1 \times \mathbb{P}^1.\\[-10pt]
\end{align*}

This is generically a $2$-to-$1$ cover: the fibre over a generic point $(\ell_{P_1}, \ell_{P_2})$ consists of the four points $(\pm P_1, \pm P_2)$, which yield two distinct points in the Kummer surface.\\

Assuming that the characteristic of the base field is not two, $\Kum(E_1 \times E_2)$ can be described as a double cover branched along the ramification locus of $\psi_K$. The ramification occurs precisely when $P_1 = -P_1$ or $P_2 = -P_2$. Therefore, $\Kum(E_1 \times E_2)$ can be defined by the equation
\[
v^2 = z_1z_2(x_1 - \alpha_1z_1)(x_1 - \alpha_2z_1)(x_1 - \alpha_3z_1)(x_2 - \beta_1z_2)(x_2 - \beta_2z_2)(x_2 - \beta_3z_2),
\]
where $\alpha_i$ and $\beta_j$ are the $x$-coordinates of the $2$-torsion points of $E_1$ and $E_2$, respectively.
 This means that\\
\begin{adjustbox}{width=\textwidth}
\begin{minipage}{1.1\textwidth}
\begin{align*}
v^2=z_1z_2(x_1^3+a_2 x_1^2z_1+a_4 x_1z_1^2+a_6z_1^3+\tfrac{1}{4}(a_1x_1+a_3z_1)^2)(x_2^3+b_2 x_2^2z_2+b_4 x_2z_2^2+b_6z_2^3+\tfrac{1}{4}(b_1 x_2 + b_3 z_2)^2)\\[5pt]
\end{align*}
\end{minipage}
\end{adjustbox}
The singular locus of this surface is one-dimensional for fields of characteristic two; however, by considering the change of variables
\begin{align*}
w=2v-\tfrac{1}{2}(a_1 x_1 + a_3 z_1) (b_1 x_2 + b_3 z_2).
\end{align*}
Clearing the powers of two of the denominator, we recover the Equation \ref{rel2}, which only has isolated singularities.\\

Therefore, $\Kum(E_1\times E_2)$ admits a model as a hypersurface $X$ inside the scroll $\mathbb{P}(M)$ where
\begin{align*}
M=\begin{pmatrix}
1 &1 &0 &0 &2\\
0 &0& 1 &1 &2\\
\end{pmatrix}.
\end{align*}

While this model works well if we want to work in affine patches of it, it is not very convenient for other tasks such as computing the singular points of the variety. For that purpose, we can re-embed our variety inside $\mathbb{P}(1,1,1,1,2)$ by considering the image through the following map:
\begin{align*}
X&\longrightarrow\mathbb{P}(1,1,1,1,2)\\
[x_1:z_1:x_2:z_2:w]&\longmapsto[x_1x_2:x_1z_2:x_2z_1:z_1z_2:w]\\[-5pt]
\end{align*}
Then, $\Ktwo$ is given by the intersection of a polynomial of weighted degree two, and the image of the Equation \ref{rel2} inside $\mathbb{P}(1,1,1,1,2)$, which has weighted degree four.\\

If the characteristic of the base field is not two, this surface has sixteen singularities of type $A_1$ corresponding to the fixed points of $E_1\times E_2$ by the action, which is its $2$-torsion subgroup.\\

If the characteristic of the base field is two, then the number and type of singular points depend on the $p$-rank of $E_1\times E_2$. 
\begin{itemize}
    \item If the $p$-rank is $2$, it has four rational singularities of type $D^1_4$.
    \item If the $p$-rank is $1$, it has two rational singularities of type $D^2_8$.
    \item If the $p$-rank is $0$, it has one elliptic singularity of type $\Circled{19}_{\,0}$ in the sense of Wagreich \cite{Wagreich1970EllipticSurfaces} or $A_{\ast,o}+A_{\ast,o}+A_{\ast,o}+A_{\ast,o}+A_{\ast,o}$ in the sense of Laufer \cite{Laufer1977OnSingularities}. In this case, the Kummer surface associated to this surface is not a K3 surface, as a consequence of Proposition \ref{ellsing_prop}.
 
\end{itemize}
\begin{figure}[H]
\begin{center}
\begin{tikzpicture}
	\begin{pgfonlayer}{nodelayer}
		\node [style=black with white font] (1) at (0, 0) {$-3$};
		\node [style=white with black border] (3) at (0.951057, 0.309017) {};
		\node [style=white with black border] (7) at (0, 1) {};
		\node [style=white with black border] (8) at (-0.951057, 0.309017) {};
		\node [style=white with black border] (9) at (0.587785, -0.809017) {};
		\node [style=white with black border] (10) at (-0.587785, -0.809017) {};
	\end{pgfonlayer}
	\begin{pgfonlayer}{edgelayer}
		\draw (1) to (7);
		\draw (1) to (8);
		\draw (1) to (10);
		\draw (1) to (9);
		\draw (1) to (3);
	\end{pgfonlayer}
\end{tikzpicture}
\vspace{5pt}
\caption{Resolution graph of the $A_{\ast,o}+A_{\ast,o}+A_{\ast,o}+A_{\ast,o}+A_{\ast,o}$ singularity.}
\end{center}
\end{figure}

\subsection{The action of order three}  \label{order3_subsection}

This construction has been described by many authors, most notably by van Luijk and by Fearnley, Kisilevsky and Kuwata in the context of studying cubic points of cubic curves \cite{vanLuijk2007CubicSurfaces, Fearnley2012VanishingCurves}. The explicit construction of $(E \times E)/C_3$ was recently described by Kondō and Mukai \cite[Section 7.2]{Kondo2024TheTwo} for elliptic curves expressed as cubics of the form
\begin{align*}
E:\quad x^3 + y^3 + z^3 - \lambda xyz = 0.\\[-5pt]
\end{align*}
We follow similar methods to construct $(E \times E)/C_3$, but for $E$ given in Weierstrass form. We define an automorphism of $E \times E$ of order three  by
\begin{align*}
\tau_3:\quad E\times E&\longrightarrow E\times E\\
(P,Q)&\longmapsto (-P-Q,P)\\
\end{align*}
Geometrically, this corresponds to the following operation: for every two points $P, Q \in E$, consider the line through them, which intersects $E$ in a third point $-P - Q$. Then, the action $\tau_3$ permutes cyclically the pairs of points in the line, as we can see in the diagram:\\

\begin{figure}[H]
\centering
\begin{tikzpicture}
    \begin{axis}[
        axis lines=center,
        xlabel={$x$},
        ylabel={$y$},
        xtick=\empty,
        ytick=\empty,
        xmin=-1.5, xmax=2,
        ymin=-2, ymax=2,
        samples=200,
        domain=-1.5:2,
        restrict y to domain=-2:2,
    ]
    
        \addplot [
            darkred,
            thick,
            samples=300,
            domain=-1:1/2,
        ]
        ({\x}, {sqrt((\x - 1/2)*(\x - 1)*(\x + 1))});

        \addplot [
            darkred,
            thick,
            samples=300,
            domain=-1:1/2,
        ]
        ({\x}, {-sqrt((\x - 1/2)*(\x - 1)*(\x + 1))});
        
          \addplot [
            darkred,
            thick,
            samples=200,
            domain=1:2,
        ]
        ({\x}, {sqrt((\x - 1/2)*(\x - 1)*(\x + 1))});

        \addplot [
            darkred,
            thick,
            samples=200,
            domain=1:2,
        ]
        ({\x}, {-sqrt((\x - 1/2)*(\x - 1)*(\x + 1))});

        \addplot[domain=-2:2] {0.375999 + 0.241225*(0.95 + x)};
        
        \node[above left] at (axis cs:-1, 0.375999) {$P$};
        \node[fatpoint] (P) at (axis cs:-0.95, 0.375999) {};
        \node[pointlabel] at (P) {1};
        \node[above right] at (axis cs:0.15, 0.629285) {$Q$};
        \node[fatpoint] (Q) at (axis cs:0.1, 0.629285) {};
        \node[pointlabel] at (Q) {2};
        
        \addplot[only marks, mark=*, mark size=1.5pt] coordinates {(1.40819, 0.944854)};
        \node[above left] at (axis cs:1.40819, 0.944854) {$-P-Q$};

    \end{axis}
\end{tikzpicture} \hspace{25pt}
\begin{tikzpicture}
    \begin{axis}[
        axis lines=center,
        xlabel={$x$},
        ylabel={$y$},
        xtick=\empty,
        ytick=\empty,
        xmin=-1.5, xmax=2,
        ymin=-2, ymax=2,
        samples=200,
        domain=-1.5:2,
        restrict y to domain=-2:2,
    ]
    
        \addplot [
            darkred,
            thick,
            samples=300,
            domain=-1:1/2,
        ]
        ({\x}, {sqrt((\x - 1/2)*(\x - 1)*(\x + 1))});

        \addplot [
            darkred,
            thick,
            samples=300,
            domain=-1:1/2,
        ]
        ({\x}, {-sqrt((\x - 1/2)*(\x - 1)*(\x + 1))});
        
          \addplot [
            darkred,
            thick,
            samples=200,
            domain=1:2,
        ]
        ({\x}, {sqrt((\x - 1/2)*(\x - 1)*(\x + 1))});

        \addplot [
            darkred,
            thick,
            samples=200,
            domain=1:2,
        ]
        ({\x}, {-sqrt((\x - 1/2)*(\x - 1)*(\x + 1))});

        \addplot[domain=-2:2] {0.375999 + 0.241225*(0.95 + x)};
        
        \node[above left] at (axis cs:-1, 0.375999) {$P$};
        \node[fatpoint] (P) at (axis cs:-0.95, 0.375999) {};
        \node[pointlabel] at (P) {2};
        \node[above left] at (axis cs:1.3, 0.944854) {$-P-Q$};
        \node[fatpoint] (Q) at (axis cs:1.40819, 0.944854) {};
        \node[pointlabel] at (Q) {1};
        
        \addplot[only marks, mark=*, mark size=1.5pt] coordinates {(0.1, 0.629285)};
        \node[above right] at (axis cs:0.1, 0.629285) {$Q$};
        
    \end{axis}
\end{tikzpicture} 
\caption{Action of $\tau_3$ on $E\times E$.}
\end{figure}
Let $\Kthree$ be the quotient of $E \times E$ by this action. We can find a model for $\Kthree$ as follows. Each pair $(P, Q)$ determines a line $\ell_{P,Q}$ in $\mathbb{P}^2$, and this line corresponds to a point in the dual projective plane $\mathbb{P}^{2\vee}$. Therefore, we can define a map
\begin{align*}
\psi\colon\quad E\times E&\longrightarrow \mathbb{P}^{2,\vee}\\
(P,Q)&\longmapsto \ell_{P,Q}\\
\end{align*}

As for any two points $P,Q$, any pair of points in the set $\{P,Q,-P-Q\}$ is sent to the same line, the map $\psi$ factors through the quotient. Hence, we obtain an induced map $\psi_K: \Kthree \rightarrow \mathbb{P}^2$. Generically, the preimage of a point under $\psi$ consists of $3!=6$ ordered pairs $(P, Q)$, but three of these are identified in $\Kthree$, so $\psi_K$ is a double cover of $\mathbb{P}^2$.\\

To describe $\Kthree$ explicitly, it suffices to compute the ramification locus of $\psi_K$. A point $(P, Q)$ lies in the ramification locus if and only if two of the pairs in the orbit $(P, Q)$, $(-P - Q, P)$ and $(Q, -P - Q)$ are equal in $\Kthree$. This happens precisely when the line $\ell_{P,Q}$ is tangent to $E$ at one of the three points. Therefore, the ramification locus is the dual curve $E^\vee \subset \mathbb{P}^{2\vee}$, which is the image of the map
\begin{align*}
\phi\colon\quad E&\longrightarrow \mathbb{P}^{2,\vee}\\
P&\longmapsto\Big[\frac{\partial f}{\partial x}(P):\frac{\partial f}{\partial y}(P):\frac{\partial f}{\partial z}(P)\Big]\\[-5pt]
\end{align*}
This curve $E^\vee$ is a sextic, defined by the vanishing of a homogeneous degree six polynomial $h_6 \in k[u,v,w]$. Therefore, $\Kthree$ can be described as a double cover of $\mathbb{P}^2$ branched over $\mathbb{V}(h_6)$, and thus given by an equation of the form
\begin{equation} \label{equation1}
t^2 = h_6(u, v, w)
\end{equation}
in weighted projective space $\mathbb{P}(1,1,1,3)$, assuming the characteristic is not two.\\

In characteristic two, this model of a curve would give rise to an inseparable cover. To construct a valid model in this case, we start in characteristic zero, find a model there that reduces well at two, and then specialise it to characteristic two. When we reduce modulo $2$ the coefficients of the model given by Equation \ref{equation1}, we obtain that $t^2 = g_3^2$ for some polynomial $g_3$ of degree three. Then, by lifting $g_3$ to characteristic zero and setting
\[
f_6 = \frac{1}{4}(h_6 - g_3^2),
\]
we obtain the following model of $\Kthree$ in characteristic zero which reduces well at two
\[
t^2 + g_3(u,v,w)t + f_6(u,v,w) = 0.\\
\]
\newpage

In characteristic different from three, $\Kthree$ has nine $A_2$ singularities, arising from the fixed points of the action of $C_3$. These correspond to the inflection points of $E$, which are the $3$-torsion points $E[3]$.\\

In characteristic three, the fixed locus of the action depends on whether $E$ is ordinary or supersingular. If $E$ is ordinary, then $E[3] \cong \mathbb{Z}/3\mathbb{Z}$, and $\Kthree$ has three singularities. The resolution of these singularities reveals that the intersection matrix of the exceptional divisors is of type $E_6$, and the Tjurina number is seven, showing that the singularities are of type $E_6^1$.\\

If $E$ is supersingular, then we can check that $(E\times E)/C_3$ has a single elliptic singularity, of type $D_{4,***}$ in the notation of Laufer \cite{Laufer1977OnSingularities}.
\begin{figure}[H]
\begin{center}
\begin{tikzpicture}
	\begin{pgfonlayer}{nodelayer}
		\node [style=black with white font] (1) at (2, 2) {$-3$};
		\node [style=white with black border] (3) at (2, 1) {};
		\node [style=white with black border] (4) at (2, 0) {};
		\node [style=white with black border] (5) at (3, 0) {};
		\node [style=white with black border] (7) at (1, 0) {};
		\node [style=white with black border] (8) at (0, 0) {};
		\node [style=white with black border] (9) at (4, 0) {};
	\end{pgfonlayer}
	\begin{pgfonlayer}{edgelayer}
		\draw [style=new edge style 0] (1) to (4);
		\draw [style=new edge style 0] (4) to (3);
		\draw [style=new edge style 0] (4) to (5);
		\draw (8) to (4);
		\draw (5) to (9);
	\end{pgfonlayer}
\end{tikzpicture}
\end{center}
\caption{Resolution graph of the $D_{4,***}$ singularity.}
\end{figure}

In the cases where the singularities are rational double points, applying the adjunction formula, one can check that the canonical divisor of $\Kthree$ is trivial and therefore, these are K3 surfaces. If the base field has characteristic three and $E$ is supersingular, $(E\times E)/C_3$ is a rational surface by Proposition \ref{ellsing_prop}.

\subsection{The action of order four} \label{order4_subsection}
 Whenever we have that $A=E\times E$, we can define an action of order $4$ on $A$ by
\begin{align*}
\tau_4:\quad E\times E&\longrightarrow E\times E\\
(P,Q)&\longmapsto (-Q,P)\\[-5pt]
\end{align*}

Then, $\tau_4(\tau_4(P,Q))=(-P,-Q)$ and therefore, $\tau_4^2$ is the involution $\iota$. The action can be described through the following diagram:
\begin{figure}[H]
\begin{center}
\begin{tikzpicture}
    \begin{axis}[
        axis lines=center,
        xlabel={$x$},
        ylabel={$y$},
        xtick=\empty,
        ytick=\empty,
        xmin=-1.5, xmax=2,
        ymin=-2, ymax=2,
        samples=200,
        domain=-1.5:2,
        restrict y to domain=-2:2,
    ]
    
        \addplot [
            darkred,
            thick,
            samples=300,
            domain=-1:1/2,
        ]
        ({\x}, {sqrt((\x - 1/2)*(\x - 1)*(\x + 1))});

        \addplot [
            darkred,
            thick,
            samples=300,
            domain=-1:1/2,
        ]
        ({\x}, {-sqrt((\x - 1/2)*(\x - 1)*(\x + 1))});
        
          \addplot [
            darkred,
            thick,
            samples=200,
            domain=1:2,
        ]
        ({\x}, {sqrt((\x - 1/2)*(\x - 1)*(\x + 1))});

        \addplot [
            darkred,
            thick,
            samples=200,
            domain=1:2,
        ]
        ({\x}, {-sqrt((\x - 1/2)*(\x - 1)*(\x + 1))});

        \addplot[] coordinates {(-0.95, 2) (-0.95,-2)};
        \addplot[] coordinates  {(1.40819, 2) (1.40819,-2)};

        \node[above left] at (axis cs:-1, 0.375999) {$P$};
        \node[fatpoint] (P) at (axis cs:-0.95, 0.375999) {};
        \node[pointlabel] at (P) {1};
        \node[above left] at (axis cs:1.35, 0.944854) {$Q$};
        \node[fatpoint] (Q) at (axis cs:1.40819, 0.944854) {};
        \node[pointlabel] at (Q) {2};
        \addplot[only marks, mark=*, mark size=1.5pt] coordinates {(-0.95, -0.375999) (1.40819, -0.944854) };
        \node[below left] at (axis cs:1.40819, -0.944854) {$-Q$};
        \node[below left] at (axis cs:-0.95, -0.375999) {$-P$};

    \end{axis}
\end{tikzpicture} \hspace{25pt}
\begin{tikzpicture}
    \begin{axis}[
        axis lines=center,
        xlabel={$x$},
        ylabel={$y$},
        xtick=\empty,
        ytick=\empty,
        xmin=-1.5, xmax=2,
        ymin=-2, ymax=2,
        samples=200,
        domain=-1.5:2,
        restrict y to domain=-2:2,
    ]
    
        \addplot [
            darkred,
            thick,
            samples=300,
            domain=-1:1/2,
        ]
        ({\x}, {sqrt((\x - 1/2)*(\x - 1)*(\x + 1))});

        \addplot [
            darkred,
            thick,
            samples=300,
            domain=-1:1/2,
        ]
        ({\x}, {-sqrt((\x - 1/2)*(\x - 1)*(\x + 1))});
        
          \addplot [
            darkred,
            thick,
            samples=200,
            domain=1:2,
        ]
        ({\x}, {sqrt((\x - 1/2)*(\x - 1)*(\x + 1))});

        \addplot [
            darkred,
            thick,
            samples=200,
            domain=1:2,
        ]
        ({\x}, {-sqrt((\x - 1/2)*(\x - 1)*(\x + 1))});

        \addplot[] coordinates {(-0.95, 2) (-0.95,-2)};
        \addplot[] coordinates  {(1.40819, 2) (1.40819,-2)};

        \node[below left] at (axis cs:-1, -0.375999) {$-P$};
        \node[fatpoint] (P) at (axis cs:-0.95, -0.375999) {};
        \node[pointlabel] at (P) {2};
        \node[above left] at (axis cs:1.35, 0.944854) {$Q$};
        \node[fatpoint] (Q) at (axis cs:1.40819, 0.944854) {};
        \node[pointlabel] at (Q) {1};
        \addplot[only marks, mark=*, mark size=1.5pt] coordinates {(-0.95, 0.375999) (1.40819, -0.944854) };
        \node[below left] at (axis cs:1.40819, -0.944854) {$-Q$};
        \node[above left] at (axis cs:-0.95, 0.375999) {$P$};
    \end{axis}
\end{tikzpicture}
\end{center}
\caption{Action of $\tau_4$ on $E\times E$.}
\end{figure}
For constructing the quotient of $E \times E$ by the action of order two, we studied the map that assigns to every ordered pair of points the corresponding ordered pair of vertical lines passing through them. For the quotient by the action of order four, we will instead consider a map $\varsigma$ that assigns to each pair of points the unordered pair of vertical lines passing through them.\\

Let $\tau_2:\mathbb{P}^1\times\mathbb{P}^1\rightarrow\mathbb{P}^1\times\mathbb{P}^1$ be the morphism that swaps the two copies of $\mathbb{P}^1$.
The map $\varsigma$ is the composition of the map $\psi: E \times E\rightarrow\mathbb{P}^1\times\mathbb{P}^1$ that we defined in Subsection \ref{order2_subsection} with the quotient map $\pi_2:\mathbb{P}^1\times\mathbb{P}^1\rightarrow(\mathbb{P}^1\times\mathbb{P}^1)/\langle\tau_2\rangle$. In coordinates, $\varsigma$ is given by
\begin{align*}
\varsigma\colon\quad E \times E &\longrightarrow \mathbb{P}^2 \\
([x_1:y_1:z_1],[x_2:y_2:z_2]) &\longmapsto [x_1x_2 : z_1z_2 : x_1z_2 + x_2z_1]\\[-5pt]
\end{align*}

As with the previous case, it is straightforward to check that this map descends to a morphism on the quotient surface $\Kfour$, and that the induced map is a double cover. Assuming that the characteristic of the base field is not $2$, the ramification locus of this map corresponds to the union of the following curves in $\mathbb{P}^2$:
\begin{enumerate}
    \item The locus of unordered pairs of vertical lines that coincide forms a conic in $\mathbb{P}^2$.
    \item The locus of unordered pairs where one of the lines is tangent to a $2$-torsion point of $E$ are four lines in $\mathbb{P}^2$, each line corresponding to a $2$-torsion point.
\end{enumerate}

These five curves form a highly symmetric configuration in the plane: the four lines all intersect one another, and the conic is tangent to each of the four lines, as illustrated in the diagram below:
\begin{figure}[H]
\begin{center}
\begin{tikzpicture}
\begin{axis}
[
    axis equal,
    axis line style={draw=none}, 
    tick style={draw=none},      
    xtick=\empty,                
    ytick=\empty,                
    xlabel={},                   
    ylabel={}                    
]
        \addplot [
            darkred,
            thick,
            samples=201,
            domain=-1:1,
        ]
        ({\x}, {sqrt(1-\x*\x)});

        \addplot [
            darkred,
            thick,
            samples=201,
            domain=-1:1,
        ]
        ({\x}, {-sqrt(1-\x*\x)});
    
       \addplot[domain=-0.5:2.7] {2.6 - 2.4*x};
       \addplot[domain=-2.7:0.5] {2.6 + 2.4*x};
       \addplot[domain=-3.5:3.5] {-1.25 + 0.75*x};
       \addplot[domain=-3.5:3.5] {-1.25 - 0.75*x};
       \addplot[only marks, mark=*, mark size=1.5pt] coordinates {(0, 2.6) (1.22222, -0.333333) (2.33333, -3) (-2.33333, -3) (-1.22222, -0.333333) (0, -1.25) };
       \addplot[only marks, mark=*, mark size=1.5pt, color=darkred] coordinates {(0.923077, 0.384615) (-0.923077, 0.384615) (0.6, -0.8) (-0.6, -0.8)};
\end{axis}
\end{tikzpicture}
\end{center}
\caption{Ramification locus of the map $\Kfour\rightarrow\mathbb{P}^2$.}
\end{figure}
This construction gives rise to a model of $\Kfour$ as a degree six hypersurface inside $\mathbb{P}(1,1,1,3)$. This surface has four singularities of type $A_3$, which are the images in the quotient of the points of the form $(P, P)$ for some $P \in E[2]$, and six singularities of type $A_1$, which arise from points $(P, Q)$ with $P, Q \in E[2]$ and $P \neq Q$. There are twelve such points, but they are identified $2$-to-$1$ by the quotient.\\

In the description of the surface as a double cover, all singularities correspond to singular points of the ramification locus. Specifically, the $A_3$ singularities are the tangency points of the conic with the lines, while the $A_1$ singularities correspond to the pairwise intersections of the lines.

\begin{remark}
Given any configuration of four lines and a conic in $\mathbb{P}^2$ as described above, it is not difficult to show that, up to isomorphism, one can always construct an elliptic curve $E$ such that the surface $X_6 \subset \mathbb{P}(1,1,1,3)$, defined as the double cover ramified along the configuration, is isomorphic to $\Kfour$.
\end{remark}

To obtain a description that also works in characteristic two, we construct a set of functions that are invariant by the action of $\tau_4$. These define the following map:\\
\begin{adjustbox}{width=\textwidth}
\begin{minipage}{1.1\textwidth}
\begin{align*}
E \times E &\longrightarrow \mathbb{P}(1,1,1,3) \\
(P_1, P_2) &\longmapsto [x_1x_2 : z_1z_2 : x_1z_2 + x_2z_1 : (x_1z_2 - x_2z_1)v - x_2z_1^2z_2(a_1x_1 + a_3z_1)(a_1x_2 + a_3z_2)],\\
\end{align*}
\end{minipage}
\end{adjustbox}
where $v$ is the function defined in equation~\eqref{kum2coord}. The image of this map is a sextic hypersurface, giving a model of $\Kfour$ that is valid in any characteristic.\newpage In particular, in characteristic two, if $E$ is an ordinary elliptic curve, $\Kfour$ is a K3 surface with two singularities of type $E_7^3$, corresponding to the images of the points $(P, P)$ for the two distinct $2$-torsion points $P \in E[2]$, and a singularity of type $D_4^1$ arising from the two points $(P, Q)$ with $P \neq Q$.\\

If $E$ is the unique supersingular elliptic curve in characteristic two, $\Kfour$ is a rational surface. This can be deduced either from the fact that $\Kfour$ is a quotient of $\Ktwo$, which is already rational, or from the fact that $\Kfour$ has an elliptic singularity of type $A_{*,o} + A_{*,o} + A_{*,o} + A_{2,**,o}$, in the notation of Laufer \cite{Laufer1977OnSingularities}.
\begin{figure}[H]
\begin{center}
    \begin{tikzpicture}
	\begin{pgfonlayer}{nodelayer}
		\node [style=black with white font] (1) at (0, 0) {$-3$};
		\node [style=white with black border] (3) at (1, 0) {};
		\node [style=white with black border] (4) at (2, 0) {};
		\node [style=white with black border] (5) at (2.75, 0.75) {};
		\node [style=white with black border] (8) at (0, -1) {};
		\node [style=white with black border] (9) at (0, 1) {};
		\node [style=white with black border] (10) at (-1, 0) {};
		\node [style=white with black border] (11) at (2.75, -0.75) {};
	\end{pgfonlayer}
	\begin{pgfonlayer}{edgelayer}
		\draw [style=new edge style 0] (4) to (3);
		\draw [style=new edge style 0] (4) to (5);
		\draw (4) to (11);
		\draw (9) to (1);
		\draw (10) to (1);
		\draw (3) to (1);
		\draw (8) to (1);
	\end{pgfonlayer}
\end{tikzpicture}
\end{center}
\caption{Resolution graph of the $A_{*,o}+A_{*,o}+A_{*,o}+A_{2,**,o}$ singularity.}
\end{figure}
\vspace{5pt}

\subsection{The action of order six} \label{order6_subsection}
Every elliptic curve has an action of order two given by the involution $\iota$, and composing this with the order three action $\tau_3$ gives rise to an action of order six in $E\times E$ by
\begin{align*}
\tau_6\colon\quad E\times E&\longrightarrow E\times E\\
(P,Q)&\longmapsto (P+Q,-P).\\[-5pt]
\end{align*}
Geometrically, this action can be described through the following diagram:
\begin{figure}[H]
\begin{center}
\begin{tikzpicture}
    \begin{axis}[
        axis lines=center,
        xlabel={$x$},
        ylabel={$y$},
        xtick=\empty,
        ytick=\empty,
        xmin=-1.5, xmax=2,
        ymin=-2, ymax=2,
        samples=200,
        domain=-1.5:2,
        restrict y to domain=-2:2,
    ]
    
        \addplot [
            darkred,
            thick,
            samples=300,
            domain=-1:1/2,
        ]
        ({\x}, {sqrt((\x - 1/2)*(\x - 1)*(\x + 1))});

        \addplot [
            darkred,
            thick,
            samples=300,
            domain=-1:1/2,
        ]
        ({\x}, {-sqrt((\x - 1/2)*(\x - 1)*(\x + 1))});
        
          \addplot [
            darkred,
            thick,
            samples=200,
            domain=1:2,
        ]
        ({\x}, {sqrt((\x - 1/2)*(\x - 1)*(\x + 1))});

        \addplot [
            darkred,
            thick,
            samples=200,
            domain=1:2,
        ]
        ({\x}, {-sqrt((\x - 1/2)*(\x - 1)*(\x + 1))});

        \addplot[domain=-2:2] {0.375999 + 0.241225*(0.95 + x)};
        \addplot[domain=-2:2] {-(0.375999 + 0.241225*(0.95 + x))};

        \addplot[dotted] coordinates {(-0.95, 2) (-0.95,-2)};
        \addplot[dotted] coordinates {(0.1, 2) (0.1,-2)};
        \addplot[dotted] coordinates {(1.40819, 2) (1.40819,-2)};

        \node[above left] at (axis cs:-1, 0.375999) {$P$};
        \node[fatpoint] (P) at (axis cs:-0.95, 0.375999) {};
        \node[pointlabel] at (P) {1};
        \node[above right] at (axis cs:0.15, 0.629285) {$Q$};
        \node[fatpoint] (Q) at (axis cs:0.1, 0.629285) {};
        \node[pointlabel] at (Q) {2};
        
        \addplot[only marks, mark=*, mark size=1.5pt] coordinates {(1.40819, 0.944854) (-0.95, -0.375999) (0.1, -0.629285)(1.40819, -0.944854)};
        \node[above left] at (axis cs:1.40819, 0.944854) {$-P-Q$};
        \node[below left] at (axis cs:-0.95, -0.375999) {$-P$};
        \node[below right] at (axis cs:0.1, -0.64) {$-Q$};
        \node[below left] at (axis cs:1.40819, -0.944854) {$P+Q$};
    \end{axis}
\end{tikzpicture} \hspace{25pt}
\begin{tikzpicture}
    \begin{axis}[
        axis lines=center,
        xlabel={$x$},
        ylabel={$y$},
        xtick=\empty,
        ytick=\empty,
        xmin=-1.5, xmax=2,
        ymin=-2, ymax=2,
        samples=200,
        domain=-1.5:2,
        restrict y to domain=-2:2,
    ]
    
        \addplot [
            darkred,
            thick,
            samples=300,
            domain=-1:1/2,
        ]
        ({\x}, {sqrt((\x - 1/2)*(\x - 1)*(\x + 1))});

        \addplot [
            darkred,
            thick,
            samples=300,
            domain=-1:1/2,
        ]
        ({\x}, {-sqrt((\x - 1/2)*(\x - 1)*(\x + 1))});
        
          \addplot [
            darkred,
            thick,
            samples=200,
            domain=1:2,
        ]
        ({\x}, {sqrt((\x - 1/2)*(\x - 1)*(\x + 1))});

        \addplot [
            darkred,
            thick,
            samples=200,
            domain=1:2,
        ]
        ({\x}, {-sqrt((\x - 1/2)*(\x - 1)*(\x + 1))});

        \addplot[domain=-2:2] {0.375999 + 0.241225*(0.95 + x)};
        \addplot[domain=-2:2] {-(0.375999 + 0.241225*(0.95 + x))};

        \addplot[dotted] coordinates {(-0.95, 2) (-0.95,-2)};
        \addplot[dotted] coordinates {(0.1, 2) (0.1,-2)};
        \addplot[dotted] coordinates {(1.40819, 2) (1.40819,-2)};

        \node[below left] at (axis cs:-1, -0.375999) {$-P$};
        \node[fatpoint] (P) at (axis cs:-0.95, -0.375999) {};
        \node[pointlabel] at (P) {2};
        \node[below left] at (axis cs:1.3, -0.944854) {$P+Q$};
        \node[fatpoint] (Q) at (axis cs:1.40819, -0.944854) {};
        \node[pointlabel] at (Q) {1};
        \addplot[only marks, mark=*, mark size=1.5pt] coordinates {(1.40819, 0.944854) (-0.95, 0.375999) (0.1, -0.629285)(1.40819, 0.944854) (0.1, 0.629285)};
        \node[above left] at (axis cs:1.40819, 0.944854) {$-P-Q$};
        \node[above left] at (axis cs:-0.95, 0.375999) {$P$};
        \node[above right] at (axis cs:0.1, 0.64) {$Q$};
        \node[below right] at (axis cs:0.1, -0.64) {$-Q$};
    \end{axis}
\end{tikzpicture}
\caption{Action of $\tau_6$ on $E\times E$.}
\end{center}
\end{figure}

Let $\Ksix$ denote the quotient of $E\times E$ by this action. In a similar spirit as in the order three case, let $P$ and $Q$ be two different points of $E$. Then, the lines $\ell_{P,Q}$ and $\ell_{-P,-Q}$ can both be understood as points in $\mathbb{P}^{2,\vee}$. Now, consider the quotient of $\mathbb{P}^{2,\vee}$ by the action $\tau_2$ that identifies $\ell_{P,Q}$ and $\ell_{-P,-Q}$. One can check that the invariant functions of the coordinate ring of $\mathbb{P}^2$ are generated by two invariants of degree one and one invariant of degree two. Therefore, $\mathbb{P}^2/\tau_2\cong\mathbb{P}(1,1,2)$ and we can construct a map
\begin{align*}
\psi\colon\quad E\times E&\longrightarrow \mathbb{P}(1,1,2)\\
(P,Q)&\longmapsto [\ell_{P,Q}]\\[-5pt]
\end{align*}
Although the construction of this map may seem somewhat indirect, its purpose is clear. As in previous cases, it descends to the quotient, inducing a morphism 
\begin{align*}
\psi_K\colon\quad\Ksix \rightarrow \mathbb{P}(1,1,2).\\[-5pt]
\end{align*}
Generically, the fibre over a point in $\mathbb{P}(1,1,2)$ consists of twelve pairs of points in $E \times E$ (six on each of the lines $\ell_{P,Q}$ and $\ell_{-P,-Q}$), and since six of these are identified in the quotient, we find that $\psi_K$ is a double cover of $\mathbb{P}(1,1,2)$.\\

Assuming that the characteristic of the base field is not two, the ramification locus of $\psi_K$ consists of two components:
\begin{enumerate}
    \item The image of the dual curve $E^\vee$ by the quotient by $\iota$, which corresponds to the vanishing of a weighted degree six polynomial.
    \item The locus where $P = -Q$, i.e., where the line $\ell_{P,Q}$ is vertical. This corresponds to the vanishing of a weighted degree two polynomial in $\mathbb{P}(1,1,2)$.
\end{enumerate}
Hence, the ramification divisor is cut out by a degree eight polynomial, which factors into components of degrees two and six.\\

As before, in characteristic two we can obtain a model by specialising from characteristic zero. This yields a surface of the form
\[
t^2 + g_4(u,v,w) t + f_8(u,v,w) = 0,
\]
where $g_4$ is a degree four polynomial (arising from the ramification divisor), and $f_8$ has degree eight.\\

If the characteristic of the base field is not two or three, then $\Ksix$ has one $A_5$, four $A_2$ and five $A_1$ singularities. As the origin is fixed by the whole group $C_6$, its image becomes the $A_5$ singularity of $\Ksix$. There are fifteen non-trivial $2$-torsion points in $E\times E$, which under the quotient contract $3$-to-$1$ to give rise to the five $A_1$ in $\Ksix$. There are two types of orbits of this kind:
\begin{itemize}
\item On the one hand, we have the points of the form $(P,Q)$ where $P$ and $Q$ are different $2$-torsion points. In the quotient there are two points of this form, and in our model of $\Ksix$ they correspond to the two $A_1$ ambient singularities that appear as a result of $\Ksix$ being embedded in $\mathbb{P}(1,1,2,4)$. These can be found as the intersection of $\Ksix$ with the variety described by the vanishing of the two variables of degree one.

\item On the other hand, we have the three points of the form $(P,P)$ where $P\in E[2]$. As the tangent line at these is vertical, the image of these points in $\mathbb{P}(1,1,2,4)$ is in the intersection of the factor of degree six and the one of degree two.
\end{itemize}

The eight non-trivial $3$-torsion points fixed by $\tau_6^2$ contract $2$-to-$1$ to give the $A_2$ singularities.\\

If the characteristic of the base field is two and $E$ is ordinary, $\Ksix$ has one $E_6^1$, one $D_4^1$ and four $A_2$ singularities. The $E_6^1$ corresponds to the $C_6$-action at the origin. Using Magma, we can see that the intersection matrix of the exceptional curves of the desingularisation of these singularities form an $E_6$ configuration, and as the Tjurina number is six, we deduce that the singularity is of type $E_6^1$. The $D_4^1$ come from the fact that the three non-trivial $2$-torsion points contract $3$-to-$1$ from $\Ktwo$ to $\Ksix$ and the four $A_2$ because the eight $3$-torsion points contract $2$-to-$1$ from $\Kthree$ to $\Ksix$. \\

If the characteristic of the base field is two and $E$ is supersingular, we know that $(E\times E)/C_6$ is not a K3 surface, as it is a quotient of $(E\times E)/\langle \iota\rangle$, which is rational.\\

If the characteristic of the base field is three and $E$ is ordinary, $\Ksix$ has one $E_7^1$, one $E_6^1$ and five $A_1$ singularities. We check with Magma that the singularity at the origin is $E_7^1$ by resolving it and checking that the Tjurina number is seven, which rules out the possibility of it being of type $E_7^0$.  The $E_6^1$ singularity is the result of the fact that the two $E_6^1$ singularities of $\Kthree$ contract to one. The description of the $A_1$ singularities is identical to the characteristic zero case.\\

If the characteristic of the base field is three and $E$ is supersingular, $(E\times E)/C_6$ still has five $A_1$, but instead of one $E_6^1$ and one $E_7^1$ rational double point, it has an elliptic singularity of type $Tr$ in Laufer's notation \cite{Laufer1977OnSingularities}.\newpage Hence, by Proposition \ref{ellsing_prop}, $(E\times E)/C_6$ is a rational surface.
\begin{figure}[H]
\begin{center}
\begin{tikzpicture}
	\begin{pgfonlayer}{nodelayer}
		\node [style=black with white font] (0) at (2.5, 2) {$-4$};
		\node [style=white with black border] (1) at (3, 1) {};
		\node [style=white with black border] (2) at (2, 1) {};
	\end{pgfonlayer}
	\begin{pgfonlayer}{edgelayer}
		\draw [style=new edge style 0] (0) to (2);
		\draw [style=new edge style 0] (2) to (1);
		\draw (1) to (0);
	\end{pgfonlayer}
\end{tikzpicture}
\caption{Resolution graph of the $Tr$ singularity.}
\end{center}
\end{figure}

One thing that may not be apparent from the dual graph is that the exceptional lines all meet in one point, but this information can be extracted using Magma.

\subsection{Elliptic curves with complex multiplication} \label{examples_ell_subsection}

A major difference between the cyclic cases and the quotients we are about to describe is that, whereas in the previous cases we could construct positive-dimensional families of examples, there are only finitely many quotients by the actions of $Q_8$, $Q_{12}$, and $\SL_2(\FF_3)$. Many of these group actions were first described by Fujiki, who studied them in the context of automorphisms of complex tori \cite{Fujiki1988FiniteTwo}.\\

Another important point is that none of these groups is a subgroup of $M_2(\Z)$, since it is known that the only finite subgroups of $\GL_2(\Z)$ are $C_2$, $C_3$, $C_2^2$, $C_4$, $C_6$, $S_3$, $D_4$, and $D_6$ \cite{Voskresenskii1967OnII}. Therefore, to realise the actions by these groups, $A$ has to be the product of curves with complex multiplication.\\

The simplest examples of elliptic curves with complex multiplication are those with $j$-invariants $0$ and $1728$, which have multiplication by $\zeta_3$ and $i$, respectively. In these two cases, Fujiki showed that $E_{\zeta_3} \times E_{\zeta_3}$ and $E_i \times E_i$ both admit rigid and symplectic actions by all three groups $Q_8$, $Q_{12}$, and $\SL_2(\FF_3)$ \cite[Theorem 4.2]{Fujiki1988FiniteTwo}.\\

However, neither of these curves has good ordinary reduction at the primes $p=2$ or $3$, so they cannot be used to construct examples of the generalised Kummer surfaces $A/Q_{12}$ in characteristic two or $A/\SL_2(\FF_3)$ in characteristic three. For this reason, we now present two less common examples of complex multiplication elliptic curves, found in the LMFDB \cite{TheLMFDBCollaboration2025TheDatabase}.\\

\subsubsection{The \texorpdfstring{curve with complex multiplication by $\sqrt{2}i$}{first curve with complex multiplication}}
Let $E_{\sqrt{2}i}$ denote the unique elliptic curve (up to isomorphism) with complex multiplication by $\sqrt{2}i$. It has $j$-invariant $8000$, and a model over $\Z$ is
\begin{align*}
y^2 = x^3 - x^2 - 3x - 1.\\[-10pt]
\end{align*}
The map corresponding to multiplication by $\sqrt{2}i$ is given by the $2$-isogeny:
\begin{align*}
[\sqrt{2}i]:\quad E_{\sqrt{2}i}&\longrightarrow E_{\sqrt{2}i}\\
(x,y)&\longmapsto\left(-\frac{x^2+1}{2(x+1)},\frac{\sqrt{2}i(x^2+2x-1)y}{4(x+1)^2}\right)\\[-10pt]
\end{align*}
The curve $E_{\sqrt{2}i}$ has bad reduction only at $p=2$, and has good ordinary reduction at $p=3$. The reduction at three defines a curve over $\FF_3$ whose Frobenius endomorphism has characteristic polynomial $x^2 - 2x + 3$, and thus acts as multiplication by $1 + \sqrt{2}i$.\\

\subsubsection{The \texorpdfstring{curve with complex multiplication by $\tfrac{1}{2}(1 + \sqrt{7}i)$}{second curve with complex multiplication}} 
Let $E_\rho$ denote the unique elliptic curve (up to isomorphism) with complex multiplication by $\rho = \tfrac{1}{2}(1 + \sqrt{7}i)$. It has $j$-invariant $-3375$, and a model over $\Z$ is
\begin{align*}
y^2 + xy = x^3 - x^2 - 2x - 1.\\[-5pt]
\end{align*}
\newpage
The map corresponding to multiplication by $\rho$ is given by the $2$-isogeny:\\

\begin{adjustbox}{width=\textwidth}
\begin{minipage}{1.1\textwidth}
\begin{align*}
[\rho]:\quad E_\rho&\longrightarrow E_\rho\\
(x,y)&\longmapsto\left(\frac{(\rho-2)x^2-2\rho}{4x+\rho+2},-\frac{((2\rho+4)x^2+(5\rho+2)x-4\rho+8)y+(3\rho-2)(x^3+x^2-2x-1)}{16 x^2+(8\rho+16)x+2}\right)\\[5pt]
\end{align*}
\end{minipage}
\end{adjustbox}

The curve  $E_\rho$ has bad reduction only at $p=7$, and good ordinary reduction at $p=2$. Its reduction at two defines a curve over $\FF_2$ with characteristic polynomial $x^2 - x + 2$. Therefore, the Frobenius acts on this curve as multiplication by $\tfrac{1}{2}(1+\sqrt{7}i)$.\\

\subsection{The actions by \texorpdfstring{$Q_8$}{Q8}}
Let $A=E_{\sqrt{2}i}\times E_{\sqrt{2}i}$. We will now construct two different actions of $Q_8$ on $A$ such that the quotients by the two actions have different singular points.\\

$Q_8$ can be described by the presentation $Q_8=\langle \sigma_4,\tau_4\,|\,\sigma_4^4,\sigma_4^2\tau^2_4,\sigma_4\tau_4\sigma_4\tau_4^{-1}\rangle$  
and, thus, we can see that $Q_8$ acts on $A$ by considering the following two automorphisms:
\begin{align*}
\sigma_4\colon\quad E_{\sqrt{2}i}\times E_{\sqrt{2}i}&\longrightarrow E_{\sqrt{2}i}\times E_{\sqrt{2}i} &\tau_4\colon\quad E_{\sqrt{2}i}\times E_{\sqrt{2}i}&\longrightarrow E_{\sqrt{2}i}\times E_{\sqrt{2}i}  \\
(P,Q)&\longmapsto ([\sqrt{2} i]P-Q,-P-[\sqrt{2} i]Q) &(P,Q)&\longmapsto (-Q,P)&\\[-5pt]
\end{align*}

The quotient by this action $(E_{\sqrt{2}i}\times E_{\sqrt{2}i})/Q_8$ has two $D_4$, three $A_3$ and two $A_1$ singularities.\\

We saw in Subsection \ref{order4_subsection} that the fixed points of $\tau_4$ are the subgroup of points of the form $(P,P)$ with $P\in E_{\sqrt{2}i}[2]$, i.e. $P\in \{O,(-1,0),(1-\sqrt{2},0),(1+\sqrt{2},0)\}$. It takes a bit more work to check that the fixed points of $\sigma_4$ are 
$$\{O,((-1,0),(-1,0)),((1-\sqrt{2},0),(1+\sqrt{2},0)),((1+\sqrt{2},0),(1-\sqrt{2},0))\}.$$

Therefore, two points are fixed by $Q_8$: $O$ and $((-1,0),(-1,0))$. The images of these two points by the quotient map are the two $D_4$ singularities.\\

One can check that there are three subgroups of $Q_8$ isomorphic to $C_4$: $\langle\sigma_4\rangle$, $\langle\tau_4\rangle$ and $\langle \sigma_4\tau_4 \rangle$. One can check that each of these groups fix $O$ and $((-1,0),(-1,0))$ and two other different $2$-torsion points. So, in total, there are six $2$-torsion points that are fixed by a unique copy of $C_4$, and through the quotient map, they map $2$-to-$1$ to the three $A_3$ singularities. The eight remaining $2$-torsion points of $E_{\sqrt{2}i}\times E_{\sqrt{2}i}$ that are not fixed by any copy of $C_4$ map $4$-to-$1$ to the $A_1$ singularities.\\

A different action of $Q_8$ on $A$ is the one generated by the following two automorphisms:
\begin{align*}
\varsigma_4\colon\quad E_{\sqrt{2}i}\times E_{\sqrt{2}i}&\longrightarrow E_{\sqrt{2}i}\times E_{\sqrt{2}i} \\
(P,Q)&\longmapsto (P+[{\sqrt{2}i}]Q,[{\sqrt{2}i}]P-Q)\\[10pt]
\upsilon_4\colon\quad E_{\sqrt{2}i}\times E_{\sqrt{2}i}&\longrightarrow E_{\sqrt{2}i}\times E_{\sqrt{2}i}  \\
(P,Q)&\longmapsto ((-1+[{\sqrt{2}i}])P-2Q,-[{\sqrt{2}i}]P+(1-[{\sqrt{2}i}])Q)\\[-5pt]
\end{align*}

The quotient by this action $(E_{\sqrt{2}i}\times E_{\sqrt{2}i})/Q_8$ has four $D_4$ and three $A_1$ singularities.\\

The fixed points of $\varsigma_4$ are those of the form $(P,Q)\in(E_{\sqrt{2}i}\times E_{\sqrt{2}i})[2]$, where both $P$ and $Q$ are in the kernel of $[\sqrt{2} i]$, so $P,Q\in \{O,(-1,0)\}$. It can be shown that these same points fixed by $\upsilon_4$, and therefore, by the whole $Q_8$. The image of these points by the quotient map would be the four $D_4$ singularities.\\

The other twelve $2$-torsion points are not fixed by any copy of $C_4<Q_8$ and, therefore, they map $4$-to-$1$ to the three $A_1$ singularities in the quotient.\\

\subsection{The action by \texorpdfstring{$Q_{12}$}{Q12}}
Let $A=E_\rho\times E_\rho$, and consider the automorphisms
\begin{align*}
\tau_4\colon\quad E_\rho\times E_\rho&\longrightarrow E_\rho\times E_\rho & \tau_6\colon\quad E_\rho\times E_\rho&\longrightarrow E_\rho\times E_\rho \\
(P,Q)&\longmapsto (P+[\rho]Q,([\rho]-1)P-Q) &(P,Q)&\longmapsto (P+Q,-P)\\[-5pt]
\end{align*}
They satisfy that $\tau_6^3\tau_4^2=\tau_4^4=\tau_6\tau_4\tau_6\tau_4^{-1}=\mathrm{id}$ and therefore, they generate $Q_{12}$.\\

In characteristics not two or three, 
$(E_\rho\times E_\rho)/Q_{12}$ has one $D_5$, three $A_3$, two $A_2$ and one $A_1$ singularity. \\

 It is easy to see that the image under the quotient map of $O$ is the $D_5$ singularity, as locally this map is the quotient of $\mathbb{A}^2$ by the action of $Q_{12}$. One can check that the points fixed by $\tau_4$ are those of the form $(P,Q)$ where $Q\in E_\rho$ is in the kernel of $[\rho]$, and $P\in E_\rho$ is in the kernel of $([\rho]-1)$. Note that $([\rho]-1)$ corresponds to multiplication by $\tfrac{1}{2}(-1+\sqrt{7}i)$, which is also a $2$-isogeny, so
\begin{align*}
\langle P\rangle&=\{O, \left(\tfrac{1}{8}(-5+\sqrt{7}i),\tfrac{1}{16}(5-\sqrt{7}i) \right)\}, & \langle Q\rangle=\{O,\left(\tfrac{1}{8}(-5-\sqrt{7}i),\tfrac{1}{16}(5+\sqrt{7}i)\right) \},
\end{align*}
 are both copies of $\Z/2\Z$ inside $E_\rho[2]$.\\
 
 Therefore, the points fixed by $\tau_4$ are a subgroup $(\Z/2\Z)^2<(E_\rho\times E_\rho)[2]$. Now, there are two other subgroups of order four inside $Q_{12}$: $\langle\tau_4\tau_6^2\rangle$ and $\langle\tau_4\tau_6^4\rangle$, and each of these groups fixes a different $(\Z/2\Z)^2< (E_\rho\times E_\rho)[2]$. Hence, in total, there are nine non-trivial $2$-torsion points fixed by a copy of $C_4$ inside $Q_{12}$. They are identified $3$-to-$1$ in the quotient and their images are the three $A_3$ singularities. There are six other $2$-torsion points, which are permuted by the actions of $\tau_4$ and $\tau_6^2$, so under the quotient map, they map $6$-to-$1$ to the $A_1$ singularity.\\

 As for the two $A_2$ singularities, as we discussed in Subsection \ref{order6_subsection}, there is a subgroup of order nine of $(E_\rho\times E_\rho)[3]$ fixed by $\tau_6^2$, and the action of $\tau_4$ on the eight non-trivial gives rise to two orbits, whose images under the quotient map are the two $A_2$ singularities.\\

If the characteristic is two, the picture is quite similar. In that case, $(E_\rho\times E_\rho)/Q_{12}$ has one $E_8^4$, one $E_7^3$, and two $A_2$ singularities.\\

The image under the quotient map of $O$ can be checked to be an $E_8^4$ singularity. This can be seen from the fact that $C_6$ is a normal subgroup of $Q_{12}$, and therefore there is a quotient map \[\pi:(E_\rho\times E_\rho)/C_6\rightarrow(E_\rho\times E_\rho)/Q_{12}.\] At the image of the origin, $(E_\rho\times E_\rho)/C_6$ has a singularity of type $E_6^1$, and $\pi$ is an unramified double cover, so by the work of Artin, we deduce that the corresponding singularity of $(E_\rho\times E_\rho)/Q_{12}$ is of type $E_8^4$ \cite[Case 4]{Artin1975CoveringsP}.\\

The $2$-torsion of $E_\rho$ is the kernel of Frobenius, and therefore, from the discussion in Subsection \ref{examples_ell_subsection}, the four $2$-torsion points of $(E_\rho\times E_\rho)[2]$ are the fixed points of $\tau_4$. Furthermore, these points are also fixed by $\tau_4\tau_6^2$ and $\tau_4\tau_6^4$, so they are the fixed points of all copies of $C_4<Q_{12}$. The other three non-trivial $2$-torsion points are identified $3$-to-$1$ in the quotient, yielding an $E_7^3$ singularity. The fact that this is the right type of singularity can be deduced from the analysis of the quotients by the group $C_4$ in characteristic two that we made in Subsection \ref{order4_subsection}. Finally, the two $A_2$ singularities come from the image of $3$-torsion points, in the same way as in the characteristic $p\neq 2$ case.\\

\subsection{The action by \texorpdfstring{$\SL_2(\FF_3)$}{SL(2,3)}}
Let $A=E_{\sqrt{2}i}\times E_{\sqrt{2}i}$, and consider the automorphisms
\begin{align*}
\tau_3\colon\quad E_{\sqrt{2}i}\times E_{\sqrt{2}i}&\longrightarrow E_{\sqrt{2}i}\times E_{\sqrt{2}i} &\varsigma_4\colon\quad E_{\sqrt{2}i}\times E_{\sqrt{2}i}&\longrightarrow E_{\sqrt{2}i}\times E_{\sqrt{2}i}  \\
(P,Q)&\longmapsto (-P-Q,P) &(P,Q)&\longmapsto (P+[{\sqrt{2}i}]Q,[{\sqrt{2}i}]P-Q)\\[-5pt]
\end{align*}
Using Magma, one can check that these two automorphisms generate $\SL_2(\FF_3)$.\newpage One can also check that the automorphism $\upsilon_4$ that we described in Subsection \ref{examples_ell_subsection} is $\upsilon_4=\tau_3\varsigma_4\tau_3^2$ and $Q_8\cong\langle\varsigma_4,\upsilon_4 \rangle$ is a normal subgroup of $\SL_2(\FF_3)$.\\

In characteristics not two or three, 
$(E_{\sqrt{2}i}\times E_{\sqrt{2}i})/\SL_2(\FF_3)$ has one $E_6$, one $D_4$, four $A_2$ and one $A_1$ singularity. \\

Recall that the elements fixed by $Q_8$ were those  inside $(E_{\sqrt{2}i}\times E_{\sqrt{2}i})[2]$ of the form $(P,Q)$ where $P, Q\in \{O,(-1,0)\}$. Then, the image under the quotient map of $O$ is the $E_6$ singularity, and the other three points fixed by $Q_8$ map $3$-to-$1$ to the $D_4$ singularity. The remaining twelve $2$-torsion points are mapped $12$-to-$1$ to the $A_1$ singularity.\\

Inside $\SL_2(\FF_3)$, there are four subgroups of order three: $\langle \tau_3\rangle$, $\langle\varsigma_4\tau_3\rangle$, $\langle\tau_3\varsigma_4\rangle$, and $\langle \varsigma_4\tau_3\varsigma_4^3\rangle$.
 For each of these groups, the set of fixed points is a subgroup of order nine of $(E_{\sqrt{2}i}\times E_{\sqrt{2}i})[3]$. The action of $Q_8=\langle\varsigma_4,\upsilon_4\rangle$ on the thirty-two non-trivial elements gives rise to four orbits, whose images under the quotient map are the four $A_2$ singularities.\\
 
In characteristic three, 
$(E_{\sqrt{2}i}\times E_{\sqrt{2}i})/\SL_2(\FF_3)$ has one $E_8^2$, one $E_6^1$, one $D_4$ and one $A_1$ singularity. \\

The image under the quotient map of $O$ can be checked to be an $E_8^2$ singularity, by studying the quotient map $\pi:(E_{\sqrt{2}i}\times E_{\sqrt{2}i})/Q_8\rightarrow(E_{\sqrt{2}i}\times E_{\sqrt{2}i})/\SL_2(\FF_3)$. At the image of the origin, $(E_{\sqrt{2}i}\times E_{\sqrt{2}i})/Q_8$ has a singularity of type $D_4$, and $\pi$ is an unramified triple cover, so by the work of Artin, we deduce that the corresponding singularity of $(E_{\sqrt{2}i}\times E_{\sqrt{2}i})/\SL_2(\FF_3)$ is of type $E_8^2$ \cite[Section 5]{Artin1975CoveringsP}.\\

As before, we have a subgroup of order four of the $2$-torsion that is fixed by $Q_8$. The three non-trivial points again map $3$-to-$1$ to the $D_4$ singularity and the twelve other $2$-torsion points map $12$-to-$1$ to the $A_1$ singularity.\\

Finally, there are eight non-trivial $3$-torsion points that are fixed by the order three subgroups of $\SL_2(\FF_3)$. They get mapped $8$-to-$1$ in the quotient, producing an $E_6^1$ singularity, as discussed in Subsection \ref{order3_subsection}.

\bibliographystyle{alpha}
\bibliography{references.bib}

\end{document}